\documentclass[a4paper,11pt]{amsart}

\usepackage[top=30truemm,bottom=25truemm,left=35truemm,right=35truemm]{geometry}

\usepackage{amsmath}
\usepackage{amssymb}
\usepackage{amsthm}
\usepackage{graphics}
\usepackage[abbrev,alphabetic]{amsrefs}
\usepackage{amscd}
\usepackage[dvipdfmx]{graphicx}
\usepackage{color}
\usepackage{ulem}
\usepackage{enumitem}

\usepackage[all,cmtip]{xy}
\usepackage{xypic}

\title[A characterization of virtually free actions via arc spaces]
{A characterization of virtually free actions via arc spaces and its application to the lower semi-continuity conjecture}

\author{Yusuke Nakamura}
\address{Graduate School of Mathematics, Nagoya University, Furo-cho, Chikusa-ku, Nagoya, 464-8602, Japan.}
\email{y.nakamura@math.nagoya-u.ac.jp}
\urladdr{https://sites.google.com/site/ynakamuraagmath/}

\author{Kohsuke Shibata}
\address{School of Engineering, Tokyo Denki University, Adachi-ku, Tokyo 120-8551, Japan.}
\email{shibata.kohsuke@mail.dendai.ac.jp}

\subjclass[2020]{Primary 14E18; Secondary 14E30, 14B05}

\keywords{minimal log discrepancy, arc space, hyperquotient singularity, LSC conjecture, PIA conjecture}

\newtheorem{thm}{Theorem}[section]
\newtheorem{lem}[thm]{Lemma}

\newtheorem{cor}[thm]{Corollary}
\newtheorem{prop}[thm]{Proposition}

\newtheorem{claim}[thm]{Claim}

\theoremstyle{definition}
\newtheorem{defi}[thm]{Definition}
\newtheorem{eg}[thm]{Example}
\newtheorem{conj}[thm]{Conjecture}

\theoremstyle{remark}
\newtheorem{rmk}[thm]{Remark}

\newtheorem*{ackn}{Acknowledgements}
\newtheorem{setup}[thm]{Setup}

\begin{document}

\maketitle

\begin{abstract}
In this paper, we study the precise inversion of adjunction (PIA) conjecture and the lower semi-continuity (LSC) conjecture for hyperquotient singularities. 
Previously known results for these conjectures in this setting required the singularity to be klt, and without this assumption, a counterexample to the PIA conjecture is known to exist.

To resolve this obstacle, we introduce a localized notion of virtually free actions and characterize it via the arc spaces of quotient varieties. 
Utilizing this characterization, we establish a necessary and sufficient condition for the PIA conjecture to hold for arbitrary hyperquotient singularities, thereby clarifying the mechanism of the counterexample. 
Furthermore, as an application of this insight, we unconditionally establish the LSC conjecture for arbitrary hyperquotient singularities.
\end{abstract}

\section{Introduction}

The minimal log discrepancy is a fundamental invariant of singularities in birational geometry. 
In \cite{Amb99}, Ambro proposed the lower semi-continuity (LSC) conjecture for minimal log discrepancies. 
As shown by Shokurov \cite{Sho04}, the LSC conjecture, together with the ascending chain condition (ACC) conjecture, implies the termination of flips. 
In this paper, we focus on the LSC conjecture and the related precise inversion of adjunction (PIA) conjecture. 
Throughout this paper, we work over an algebraically closed field $k$ of characteristic zero.

The LSC conjecture asserts the lower semi-continuity of the minimal log discrepancy as the closed point varies:

\begin{conj}[LSC conjecture, \cite{Amb99}]\label{conj:LSC}
Let $(X, \mathfrak{a})$ be a log pair, and let $|X|_{\rm cl}$ denote the set of all closed points of $X$ with the Zariski topology. 
Then the function 
\[
|X|_{\rm cl} \to \mathbb{R}_{\ge 0} \cup \{ - \infty \}; \quad x \mapsto \operatorname{mld}_x (X,\mathfrak{a})
\]
is lower semi-continuous. 
\end{conj}

The LSC conjecture has been confirmed in the following settings:
\begin{enumerate}
\item[(\ref{conj:LSC}.1)]
Ambro \cite{Amb99} proved the conjecture for $\dim X = 3$. 

\item[(\ref{conj:LSC}.2)]
Ein, Musta\c{t}\u{a}, and Yasuda \cite{EMY03} proved it when $X$ is smooth. 

\item[(\ref{conj:LSC}.3)]
Ein and Musta\c{t}\u{a} \cite{EM04} generalized the result (\ref{conj:LSC}.2) to normal local complete intersection varieties. 

\item[(\ref{conj:LSC}.4)]
The first author \cite{Nak16} proved the conjecture when $X$ has quotient singularities. 

\item[(\ref{conj:LSC}.5)]
In \cite{NS22, NS25}, the authors generalized the results (\ref{conj:LSC}.3) and (\ref{conj:LSC}.4) to hyperquotient klt singularities. 
More generally, they proved the conjecture for a finite group quotient of a complete intersection variety defined by invariant equations, provided that the variety $X$ is klt 
(the case of semi-invariant equations is treated in \cite{NS26}). 
\end{enumerate}

The following PIA conjecture is useful for replacing the variety under consideration with a better ambient space when studying minimal log discrepancies.

\begin{conj}[PIA conjecture, {\cite{92}*{17.3.1}}]\label{conj:PIA}
Let $(X, \mathfrak{a})$ be a log pair, and let $D$ be a normal prime Cartier divisor on $X$. 
Let $x \in D$ be a closed point. Suppose that $D$ is not contained in the cosupport of the $\mathbb{R}$-ideal sheaf $\mathfrak{a}$. 
Then, we have
\[
\operatorname{mld}_x \bigl( X, \mathfrak{a} \mathcal{O}_X(-D) \bigr) = \operatorname{mld}_x (D, \mathfrak{a} \mathcal{O}_D). 
\]
\end{conj}

The PIA conjecture has been confirmed in the following settings:
\begin{enumerate}
\item[(\ref{conj:PIA}.1)]
The conjecture is known to hold for $\dim X = 3$ (cf.\ \cite{NS4}*{Proposition 6.1}).

\item[(\ref{conj:PIA}.2)]
Ein, Musta\c{t}\u{a}, and Yasuda \cite{EMY03} proved the conjecture when $X$ is smooth. 

\item[(\ref{conj:PIA}.3)]
Ein and Musta\c{t}\u{a} \cite{EM04} generalized the result (\ref{conj:PIA}.2) to normal local complete intersection varieties. 

\item[(\ref{conj:PIA}.4)]
In \cite{NS22, NS25}, the authors proved the conjecture when $X$ has quotient singularities and $D$ is klt at $x$. 
More generally, they established it for $X$ a finite group quotient of a complete intersection variety defined by invariant equations, provided that both $X$ and $D$ are klt at $x$ (the case of semi-invariant equations is treated in \cite{NS26}). 
\end{enumerate}

\noindent
Despite these affirmative results, the conjecture is not true in full generality: 

\begin{enumerate}
\item[(\ref{conj:PIA}.5)]
In \cite{NS4}, the authors provided a counterexample $(X, D)$ to the conjecture, where $X$ is a $5$-dimensional variety with quotient singularities and $D$ is log canonical but not klt. 
\end{enumerate}

\noindent
This counterexample (\ref{conj:PIA}.5) demonstrates that the klt assumption in the result (\ref{conj:PIA}.4) is indeed indispensable. 

The LSC conjecture and the PIA conjecture are intertwined as follows. 
In general, assuming the PIA conjecture for $X$ and $D$, the LSC conjecture for an ambient space $X$ implies the LSC conjecture for $D$. 
Indeed, the result (\ref{conj:LSC}.3) in \cite{EM04} for the LSC conjecture was proved by reducing it to the smooth case (\ref{conj:LSC}.2) via the result (\ref{conj:PIA}.3) for the PIA conjecture. 
Similarly, the result (\ref{conj:LSC}.5) in \cite{NS22, NS25, NS26} was established by reducing it to the case of quotient singularities (\ref{conj:LSC}.4) using the result (\ref{conj:PIA}.4). 
The klt assumption in the result (\ref{conj:LSC}.5) was indispensable because it was required to enable this reduction.

However, as demonstrated by the counterexample (\ref{conj:PIA}.5), the PIA conjecture fails for hyperquotient singularities that are not klt, meaning we can no longer rely on this naive reduction strategy. 
Furthermore, the authors have shown in \cite{NS4}*{Example 5.1} that there also exist counterexamples to the LSC conjecture for families in the setting of hyperquotient singularities without the klt assumption. 
Consequently, it is highly natural to suspect that non-klt hyperquotient singularities could serve as counterexamples to the original LSC conjecture (Conjecture \ref{conj:LSC}) as well. 

The primary objective of this paper is to uncover the mechanism behind the counterexample to the PIA conjecture for hyperquotient singularities, thereby clarifying the fundamental role of the klt condition. 
Furthermore, contrary to the pessimistic expectation mentioned above, we utilize this insight to unconditionally prove the LSC conjecture for hyperquotient singularities, completely removing the previously required klt assumption from the result (\ref{conj:LSC}.5).

\begin{thm}[$=$ Theorem \ref{thm:LSC}]\label{thm:LSC_intro}
Suppose that a finite subgroup $G$ acts on a smooth variety $A$. 
Let $A' := A / G$ be the quotient variety. 
Let $X$ be a normal subvariety of $A'$ of codimension $c$ 
such that $X$ is locally defined by $c$ equations in $A'$. 
Then, the LSC conjecture (Conjecture \ref{conj:LSC}) is true for $X$.
\end{thm}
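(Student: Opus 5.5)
We plan to reduce the LSC conjecture for $X$ to a PIA-type comparison that holds \emph{up to a controlled error}, with the error itself lower semi-continuous or vanishing on a dense open set. The point is that we cannot simply use PIA, which fails by (\ref{conj:PIA}.5). The basic framework is that of \cite{NS22, NS25}. We work locally near a closed point $x \in X$. We may shrink $A$ and replace $G$ by a stabilizer so that $G$ fixes a point of $A$ over $x$. By Chevalley--Shephard--Todd type reductions we may also assume $G$ contains no pseudo-reflections. After this, $X$ is cut out in $A'$ by $c$ equations $f_1,\dots,f_c$. Pulling back, the preimage $\tilde X \subset A$ is defined by $G$-semi-invariant (after adjusting, invariant) functions, and $\tilde X \to X$ is a finite quotient.

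Step 1 is to describe $\operatorname{mld}_x(X,\mathfrak a)$ via arc spaces. Using the twisted arc spaces $J_\infty^{(\gamma)} A$ for $\gamma \in \operatorname{Conj}(G)$, as in the quotient singularity case (\ref{conj:LSC}.4), we write the mld as a minimum over the twisting $\gamma$ and the contact orders. The terms are codimensions of cylinders in the $\gamma$-twisted arcs of $A$ lying on $\tilde X$, corrected by the age of $\gamma$ and by the order of the Jacobian/Nash ideal of $X$. Next we apply the paper's localized notion of virtually free action and its arc-space characterization. This tells us exactly which twisted sectors $\gamma$ contribute arcs whose generic point maps into $X$ with the expected codimension $c$, namely the sectors where the action on $\tilde X$ along those arcs is virtually free. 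Only the sectors where virtual freeness fails produce the discrepancy responsible for the PIA counterexample.

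Step 2 is to split the formula for $\operatorname{mld}_x$ into two parts. The first is the contribution of the virtually free sectors, where the codimension-$c$ cutting works exactly as in the lci case of \cite{EM04}, so PIA holds sector by sector. The second is the contribution of the remaining sectors, which we must show is never smaller than the first in a way that could break semicontinuity. Alternatively, we show that this second contribution is lower semi-continuous by itself, since it is governed by arcs contained in the fixed loci $A^{\gamma}$. Each piece is then the minimum of finitely many functions of the form $x \mapsto \operatorname{codim}(\text{cylinder in } J_\infty^{(\gamma)}A \text{ over } x)$ minus a constant. We prove lower semi-continuity of such functions exactly as in \cite{EMY03, Nak16}, using upper semi-continuity of fiber dimensions of the truncations $\pi_m$ for the families of jet schemes over $A'$. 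Because $G$, and hence the set of sectors, is finite and constant after shrinking, the minimum of finitely many lower semi-continuous functions is lower semi-continuous.

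The main obstacle is Step 2. Across nearby points, the stabilizer group changes, and the decomposition into virtually free and non-virtually-free sectors must vary compatibly. First, we will try to use that the twisted arc spaces at $x' \neq x$ near $x$ are obtained from those at $x$ by restricting to subgroups, and that virtual freeness is an open condition along the family. If that fails, we fall back on a uniform bound. We would show that non-virtually-free sectors can only lower the mld at $x$ (a specialization point), never at generic nearby points, which is precisely the direction compatible with lower semi-continuity.
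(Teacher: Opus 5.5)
You have the roles of the two kinds of sectors reversed. This is a genuine gap, not a wording slip.

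By Theorem \ref{thm:vf_thin_local}, the $\gamma$-action on $\tilde X$ is virtually free at the points over $x$ exactly when the set of $\gamma$-sector arcs of $X$ centered at $x$ is thin. By Remark \ref{rmk:thin_cylinder_kt}(1), that set then lies entirely in the arc space of the relative Jacobian locus. This is precisely where the codimension-$c$ cutting of \cite{EM04} has nothing to work with, not where it ``works exactly as in the lci case.'' Such sectors can be discarded when computing $\operatorname{mld}_x(X,\mathfrak a)$ (Proposition \ref{prop:eq_local}). They may still contribute to $\operatorname{mld}_x\bigl(A',(f_1\cdots f_c)\mathfrak b\bigr)$, because on the smooth $A$ every element fixing a point is non-virtually-free (Theorem \ref{thm:klt_local}).

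This is exactly the mechanism of Example \ref{eg:counter_ex}. In its notation, the generator is virtually free on the hypersurface but not on $\mathbb A^5$, and its sector brings $\operatorname{mld}_{x'}(A',X')$ down to $5/3$, while $\operatorname{mld}_{x'}(X')=2$. So PIA does \emph{not} hold sector by sector on the virtually free sectors. It holds on the non-virtually-free ones (Theorem \ref{thm:PIA_nonlin}). This is consistent with the klt case, where every element fixing the point is non-virtually-free (Remark \ref{rmk:NVF}) and PIA is known. Also, twisted arcs are not ``contained in the fixed loci $A^\gamma$''; only their centers are $\gamma$-fixed.

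The missing ingredient is the proof of PIA for $\langle\gamma\rangle$ when every element of $\langle\gamma\rangle$ is non-virtually-free at $\tilde x$. One must show that every irreducible component of the contact-locus cylinders $h_\infty^{-1}(C'_{w,v,\gamma})$ (notation of the proof of Theorem \ref{thm:PIA}) meets $\tilde X^{[\gamma]}_\infty$ in a non-thin set. The paper does this in three steps:
\begin{itemize}
\item each component is stable under the $k^*$-action of Definition \ref{defi:k_action};
\item hence each component contains the trivial arc (Lemma \ref{lem:k_action});
\item Theorem \ref{thm:5.2} then lifts the trivial arc to a $\gamma$-fixed point on a resolution and invokes Lemma \ref{lem:2.27'}.
\end{itemize}
Write $S_{\tilde x}$ for the set of non-virtually-free elements at a point $\tilde x\in\tilde X$ over $x$, and $x_\gamma$ for the image of $\tilde x$ in $A/\langle\gamma\rangle$. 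Proposition \ref{prop:eq_local} and Theorem \ref{thm:PIA_nonlin} then give
\[
\operatorname{mld}_{x}(X,\mathfrak a)=\min_{\gamma\in S_{\tilde x}}\operatorname{mld}_{x_\gamma}\bigl(A/\langle\gamma\rangle,(f_1\cdots f_c)\,\mathfrak b\,\mathcal O_{A/\langle\gamma\rangle}\bigr).
\]
Lower semi-continuity follows by setting the $\gamma$-term to $\infty$ off the image of the closed non-virtually-free locus $Z_\gamma$ (Lemma \ref{lem:vf_properties}(2)), and applying \cite{Nak16} to $A/\langle\gamma\rangle$ as a black box.

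Your closing intuition is exactly what handles the varying index set: extra sectors appear only on closed sets, so they only lower the value at special points. But it only helps once the contributing sectors are correctly identified as the non-virtually-free ones. Your plan to prove semi-continuity of ``codimension of cylinders in $J^{(\gamma)}_\infty A$ over $x$'' directly is unsupported as stated. Those twisted arc spaces are built relative to a $\gamma$-fixed base point and do not form an evident family as $x$ moves; the paper avoids this by citing \cite{Nak16}.
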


To establish Theorem \ref{thm:LSC_intro} without relying on the reduction via the PIA conjecture, a new approach is required. Specifically, we achieve this by characterizing virtually free actions in terms of their arc spaces. 

The concept of virtually free actions was recently introduced by the authors in \cite{NS4} to construct the counterexample to the PIA conjecture. 
Let $G$ be a finite group acting on a normal variety $X$. 
The $G$-action on $X$ is said to be \textit{virtually free} if, for every $G$-equivariant proper birational morphism $Y \to X$ from a normal variety $Y$, the $G$-action on $Y$ is free in codimension one (see Definition \ref{defi:vf}). 
As noted in Remark \ref{rmk:vf}, this concept is an intermediate notion between a free action and an action that is free in codimension one. 
Furthermore, when $X$ is klt, the property of being virtually free is equivalent to being free (see Theorem \ref{thm:kltcase}).

In this paper, we also introduce a localized version of this notion.
For a closed point $x \in X$ and an element $\gamma \in G$, we say that the $\gamma$-action on $X$ is \textit{virtually free at $x$} if, for every $\gamma$-equivariant proper birational morphism $f \colon Y \to X$ from a normal variety $Y$, the $\gamma$-fixed locus on $Y$ intersects the fiber $f^{-1}(x)$ in a subset of codimension at least two in $Y$ (see Definition \ref{defi:vf_local}).
Let $\operatorname{NVF}(X,x)$ denote the set of elements $\gamma \in G$ such that the $\gamma$-action on $X$ is not virtually free at $x$.

A key consequence of this localized notion is that the elements $\gamma \in G$ whose action is virtually free at $x$ do not contribute to the drop in the minimal log discrepancy. 

\begin{prop}[$=$ Proposition \ref{prop:eq_local}]\label{prop:eq_local_intro}
Suppose that a finite group $G$ acts on a normal $\mathbb{Q}$-Gorenstein variety $X$, and that the action is free in codimension one.
Let $\mathfrak{a}$ be a non-zero $\mathbb{R}$-ideal sheaf on $X/G$. 
Let $x \in X$ be a closed point, and let $x' \in X/G$ be its image. 
Then, we have 
\[
\operatorname{mld}_{x'}(X/G, \mathfrak{a}) = 
\min _{\gamma \in \operatorname{NVF}(X,x)}
\operatorname{mld}_{x' _{\gamma}} \bigl( X/\langle \gamma \rangle , \mathfrak{a} \mathcal{O}_{X/\langle \gamma \rangle} \bigr), 
\]
where $x'_{\gamma}$ denotes the image of $x$ in $X/\langle \gamma \rangle$. 
\end{prop}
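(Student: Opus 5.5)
Since the $G$-action on $X$ is free in codimension one, the quotient $\pi\colon X\to X/G$ is étale in codimension one and $K_X=\pi^*K_{X/G}$. The plan is to express $\operatorname{mld}_{x'}(X/G,\mathfrak a)$ as a minimum over divisors over $X/G$ with center $x'$, and to sort these divisors by the inertia of the divisors lying over them on $X$. Concretely, let $E$ be a prime divisor over $X/G$ with center $x'$. Choose a $G$-equivariant proper birational model $f\colon Y\to X$ from a normal variety $Y$ (for instance, the normalization of $X$ in the function field of a model of $X/G$ on which $E$ appears) such that some prime divisor $F\subset Y$ lies over $E$. Then $f(F)$ is a point in the $G$-orbit of $x$, and after translating by $G$ we may assume $f(F)=x$. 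Let $H\subset G$ be the inertia group of $F$; it is cyclic in characteristic zero, say $H=\langle\gamma\rangle$. The Hurwitz formula gives $a_F(X,\pi^*\mathfrak a)=|H|\,a_E(X/G,\mathfrak a)-(|H|-1)$, so the log discrepancy of $E$ is controlled by $|H|$. We then factor $X\to X/\langle\gamma\rangle\to X/G$. The divisor $F$ maps to a divisor $E_\gamma$ over $X/\langle\gamma\rangle$ with center $x'_\gamma$. Because $\langle\gamma\rangle$ is the full inertia of $F$, the map $X/\langle\gamma\rangle\to X/G$ is unramified at the generic point of $E_\gamma$. It follows that $a_{E_\gamma}(X/\langle\gamma\rangle,\mathfrak a\mathcal O)=a_E(X/G,\mathfrak a)$.

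This gives the inequality $\ge$, provided we know $\gamma\in\operatorname{NVF}(X,x)$ whenever $\gamma\ne 1$. That membership holds by construction: $F$ is a divisor on $Y$ inside $f^{-1}(x)$ and is fixed by $\gamma$. For $\gamma=1$, the element $1$ lies in $\operatorname{NVF}(X,x)$ because the identity fixes everything, and the corresponding term is $\operatorname{mld}_x(X,\pi^*\mathfrak a)$. For the reverse inequality $\le$, we use the fact that $X/\langle\gamma\rangle\to X/G$ is finite and étale in codimension one. Pulling back along it can only increase mld in the relevant sense: a divisor $E_\gamma$ over $x'_\gamma$ maps to a divisor $E$ over $x'$ with ramification index $r\ge1$, and $a_{E_\gamma}=r\,a_E-(r-1)\ge a_E$ when $a_E\le1$. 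When $a_E>1$ the inequality can fail, and I expect to need care in the degenerate cases where the mld is $-\infty$ or large. The standard comparison of mld under finite morphisms that are étale in codimension one handles these cases, and it needs no virtual-freeness.

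The main obstacle is making the reduction to a single cyclic inertia group rigorous. We must check that every divisor computing or approaching $\operatorname{mld}_{x'}(X/G,\mathfrak a)$ arises on some $G$-equivariant normal model $Y$ with $F$ centered at $x$, and that its inertia group is generated by one element $\gamma$. We must also show that the fixed divisor $F$ genuinely witnesses the failure of virtual freeness of $\gamma$ at $x$ in the sense of Definition~\ref{defi:vf_local}, which requires $Y$ to be $\gamma$-equivariant, proper and birational over $X$. To handle this, I would first take a $G$-equivariant log resolution and then pass to the normalized fiber product with a model of $X/G$ carrying $E$. The remaining subtlety is the case where the minimizing divisor has trivial inertia. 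That case is exactly the $\gamma=1$ term, so the minimum is correctly indexed by $\operatorname{NVF}(X,x)$ including the identity.
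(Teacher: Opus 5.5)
Your proposal follows the same route as the paper. The paper reruns the argument of \cite{NS22}*{Remark 4.13} and notes that the cyclic inertia group $G_F$ of a divisor $F$ over $x$ consists of elements that are not virtually free at $x$. That is exactly your observation that a generator $\gamma$ of the inertia of $F$ fixes the divisor $F\subset f^{-1}(x)$ pointwise on the $\gamma$-equivariant normal model $Y$. Your proof of $\operatorname{mld}_{x'}(X/G,\mathfrak a)\ge\min_{\gamma\in\operatorname{NVF}(X,x)}\operatorname{mld}_{x'_\gamma}(X/\langle\gamma\rangle,\mathfrak a\mathcal O_{X/\langle\gamma\rangle})$ is fine, since it only uses that $E_\gamma$ lies over $E$ with ramification index $1$. (A small slip: $Y$ should be the normalization in $k(X)$ of a model of $X/G$ carrying $E$, not a normalization of $X$.)

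The reverse inequality is not correctly argued, because your Hurwitz formula is wrong. Since $K_X=\pi^*K_{X/G}$, the ramification index multiplies \emph{log} discrepancies. If $F$ lies over $E$ with ramification index $r$, then $a_F(X,\mathfrak a\mathcal O_X)=r\,a_E(X/G,\mathfrak a)$. Your formula $r\,a_E-(r-1)$ instead multiplies the discrepancy $a-1$ by $r$. For a check, take $\mathbb A^2\to\mathbb A^2/\{\pm1\}$. The exceptional curve of the minimal resolution has $a_E=1$. The exceptional curve of the blow-up of the origin is fixed pointwise and has $a_F=2=2a_E$, whereas your formula gives $1$. Moreover, even with your formula, $r a_E-(r-1)\ge a_E$ is equivalent to $a_E\ge 1$, not $a_E\le 1$. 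So your case analysis, and the appeal to an unspecified ``standard comparison'', do not hold up.

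With the correct formula there is no delicate case. Take any $\gamma\in G$ and any divisor $E_\gamma$ over $X/\langle\gamma\rangle$ with center $x'_\gamma$. The induced divisor $E$ over $X/G$ has center $x'$, and $a_{E_\gamma}=r\,a_E$ with $r\ge 1$. If $\operatorname{mld}_{x'}(X/G,\mathfrak a)=-\infty$ there is nothing to prove. Otherwise $a_E\ge 0$, so $a_{E_\gamma}\ge a_E\ge\operatorname{mld}_{x'}(X/G,\mathfrak a)$. This gives $\operatorname{mld}_{x'}(X/G,\mathfrak a)\le\operatorname{mld}_{x'_\gamma}(X/\langle\gamma\rangle,\mathfrak a\mathcal O_{X/\langle\gamma\rangle})$ for every $\gamma\in G$, in particular on $\operatorname{NVF}(X,x)$, and completes the proof. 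So the defect is a computational error in that step rather than a missing idea.
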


\noindent
In \cite{NS22}*{Remark 4.13}, it is proved that 
\[
\operatorname{mld}_{x'}(X/G, \mathfrak{a}) = \min _{\gamma \in G} \operatorname{mld}_{x' _{\gamma}} \bigl( X/\langle \gamma \rangle , \mathfrak{a} \mathcal{O}_{X/\langle \gamma \rangle} \bigr). 
\]
Proposition \ref{prop:eq_local_intro} asserts the stronger fact that this minimum is always attained at some element $\gamma \in \operatorname{NVF}(X,x)$ (see Remark \ref{rmk:eq_local}).

To present our main result on the PIA conjecture, we introduce the following simplified setting of quotient singularities (see Setup \ref{setup:nonlin} for the general setting).

\begin{setup}\label{setup:intro}
Suppose that a finite group $G$ acts on a smooth variety $X$, and that the action is free in codimension one. 
Let $X' := X / G$ be the quotient variety. 
Let $x \in X$ be a $G$-fixed closed point, and let $x' \in X'$ be its image. 
Let $D'$ be a normal prime Cartier divisor on $X'$ passing through $x'$, and let $D$ be its inverse image in $X$. 
\end{setup}

Under this setup, we establish a necessary and sufficient condition for the PIA conjecture to hold, thereby clarifying the mechanism of the counterexample.

\begin{thm}[$\subset$ Theorem \ref{thm:PIA_equiv}]\label{thm:PIA_equiv_intro}
In the setting of Setup \ref{setup:intro}, let $\mathfrak{b}$ be an $\mathbb{R}$-ideal sheaf on $X'$. 
Suppose that $D'$ is not contained in the cosupport of $\mathfrak{b}$. 
Then the following conditions are equivalent: 
\begin{enumerate}
\item[(i)]
The minimum 
\[
\min _{\gamma \in \operatorname{NVF}(X,x)} \operatorname{mld}_{x'_{\gamma}} \bigl( X/\langle \gamma \rangle, \mathfrak{b} \mathcal{O}_{X/\langle \gamma \rangle} (-D_{\gamma}) \bigr)
\]
is attained at some $\gamma \in \operatorname{NVF}(D,x)$, where $x'_{\gamma} \in X/\langle \gamma \rangle$ denotes the image of $x$, and $D_{\gamma} := D/\langle \gamma \rangle$. 

\item[(ii)]
The PIA conjecture holds; that is, 
\[
\operatorname{mld}_{x'} \bigl( X', \mathfrak{b} \mathcal{O}_{X'} (-D') \bigr) = 
\operatorname{mld}_{x'} (D', \mathfrak{b} \mathcal{O}_{D'}). 
\]
\end{enumerate}
\end{thm}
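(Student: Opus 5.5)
Both sides of (ii) can be rewritten as minima over cyclic quotients by Proposition \ref{prop:eq_local_intro}, and the equivalence with (i) then follows from comparing these minima term by term.

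\emph{Ambient side.} Since $G$ acts on the smooth variety $X$ freely in codimension one, Proposition \ref{prop:eq_local_intro} applies to $X$ with $\mathfrak{a} = \mathfrak{b}\mathcal{O}_{X'}(-D')$. It gives
\[
\operatorname{mld}_{x'}\bigl(X', \mathfrak{b}\mathcal{O}_{X'}(-D')\bigr) = \min_{\gamma \in \operatorname{NVF}(X,x)} \operatorname{mld}_{x'_{\gamma}}\bigl(X/\langle\gamma\rangle, \mathfrak{b}\mathcal{O}_{X/\langle\gamma\rangle}(-D_\gamma)\bigr).
\]
Here we use that $D'$ is Cartier, so its pullback to $X/\langle \gamma\rangle$ is the Cartier divisor $D_\gamma$.

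\emph{Divisor side.} $D$ is a Cartier divisor in the smooth variety $X$, hence lci and normal. To apply Proposition \ref{prop:eq_local_intro} to $D$ we need the $G$-action on $D$ to be free in codimension one. I would argue this holds because $D \to D'$ is étale in codimension one: $D'$ is normal and Cartier, and the action on $X$ is free in codimension one. This step may instead be handled by Setup \ref{setup:nonlin}; if it fails, the ramification divisor would have to be carried along. Granting it, we get
\[
\operatorname{mld}_{x'}(D', \mathfrak{b}\mathcal{O}_{D'}) = \min_{\gamma \in \operatorname{NVF}(D,x)} \operatorname{mld}_{x'_\gamma}\bigl(D_\gamma, \mathfrak{b}\mathcal{O}_{D_\gamma}\bigr).
\]

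\emph{Comparison.} First, $\operatorname{NVF}(D,x) \subset \operatorname{NVF}(X,x)$. Indeed, if $\gamma$ acts virtually freely on $X$ at $x$, take any $\gamma$-equivariant modification $Z \to D$. After equivariant resolution we may assume $Z$ is the strict transform of $D$ in some $\gamma$-equivariant log resolution $Y \to X$. A fixed divisor of $Z$ over $x$ would then force a fixed divisor of $Y$ over $x$ on a suitable blow-up; concretely, blow up the fixed component of $Z$ and take the exceptional divisor. This construction is the delicate part.

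Second, for each $\gamma \in \operatorname{NVF}(D,x)$, PIA holds for the cyclic quotient pair:
\[
\operatorname{mld}_{x'_\gamma}\bigl(X/\langle\gamma\rangle, \mathfrak{b}(-D_\gamma)\bigr) = \operatorname{mld}_{x'_\gamma}\bigl(D_\gamma, \mathfrak{b}\bigr).
\]
I would prove this via the arc-space description of mlds on quotients by $\langle\gamma\rangle$, using twisted arcs indexed by conjugacy classes in $\langle\gamma\rangle$. The contact-order computation of Ein--Mustaţă transfers to each twisted arc space of $X$ and of $D$. The failure of PIA in general comes from twisted sectors of $X$ whose arcs do not meet $D$ with the expected codimension. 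I expect the paper's characterization of virtual freeness via arcs to say exactly that such sectors have no arcs centered at $x$ of small codimension. So for $\gamma \in \operatorname{NVF}(D,x)$ the equality should follow, with the inequality ``$\le$'' (ambient $\le$ divisor) coming from inversion of adjunction in general.

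Together these give
\[
\operatorname{mld}_{x'}(X',\mathfrak{b}(-D')) \le \min_{\operatorname{NVF}(D,x)} = \operatorname{mld}_{x'}(D',\mathfrak{b}),
\]
with equality iff the ambient minimum is attained on $\operatorname{NVF}(D,x)$. For the direction (ii)$\Rightarrow$(i) we also use the termwise inequality $\operatorname{mld}(X/\langle\gamma\rangle, \cdot) \le \operatorname{mld}(D_\gamma,\cdot)$ at each $\gamma \in \operatorname{NVF}(D,x)$: if equality holds globally, some $\gamma \in \operatorname{NVF}(D,x)$ attains the divisor-side minimum, and hence the ambient one.

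The main obstacle is the termwise PIA for cyclic quotients restricted to $\gamma \in \operatorname{NVF}(D,x)$. This needs the arc-space characterization of local virtual freeness, which the paper develops later.
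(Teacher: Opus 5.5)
Your reduction is the same as the paper's. Both sides of (ii) are rewritten via Proposition~\ref{prop:eq_local} (for $X$ and for $D$), the termwise equality \eqref{eq:term_eq} is proved for $\gamma\in\operatorname{NVF}(D,x)$, and the two minima are compared using $\operatorname{NVF}(D,x)\subset\operatorname{NVF}(X,x)$. The step you defer is exactly Theorem~\ref{thm:PIA_nonlin}, applied to $\langle\gamma\rangle$ acting on the smooth variety $X$, with $D$ cut out by one invariant equation.

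Note its hypothesis, though. \emph{Every} element of $\langle\gamma\rangle$ must be non-virtually-free on $D$ at $x$, because all the sectors $\gamma^{\ell}$ occur among the arcs of $D/\langle\gamma\rangle$. Knowing this for $\gamma$ alone does not directly give it. It is supplied by Lemma~\ref{lem:vf_properties}(1) (a $\gamma$-fixed point is $\gamma^{\ell}$-fixed), and you need to invoke it.

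Your guess at the mechanism is also off. The issue is not that bad sectors lack arcs ``of small codimension''. Each irreducible component of the relevant contact locus is closed and $k^*$-stable, hence contains the trivial arc (Lemma~\ref{lem:k_action}). Once the local twisted arc space is non-thin (Theorem~\ref{thm:vf_thin_local}), every cylinder through the trivial arc is non-thin (Theorem~\ref{thm:5.2}). This is what replaces the klt hypothesis in Claim~\ref{claim:2}.

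Your argument for $\operatorname{NVF}(D,x)\subset\operatorname{NVF}(X,x)$ does not work as written. Suppose $\gamma$ acts on the normal bundle of a codimension-two fixed component $W\subset Y$ with two distinct eigenvalues. Then the exceptional divisor of $\operatorname{Bl}_W Y$ is fixed only along two sections, so no fixed divisor appears; you would need a weighted blow-up. The reduction to strict transforms would also need its own argument.

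Use the fixed-point characterization instead. If $\gamma$ is virtually free on $X$ at $x$, Proposition~\ref{prop:vf_local_eq}(2) gives $f\colon Y\to X$ with no $\gamma$-fixed point over $x$. The normalized strict transform of $D$ is a $\gamma$-equivariant modification of $D$ that maps equivariantly into $Y$. Hence it has no fixed point over $x$, so $\gamma\notin\operatorname{NVF}(D,x)$. This is Lemma~\ref{lem:vf_properties}(3). In Setup~\ref{setup:intro} the inclusion is even automatic: $X$ is smooth, so Theorem~\ref{thm:klt_local} gives $\operatorname{NVF}(X,x)=G$.

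Your justification of freeness in codimension one on $D$ is incomplete, since a codimension-two fixed locus of $X$ could a priori be a divisor in $D$. The conclusion still holds, and the paper uses it implicitly. Take a smooth point $p$ of $D$ fixed by $\gamma\neq 1_G$. The differential of the $\gamma$-invariant local equation of $D$ vanishes on the sum of the non-trivial eigenspaces of $T_pX$, which has dimension at least two. So that sum lies in $T_pD$, and the fixed locus has codimension at least two in $D$ near $p$. Normality of $D$ then handles the singular points.

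With these repairs, your comparison of minima is exactly the paper's conclusion.
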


As a direct consequence of this characterization, we obtain the following practical sufficient condition for the PIA conjecture. This occurs exactly when condition (i) is automatically satisfied.

\begin{cor}[$\subset$ Corollary \ref{cor:PIA_general}]\label{cor:PIA_general_intro}
In the setting of Setup \ref{setup:intro}, let $\mathfrak{b}$ be an $\mathbb{R}$-ideal sheaf on $X'$. 
Suppose that $D'$ is not contained in the cosupport of $\mathfrak{b}$. 
If $\operatorname{NVF}(D,x) = G$, then it follows that
\[
\operatorname{mld}_{x'} \bigl( X', \mathfrak{b} \mathcal{O}_{X'} (-D') \bigr) = 
\operatorname{mld}_{x'} (D', \mathfrak{b} \mathcal{O}_{D'}). 
\]
\end{cor}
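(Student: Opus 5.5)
Corollary \ref{cor:PIA_general_intro} should follow directly from Theorem \ref{thm:PIA_equiv_intro}: it suffices to check that condition (i) holds automatically when $\operatorname{NVF}(D,x) = G$.

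\textbf{Step 1 (condition (i)).} The minimum in (i) runs over $\gamma \in \operatorname{NVF}(X,x)$. This is a finite set, being a subset of the finite group $G$. It is nonempty, since the identity element is never virtually free at $x$: for $\gamma = 1$ the fixed locus on any $Y$ is all of $Y$, and $f^{-1}(x)$ is nonempty. So the minimum is attained at some $\gamma_0 \in \operatorname{NVF}(X,x) \subset G$. The hypothesis $\operatorname{NVF}(D,x) = G$ then gives $\gamma_0 \in \operatorname{NVF}(D,x)$, which is exactly (i).

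\textbf{Step 2 (conclusion).} The implication (i)$\Rightarrow$(ii) of Theorem \ref{thm:PIA_equiv_intro} now yields the equality of minimal log discrepancies. Its hypotheses are met: we are in Setup \ref{setup:intro}, and $D'$ is not contained in the cosupport of $\mathfrak{b}$.

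\textbf{Main point to check.} The only subtlety is making sense of the statement ``attained at some $\gamma \in \operatorname{NVF}(D,x)$.'' Here $\operatorname{NVF}(D,x)$ refers to the $G$-action on $D$, which is normal because $D'$ is. This requires $x \in D$, which holds since $x' \in D'$. The nonemptiness and finiteness argument in Step 1 ensures the minimum is genuinely attained rather than an infimum. Beyond these checks, the corollary is a formal consequence of the theorem.
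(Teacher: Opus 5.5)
Your proposal is correct and matches the paper's proof: the paper notes that $\operatorname{NVF}(D,x) \subset \operatorname{NVF}(X,x) \subset G$ together with $\operatorname{NVF}(D,x) = G$ forces $\operatorname{NVF}(X,x) = \operatorname{NVF}(D,x)$. Since the minimum is attained in $\operatorname{NVF}(X,x)$ by Proposition \ref{prop:eq_local}, it is then attained in $\operatorname{NVF}(D,x)$, and Theorem \ref{thm:PIA_equiv} applies; your direct use of $\operatorname{NVF}(X,x) \subset G = \operatorname{NVF}(D,x)$ is the same observation and does not even need the inclusion from Lemma \ref{lem:vf_properties}(3).
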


\noindent
The LSC conjecture for $D'$ under Setup \ref{setup:intro}, which is a special case of Theorem \ref{thm:LSC_intro}, then follows from Proposition \ref{prop:eq_local_intro} and Corollary \ref{cor:PIA_general_intro}, together with the semi-continuity of the set $\operatorname{NVF}(D,x)$ (see Lemma \ref{lem:vf_properties}(2)).

The proof of Theorem \ref{thm:PIA_equiv_intro} relies on the theory of arc spaces for quotient varieties, originally developed by Denef and Loeser \cite{DL02} and later generalized by the authors in \cite{NS22}. 
In Denef and Loeser's framework, the arc space $X'_{\infty}$ of the quotient variety $X' = X/G$ is investigated by decomposing it into subsets ${X'}^{(\gamma)}_{\infty} \subset X'_{\infty}$ associated with each element $\gamma \in G$ (see Definition \ref{defi:dl1}). 
However, when dealing with non-klt varieties, a critical bottleneck arises: as pointed out in \cite{NS22}*{Introduction}, the subset ${X'}^{(\gamma)}_{\infty}$ can become a \textit{thin set} of $X'_{\infty}$. 
In such cases, it becomes too small to carry meaningful geometric information. 
This phenomenon is precisely the reason why the klt assumption was indispensable in the previous result (\ref{conj:PIA}.4), and why the counterexample (\ref{conj:PIA}.5) occurs without it.

To overcome this obstacle, we establish a connection between this arc-theoretic behavior and our localized notion of virtually free actions. 

\begin{thm}[$\subset$ Theorem \ref{thm:vf_thin_local}]\label{thm:thin_eq_intro}
Suppose that a finite group $G$ acts on a normal variety $X$. 
Let $X' = X/G$ be the quotient variety. 
Let $x \in X$ be a $G$-fixed closed point, and let $x' \in X'$ be its image.
For an element $\gamma \in G$, the following conditions are equivalent:
\begin{enumerate}
\item The $\gamma$-action on $X$ is not virtually free at $x$ (i.e., $\gamma \in \operatorname{NVF}(X,x)$).
\item The local arc space ${X'}^{(\gamma)}_{\infty, x'}$ is not a thin set of $X'_{\infty}$ (see Remark \ref{rmk:identity_thin}).
\end{enumerate}
Here, ${X'}^{(\gamma)}_{\infty, x'}$ denotes the set of arcs in ${X'}^{(\gamma)}_{\infty}$ centered at $x'$ (see Definition \ref{defi:arc_local}).
\end{thm}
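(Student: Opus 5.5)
The plan is to translate the thinness of ${X'}^{(\gamma)}_{\infty,x'}$ into a statement about $\gamma$-twisted arcs on $X$, and then into the behaviour of $\gamma$-fixed divisors on equivariant modifications.

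\textbf{Step 1: twisted arcs.} Following the Denef--Loeser framework (Definition \ref{defi:dl1}), an arc in ${X'}^{(\gamma)}_{\infty}$ lifts, after a base change $t = s^{r}$ with $r = \operatorname{ord}(\gamma)$, to a $\gamma$-twisted arc on $X$. Such a lift is an $s$-arc $\tilde\alpha$ with $\tilde\alpha(\zeta s) = \gamma \cdot \tilde\alpha(s)$ for a fixed primitive $r$-th root of unity $\zeta$. The set of these twisted arcs centered at $x$ maps onto ${X'}^{(\gamma)}_{\infty,x'}$ with finite fibres, up to the action of the centralizer. Thinness is therefore equivalent to this twisted arc space being contained in the arc space of a proper closed subset, in the sense of Remark \ref{rmk:identity_thin}. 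We first reduce to $G = \langle \gamma \rangle$. The map $X/\langle\gamma\rangle \to X'$ is finite and étale over the generic point of the twisted locus, so it preserves and reflects non-thinness of the corresponding pieces.

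\textbf{Step 2: $(2)\Rightarrow(1)$.} Suppose the set is not thin. Pick an irreducible component of the twisted arc space that is not contained in any $Z_\infty$ with $Z \subsetneq X$. Its generic point $\tilde\alpha$ corresponds to a $\gamma$-invariant divisorial valuation $v$ on $X$: the valuation $\operatorname{ord}_{\tilde\alpha}$ is divisorial by the usual cylinder argument (Ein--Lazarsfeld--Mustaţă), and it is $\gamma$-invariant by the twisting relation. Take a $\gamma$-equivariant proper birational $Y \to X$ on which $v$ is realized by a prime divisor $E$, for instance by equivariant resolution and normalization. The center of $E$ lies over $x$ because the arc is centered at $x$. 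The twisting relation with $\zeta$ primitive means that $\gamma$ acts trivially on $E$: the residue field action is trivial, since the generic arc meets $E$ at a general point with $\gamma$ acting there by rotation on the normal direction. So the $\gamma$-fixed locus on $Y$ contains $E \subset f^{-1}(x)$, which is a divisor. Hence $\gamma$ is not virtually free at $x$.

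\textbf{Step 3: $(1)\Rightarrow(2)$.} Conversely, suppose some equivariant $Y \to X$ has a $\gamma$-fixed prime divisor $E \subset f^{-1}(x)$. Replace $Y$ by an equivariant resolution; $E$ stays fixed after strict transform at a general point. Near a general point $y \in E$, where $Y$ and $E$ are smooth, linearize: $\gamma$ acts on a local coordinate $u$ defining $E$ by a primitive $m$-th root of unity, with $m \mid r$, and trivially on the coordinates of $E$. The twisted arcs through such points, namely $u = s^{j}\cdot(\text{unit})$ with $j$ in a fixed congruence class modulo $r$ and the other coordinates arbitrary in $s^{r}$, form a cylinder-like family of positive dimension at every jet level, of the same size as an open cylinder in $X'_\infty$. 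Pushing forward along $Y\to X\to X'$ is birational near the generic point, so this family gives a non-thin subset of ${X'}^{(\gamma)}_{\infty,x'}$, centered at $x'$ because $E$ maps to $x$. The change of variables formula for quotients from \cite{NS22} can make the comparison of dimensions precise.

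\textbf{Main obstacle.} The hard part is Step 2's claim that a generic non-thin twisted arc produces a divisor fixed \emph{pointwise} by $\gamma$, rather than merely $\gamma$-stable. I would handle this by computing in the étale local model at a general point of $E$. A $\gamma$-stable but non-fixed $E$ forces the twisted arcs through it into a proper closed subset, namely the arcs lying in the fixed locus of $\gamma$ on $E$ composed with the normal direction. That would make the set thin, which is the contrapositive needed.
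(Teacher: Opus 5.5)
Your argument is viable, but it is organized differently from the paper's. The paper never produces a fixed divisor. It first replaces the definition by the fixed-point criterion of Proposition~\ref{prop:vf_local_eq}. It then transports thinness between ${X'}^{(\gamma)}_{\infty,x'}$ and $X^{[\gamma]}_{\infty,x'}$ via Proposition~\ref{prop:thin_eq}, which uses only that $h\colon X^{[\gamma]}\to X'\times T$ is finite and that $h_\infty(X^{[\gamma]}_\infty)={X'}^{(\gamma)}_\infty$. After that, each direction takes a few lines:
\begin{itemize}
\item Not thin $\Rightarrow$ not virtually free: one arc of $X^{[\gamma]}_{\infty,x'}$ avoiding $Z^{[\gamma]}_\infty$ lifts, by the valuative criterion, to $Y^{[\gamma]}_\infty$ for an arbitrary smooth $\gamma$-equivariant $Y\to X$. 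Its center is a $\gamma$-fixed point over $x$.
\item Not virtually free $\Rightarrow$ not thin: take the trivial arc at a $\gamma$-fixed point over $x$ on a smooth model, and apply Lemma~\ref{lem:2.27'}.
\end{itemize}
Your Step~3 is this second half with a fixed divisor in place of a fixed point. Your explicit family near a general point of $E$ reproves Lemma~\ref{lem:2.27'} in that situation. Non-thinness of its image needs neither a dimension count nor change of variables: $Y^{[\gamma]}\to X'\times T$ is dominant, so preimages of proper closed subsets are proper. Note that this map is generically finite, not birational. Your Step~2 instead extracts a pointwise $\gamma$-fixed exceptional divisor over $x$ from the generic point of a fat component of twisted arcs. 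This bypasses the harder half of Proposition~\ref{prop:vf_local_eq} (fixed point on a smooth model $\Rightarrow$ fixed divisor on some normal model). It also gives a finer, valuative correspondence, but at the cost of a divisoriality statement the paper never needs.

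Two things in Step~2 need correcting.

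\emph{The ``main obstacle'' is immediate.} Reducing $\tilde\beta\circ\rho_\zeta=\gamma\circ\tilde\beta$ modulo $s$ gives $\gamma\circ\tilde\beta(0)=\tilde\beta(0)$ as $K$-points. So if the center of the lifted arc is the generic point of $E$, then $\gamma$ acts trivially on $k(E)$, hence on $E$. Your proposed contrapositive, that a merely $\gamma$-stable $E$ ``would make the set thin'', is not correct. Confining centers to a proper closed subset of $E$ is a cylinder condition, not thinness. The actual contradiction is with the center of $v$ being the generic point of $E$.

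\emph{Divisoriality of $v$ is the genuinely unproved step.} Ein--Lazarsfeld--Musta\c{t}\u{a} does not apply as stated, because twisted arcs do not form a cylinder in $X_\infty$: the twisting relation constrains every coefficient. Mere non-containment in $Z_\infty$ for $Z\subsetneq X$ does not force divisoriality either. For $\gamma=1$, the arc $(t,e^t)$ on $\mathbb{A}^2$ lies in no such $Z_\infty$, yet its valuation is not divisorial. The step can be filled as follows, with $n=\dim X$.
\begin{itemize}
\item Reparametrize the $k[t]$-arcs of your fat component by arbitrary $\tau(s)$ with $\operatorname{ord}_s\tau=1$. This gives an irreducible cylinder of ordinary arcs on the $(n+1)$-dimensional variety $X^{[\gamma]}$.
\item Its generic point lies over the generic point of $X^{[\gamma]}$. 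This is exactly where non-thinness in the sense of Remark~\ref{rmk:identity_thin} is used, i.e.\ with respect to proper closed subsets of $X^{[\gamma]}$, not of $X$.
\item So the associated valuation on $k(X^{[\gamma]})$ is divisorial by de Fernex--Ein--Ishii.
\item Its extension to the finite extension $k(X)(t^{1/d'})$ still has residue transcendence degree $n$.
\item Abhyankar's inequality then forces the restriction $v$ to $k(X)$ to have residue transcendence degree $n-1$, i.e.\ to be divisorial.
\end{itemize}
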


\noindent
This theorem is utilized in Theorem \ref{thm:5.2}, which is the key to the proof of Theorem \ref{thm:PIA_equiv_intro}.

In the approach of \cite{DL02}, the subset ${X'}^{(\gamma)}_{\infty}$ is investigated by defining a certain $k[t]$-scheme $X^{(\gamma)}$ and considering its arc space (Greenberg scheme) $X^{(\gamma)}_{\infty}$ as a $k[t]$-scheme (see Section \ref{section:dl2}). 
In this paper, we introduce a new $k[t]$-scheme, denoted by $X^{[\gamma]}$, to prove the above theorem. 
By definition, $X^{[\gamma]}$ is the quotient of $X \times \operatorname{Spec} k[t^{1/d'}]$ by the $\langle \gamma \rangle$-action, where $d'$ is the order of $\gamma$ (see Definition \ref{defi:twisted_quotient}). 
Both $X^{(\gamma)}$ and $X^{[\gamma]}$ can be viewed as ``branched versions'' of the twisted variety associated with the $G$-action on $X$. 
Indeed, they are isomorphic over the open subset $t \neq 0$ (see Remarks \ref{rmk:twist} and \ref{rmk:iso_out0}), and there is a natural bijection between their sets of $K$-arcs (Proposition \ref{prop:()to[]}).

The primary advantage of $X^{[\gamma]}$ lies in its functorial construction. 
This functoriality makes $X^{[\gamma]}$ highly compatible with the concept of virtually free actions, which is defined via $G$-equivariant resolutions. 
On the other hand, the classical construction $X^{(\gamma)}$ is defined only for linear actions and lacks this functoriality. 
However, it is important to note that $X^{(\gamma)}$ cannot be entirely replaced by $X^{[\gamma]}$ in the arguments of \cite{DL02} and \cite{NS22}. 
The crucial advantage of $X^{(\gamma)}$ is that the number of its defining equations exactly coincides with that of $X$ (see Example \ref{eg:shiki}). 
This property is well-suited for the computation of the canonical divisors and the estimation of codimensions of arc spaces. 
In fact, the proof of the previous result (\ref{conj:PIA}.4) in \cite{NS22} heavily relied on this advantage of $X^{(\gamma)}$. 
In this paper, by bridging the geometric functoriality of $X^{[\gamma]}$ and the algebraic tractability of $X^{(\gamma)}$, we complete the proof of Theorem \ref{thm:PIA_equiv_intro}, as carried out in Theorems \ref{thm:thin_eq_intro}, \ref{thm:5.2}, and \ref{thm:PIA}.

We emphasize here that Corollary \ref{cor:PIA_general_intro} (and consequently Theorem \ref{thm:PIA_equiv_intro}) can be seen as a generalization of the previous result (\ref{conj:PIA}.4), which strictly required the klt assumption.
This is because $\operatorname{NVF}(D,x) = G$ holds if $D$ is klt at $x$ (see Theorem \ref{thm:klt_local}). 
It is worth noting that the proof of Theorem \ref{thm:klt_local} relies on the following deep geometric facts:  
\begin{itemize}
\item 
Takayama's theorem \cite{Tak03}*{Theorem 1.1} on the simple connectedness of the fibers of a resolution of a klt point; or

\item
Hacon and McKernan's result \cite{HM07} on the rational chain connectedness of such fibers, combined with the theorem of Graber, Harris, and Starr \cite{GHS03} asserting the existence of a section for morphisms with rationally connected fibers.
\end{itemize}

The paper is organized as follows. 
In Section \ref{section:pre}, we review basic definitions and properties concerning log pairs, arc spaces, and virtually free actions. 
In Section \ref{section:vfc}, we introduce the $k[t]$-scheme $X^{[\gamma]}$ and characterize virtually free actions in terms of its arc space. 
In Sections \ref{section:dl1} and \ref{section:dl2}, we study the arc spaces of quotient varieties for general and linear actions, respectively, relating our framework to the theory established by Denef and Loeser in \cite{DL02}. 
In Section \ref{section:vf_local}, we introduce the localized notions of virtually free actions and local arc spaces, and prove Proposition \ref{prop:eq_local} ($=$ Proposition \ref{prop:eq_local_intro}). 
In Section \ref{section:arc_local}, we investigate further properties of local arc spaces. In particular, we establish Theorem \ref{thm:5.2}, which plays a crucial role in removing the klt assumption. 
In Section \ref{section:PIA}, we prove Theorem \ref{thm:PIA_equiv} (the full version of Theorem \ref{thm:PIA_equiv_intro}) as our main result on the PIA conjecture, establishing necessary and sufficient conditions for hyperquotient singularities. We then deduce Corollary \ref{cor:PIA_general} (the full version of Corollary \ref{cor:PIA_general_intro}).
In Section \ref{section:cex}, we re-examine the counterexample given in \cite{NS4} from the viewpoint of this necessary and sufficient condition, clarifying the mechanism behind the failure of the PIA conjecture. 
Finally, in Section \ref{section:LSC}, we prove Theorem \ref{thm:LSC} ($=$ Theorem \ref{thm:LSC_intro}), our main result regarding the LSC conjecture for hyperquotient singularities.

\begin{ackn} 
The first author is partially supported by Inamori Foundation and by JSPS KAKENHI No.\ 22K13888. 
The second author is partially supported by JSPS KAKENHI No.\ 23K12958.
\end{ackn}

\section*{Notation}

\begin{itemize}
\item 
$k$ denotes an algebraically closed field of characteristic zero. 

\item
A variety over $k$ is an integral separated scheme of finite type over $k$. 

\item
An action of a finite group $G$ on a variety $X$ is called \textit{free in codimension one} 
if it is free on the set of closed points, $|X|_{\rm cl}$, away from a closed subset of codimension at least two.

\item
When considering the quotient $X/G$ of a variety $X$ by a finite group $G$, we always assume that $X/G$ is a variety. 
Note that this assumption is satisfied, for instance, if $X$ is quasi-projective over $k$.
\end{itemize}

\section{Preliminaries}\label{section:pre}
Following \cite{NS22}, we review some definitions and facts regarding log pairs and arc spaces. 

\subsection{Log pairs}
A \textit{log pair} $(X, \mathfrak{a})$ consists of a normal $\mathbb{Q}$-Gorenstein variety $X$ over $k$ and 
an $\mathbb{R}$-ideal sheaf $\mathfrak{a}$ on $X$. 
Here, an $\mathbb{R}$-\textit{ideal sheaf} $\mathfrak{a}$ on $X$ is a formal product 
$\mathfrak{a} = \prod _{i = 1} ^s \mathfrak{a}_i ^{\tau_i}$, where $\mathfrak{a}_1, \ldots, \mathfrak{a}_s$ are 
non-zero coherent ideal sheaves on $X$ 
and $\tau _1, \ldots , \tau _s$ are positive real numbers. 
For a morphism $Y \to X$ and an $\mathbb{R}$-ideal sheaf $\mathfrak{a} = \prod _{i = 1} ^s \mathfrak{a}_i ^{\tau _i}$ on $X$, 
we denote by $\mathfrak{a} \mathcal{O}_Y$ the $\mathbb{R}$-ideal sheaf $\prod _{i = 1} ^s (\mathfrak{a}_i \mathcal{O}_Y)  ^{\tau _i}$ on $Y$. 

Let $\bigl( X, \mathfrak{a} = \prod _{i = 1} ^s \mathfrak{a}_i ^{\tau _i} \bigr)$ be a log pair. 
Let $f\colon X' \to X$ be a proper birational morphism from a normal variety $X'$, and let $E$ be a prime divisor on $X'$. 
We denote the relative canonical divisor by $K_{X'/X} := K_{X'} - f^* K_X$. 
Then the \textit{log discrepancy} of $(X, \mathfrak{a})$ at $E$ is defined as 
\[
a_E(X, \mathfrak{a}) := 1 + \operatorname{ord}_E (K_{X'/X}) - \operatorname{ord}_E \mathfrak{a}, 
\]
where we define $\operatorname{ord}_E \mathfrak{a} := \sum _{i=1} ^s \tau _i \operatorname{ord}_E \mathfrak{a}_i$. 
The image $f(E)$ is called the \textit{center} of $E$ on $X$, and we denote it by $c_X(E)$. 
For a closed point $x \in X$, we define the \textit{minimal log discrepancy} at $x$ as 
\[
\operatorname{mld}_x (X, \mathfrak{a}) := \inf _{c_X(E) = \{ x \}} a_E (X, \mathfrak{a})
\]
if $\dim X \ge 2$, where the infimum is taken over all prime divisors $E$ over $X$ with center $c_X(E) = \{ x \}$. 
It is known that $\operatorname{mld}_x (X, \mathfrak{a}) \in \mathbb{R}_{\ge 0} \cup \{ - \infty \}$ in this case (cf.\ \cite{KM98}*{Corollary 2.31}). 
When $\dim X = 1$, we define $\operatorname{mld}_x (X, \mathfrak{a}) := \inf _{c_X(E) = \{ x \}} a_E (X, \mathfrak{a})$ 
if the infimum is non-negative, and we define $\operatorname{mld}_x (X, \mathfrak{a}) := - \infty$ otherwise. 

Let $D$ be a $\mathbb{Q}$-Cartier divisor on $X$. 
Take $\ell \in \mathbb{Z} _{>0}$ such that $\ell D$ is Cartier. 
Then we define 
\[
a_E (X, D) := a_E \bigl( X, (\mathcal{O}_X(- \ell D)) ^{\frac{1}{\ell}} \bigr), \quad 
\operatorname{mld}_x (X, D) := \operatorname{mld}_x \bigl( X, (\mathcal{O}_X(- \ell D)) ^{\frac{1}{\ell}} \bigr). 
\]
Note that these values do not depend on the choice of $\ell$. 

When $\mathfrak{a} = \mathcal{O}_X$, we write $a_E (X) := a_E (X, \mathfrak{a})$ and $\operatorname{mld}_x (X) := \operatorname{mld}_x (X, \mathfrak{a})$ for simplicity.

\subsection{Arc spaces of $k$-schemes}\label{subsection:arc}
In this subsection, we briefly review the definitions and some properties of jet schemes and arc spaces. The reader is referred to \cite{EM09} for details.

Let $X$ be a scheme over $k$. 
Let $({\sf Sch}/k)$ be the category of $k$-schemes and $({\sf Sets})$ the category of sets.
Define a contravariant functor $F_{m} \colon ({\sf Sch}/k) \to ({\sf Sets})$ by 
\[
 F_{m}(Y) = \operatorname{Hom} _{k}\left( Y\times_{\operatorname{Spec} k} \operatorname{Spec} k[t]/(t^{m+1}), X \right).
\]
This functor $F_{m}$ is represented by a scheme $X_m$ over $k$, and 
the scheme $X_m$ is called the $m$-th \textit{jet scheme} of $X$.
For $m \ge n \ge 0$, the canonical surjective homomorphism $k[t]/(t^{m+1}) \to k[t]/(t^{n+1})$ induces a morphism $\pi_{mn} \colon X_m \to X_n$.
The projective limit 
\[
X_\infty := \varprojlim_{m} X_m
\]
exists, together with natural projections $\psi_{m} \colon X_{\infty} \to X_m$. The scheme $X_{\infty}$ is called the \textit{arc space} of $X$. 
For any field extension $K$ of $k$, there is a natural bijection
\[
\operatorname{Hom} _{k}(\operatorname{Spec} K, X_{\infty}) \simeq \operatorname{Hom} _{k}(\operatorname{Spec} K[[t]], X).
\]

For $m\in\mathbb Z_{\ge 0}\cup\{\infty\}$ and a morphism $f \colon Y \to X$ of schemes of finite type over $k$,
we denote by $f_m \colon Y_m \to X_m$ the morphism induced by $f$.

A subset $C \subset X_{\infty}$ is called a \textit{cylinder} if $C = \psi_{m} ^{-1}(S)$ holds for some $m \ge 0$ and 
a constructible subset $S \subset X_m$. A typical example of a cylinder appearing in this paper is the \textit{contact locus} $\operatorname{Cont}^{\geq m}(\mathfrak{a})$, which is defined as follows. 

\begin{defi}\label{defi:ord_cont}
\begin{enumerate}
\item 
For an arc $\alpha \in X_{\infty}$ represented by a morphism $\alpha \colon \operatorname{Spec} K[[t]] \to X$ (where $K$ is a field extension of $k$) and an ideal sheaf $\mathfrak{a} \subset \mathcal{O}_X$, 
the \textit{order} of $\mathfrak{a}$ measured by $\alpha$ is defined as
\[
\operatorname{ord}_{\alpha} (\mathfrak{a}) = \sup \{ r \in \mathbb{Z}_{\geq 0} \mid \alpha^*(\mathfrak{a}) \subset (t^r) \} \in \mathbb{Z}_{\ge 0} \cup \{ \infty \}, 
\]
where $\alpha^*(\mathfrak{a})$ denotes the ideal of $K[[t]]$ generated by the image of $\mathfrak{a}$.

\item 
For $m \in \mathbb{Z}_{\ge 0}$, the \textit{contact locus} $\operatorname{Cont}^{\geq m}(\mathfrak{a}) \subset X_{\infty}$ is defined as
\[
\operatorname{Cont}^{\geq m}(\mathfrak{a}) = \{ \alpha \in X_\infty \mid \operatorname{ord}_{\alpha}(\mathfrak{a}) \geq m\}.
\]
\end{enumerate}
\end{defi}

\begin{defi}\label{defi:thin_k}
Let $X$ be a variety over $k$. A subset $A \subset X_{\infty}$ is said to be \textit{thin} if $A \subset Z_{\infty}$ for some proper closed subscheme $Z$ of $X$. 
\end{defi}

\begin{rmk}\label{rmk:thin_cylinder_k}
For a cylinder $A \subset X_{\infty}$, $A$ is thin if and only if $A \subset (X_{\rm sing})_{\infty}$, where $X_{\rm sing}$ denotes the singular locus of $X$ (see \cite{EM09}*{Lemma 5.1}). 
\end{rmk}

\subsection{Arc spaces of $k[t]$-schemes}\label{subsection:arc2}
In this subsection, we briefly discuss arc spaces of $k[t]$-schemes. 
We refer the reader to \cite{DL02} and \cite{NS22} for further details.

Let $X$ be a scheme over $k[t]$. 
For a non-negative integer $m$, we define a contravariant functor $F^X_{m} \colon ({\sf Sch}/k) \to ({\sf Sets})$ by 
\[
F^X _{m}(Y) = \operatorname{Hom} _{k[t]} \left( Y \times_{\operatorname{Spec} k} \operatorname{Spec} k[t]/(t^{m+1}), X \right).
\]
This functor $F^X_{m}$ is represented by a scheme $X_m$ over $k$. 
We retain the notation $X_{\infty}$, $\psi_m$, and $\pi_{mn}$ used in the previous setting: 
\[
X_\infty := \mathop{\varprojlim}\limits_{m} X_m, \qquad \psi_{m} \colon X_{\infty} \to X_m, \qquad \pi_{mn} \colon X_m \to X_n. 
\]
A subset $C \subset X_{\infty}$ is called a \textit{cylinder} if it is of the form $C = \psi_{m}^{-1}(S)$ for some integer $m \ge 0$ and a constructible subset $S \subset X_m$. 
The \textit{contact locus} $\operatorname{Cont}^{\geq m}(\mathfrak{a})$ is defined in the same way as in Definition \ref{defi:ord_cont}.

\begin{rmk}\label{rmk:bc}
Let $X$ be a scheme of finite type over $k$, and let $X' = X \times_{\operatorname{Spec} k} \operatorname{Spec} k[t]$ be its base change to $k[t]$. 
Then there is a natural isomorphism $X_m \simeq X'_m$ for any $m \in \mathbb{Z}_{\ge 0} \cup \{ \infty \}$. 
Here, $X_m$ denotes the jet or arc scheme over $k$ as defined in Subsection \ref{subsection:arc}, while $X'_m$ denotes the corresponding scheme over $k[t]$ as defined in this subsection. 
Therefore, the theory of arc spaces of $k[t]$-schemes can be viewed as a generalization of that for $k$-schemes. 
\end{rmk}

\begin{rmk}\label{rmk:GB}
The schemes $X_m$ and $X_{\infty}$ coincide with the Greenberg schemes $\operatorname{Gr}_m(\mathfrak{X})$ and $\operatorname{Gr}_{\infty}(\mathfrak{X})$ (as defined in \cite{Seb04} and \cite{CLNS}) of the formal scheme $\mathfrak{X}$ over $k[[t]]$ associated to the $k[t]$-scheme $X$. We refer the reader to \cite{NS22}*{Remark 2.14} for details.
\end{rmk}

\begin{rmk}\label{rmk:dom}
Let $X$ be a scheme of finite type over $k[t]$. 
Let $X^{(1)}, \ldots, X^{(\ell)}$ be the irreducible components of $X$ that dominate $\operatorname{Spec} k[t]$. 
Then the arc space $X_{\infty}$ decomposes into the union of the arc spaces of these components: $X_{\infty} = \bigcup _{i =1} ^{\ell} X^{(i)}_{\infty}$.
\end{rmk}

\begin{rmk}\label{rmk:star}
In \cite{NS22}, the authors introduce the condition $(\star)_n$ below, and they study the arc spaces of such $k[t]$-schemes $X$.   
\begin{quote}
$(\star)_n$: \ $X$ is a scheme of finite type over $\operatorname{Spec} k[t]$. 
Any irreducible component of $X$ has dimension at least $n+1$. 
Furthermore, any irreducible component dominating $\operatorname{Spec} k[t]$ is exactly $(n+1)$-dimensional.
\end{quote}
\end{rmk}

\begin{defi}\label{defi:thin}
Let $n$ be a non-negative integer, and 
let $X$ be a scheme of finite type over $k[t]$ satisfying $(\star)_n$. 
A subset $A \subset X_{\infty}$ is called \textit{thin} 
if $A \subset Z_{\infty}$ holds for some closed subscheme $Z$ of $X$ with $\dim Z \le n$. 
\end{defi}

\begin{rmk}\label{rmk:thin_cylinder_kt}
\begin{enumerate}
\item
Suppose that $X$ is a variety over $k[t]$ and $A$ is a cylinder in Definition \ref{defi:thin}. 
Then $A$ is thin if and only if $A \subset W_{\infty}$, 
where $W \subset X$ is the locus defined by the relative Jacobian ideal $\operatorname{Jac}_{X/k[t]}$ (see \cite{NS22}*{Lemma 2.23}). 

\item
By Remark \ref{rmk:bc}, for a variety $X$ over $k$, its arc space $X_{\infty}$ is naturally identified with the arc space of $X \times_{\operatorname{Spec} k} \operatorname{Spec} k[t]$ viewed as a $k[t]$-scheme. 
Under this identification, a cylinder $C \subset X_{\infty}$ is thin in the sense of Definition \ref{defi:thin_k} if and only if the corresponding cylinder is thin in the sense of Definition \ref{defi:thin}. 
This equivalence follows from Remark \ref{rmk:thin_cylinder_k} and the property explained in (1) above. 

\item
If we consider a general subset $C$ of $X_{\infty}$ that is not necessarily a cylinder, the equivalence in (2) no longer holds. 
While any subset that is thin in the sense of Definition \ref{defi:thin_k} is always thin in the sense of Definition \ref{defi:thin}, the converse is not true in general.
\end{enumerate}
\end{rmk}

\subsection{Virtually free actions}
In this subsection, following \cite{NS4}, we define the concept of virtually free actions and summarize their main properties established therein. 

\begin{defi}\label{defi:vf}
Suppose that a finite group $G$ acts on a normal variety $X$. 
We say that the $G$-action on $X$ is \textit{virtually free} if the following condition holds: 
for every $G$-equivariant proper birational morphism $Y \to X$ from a normal variety $Y$, the $G$-action on $Y$ is free in codimension one. 
\end{defi}

This definition admits the following equivalent characterizations.

\begin{prop}[\cite{NS4}*{Proposition 3.8}]\label{prop:vf_equiv}
Suppose that a finite group $G$ acts on a normal variety $X$. 
Then the following conditions are equivalent:
\begin{enumerate}
\item 
The $G$-action on $X$ is virtually free. 
\item
There exists a $G$-equivariant proper birational morphism $Y \to X$ from a normal variety $Y$ such that the $G$-action on $Y$ is free. 
\item 
For every $G$-equivariant proper birational morphism $Y \to X$ from a smooth variety $Y$, the $G$-action on $Y$ is free. 
\end{enumerate}
\end{prop}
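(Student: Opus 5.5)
We prove the cycle of implications (1) $\Rightarrow$ (3) $\Rightarrow$ (2) $\Rightarrow$ (1), using only equivariant resolution and the purity of the branch locus.

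\emph{(1) $\Rightarrow$ (3).} Let $Y \to X$ be a $G$-equivariant proper birational morphism with $Y$ smooth. By (1), the $G$-action on $Y$ is free in codimension one. We must upgrade this to freeness. Suppose some $\gamma \neq 1$ fixes a point $y \in Y$. Because $Y$ is smooth and the stabilizer $H = \operatorname{Stab}(y)$ is finite, we may linearize: $H$ acts on $T_yY$, and formally near $y$ the action is isomorphic to a linear one. Blow up $Y$ along a smooth $H$-invariant center through $y$ fixed by $\gamma$; concretely, take the $G$-orbit of the smooth fixed component $F \ni y$ of $\gamma$. The blow-up $Y_1 \to Y$ is again $G$-equivariant, proper and birational, and $Y_1$ is smooth. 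On the exceptional divisor $E = \mathbb{P}(N_{F/Y})$, the element $\gamma$ acts on the fibers through its linear action on the normal space. Since $\gamma$ acts with some eigenvalue $\zeta$ of multiplicity $m \ge 1$ on $N$, its fixed locus on each fiber $\mathbb{P}(N_y)$ contains the projective subspace $\mathbb{P}(N_\zeta)$. If we instead first blow up so that the eigenspace structure is suitable, we obtain a codimension-one fixed component. The cleanest choice is this: blow up a point (more precisely, the orbit of a point) of $F$ where $\gamma$ acts, and iterate. After enough equivariant blow-ups, $\gamma$ fixes a divisor pointwise, which contradicts (1) applied to the composite morphism. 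For the contradiction it suffices to find one equivariant birational model on which $\gamma$ fixes a divisor pointwise.

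The trick is to pick an eigenvector $v$ of $\gamma$ on $T_yY$ and blow up along $F_1 :=$ the fixed component of the subgroup generated by $\gamma$. On the exceptional divisor, the fixed points of $\gamma$ in $\mathbb{P}(N)$ form the disjoint union of the $\mathbb{P}(N_\zeta)$. Choose $\zeta$ with $\dim N_\zeta$ maximal and blow up the locus $\mathbb{P}(N_\zeta)$, again using its $G$-orbit. Induction on $\dim N - \dim N_\zeta$, or on the codimension of the fixed component, then reduces the codimension of the fixed locus until it reaches $1$. This inductive decrease of the codimension of a $\gamma$-fixed component under equivariant blow-ups is the step I expect to be the main obstacle. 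The concern is that blowing up a fixed component of codimension $c \ge 2$ produces a fixed locus $\mathbb{P}(N_\zeta)$ inside $E$ whose codimension in $Y_1$ is $c - \dim N_\zeta + 1$, which decreases only if $\dim N_\zeta \ge 2$. I would fix this by blowing up a smaller center: a codimension-two $\gamma$-invariant smooth subvariety $Z \supset F$ that is tangent to a $2$-dimensional sum of eigenspaces. After that blow-up, when the two eigenvalues are equal, $\gamma$ acts trivially on the fiber $\mathbb{P}^1$, so $\gamma$ fixes a divisor pointwise. For general eigenvalues, I would use weighted (toric) blow-ups in linearized coordinates, choosing the weights so that $\gamma$ acts trivially on the exceptional divisor. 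This is the standard fact that a finite cyclic linear action admits a toric divisor on which it acts trivially. Taking the $G$-saturation and an equivariant resolution preserves the fact that $\gamma$ fixes a divisor.

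\emph{(3) $\Rightarrow$ (2).} Take a $G$-equivariant resolution $Y \to X$, which exists in characteristic zero. By (3), the $G$-action on $Y$ is free, so (2) holds.

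\emph{(2) $\Rightarrow$ (1).} Let $Y_0 \to X$ be as in (2), with free action, and let $Y \to X$ be any $G$-equivariant proper birational morphism from a normal $Y$. Take the normalization $W$ of the closure of the graph in $Y \times_X Y_0$. This is $G$-equivariant, and $W \to Y_0$ is proper birational. Freeness pulls back along the equivariant morphism $W \to Y_0$: a point of $W$ fixed by $\gamma$ maps to a point of $Y_0$ fixed by $\gamma$, so $G$ acts freely on $W$. Now let $E \subset Y$ be a prime divisor fixed pointwise by some $\gamma \neq 1$. Since $W \to Y$ is birational and $Y$ is normal, $E$ has a strict transform divisor $E_W$ on $W$. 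The element $\gamma$ fixes $E_W$ pointwise, because it fixes a dense open subset of $E_W$ and the fixed locus is closed. This contradicts freeness on $W$. Hence the action on $Y$ is free in codimension one, which proves (1).
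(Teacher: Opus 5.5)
\textbf{Verdict: there is a genuine gap, in (1) $\Rightarrow$ (3).}

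Your implications (3) $\Rightarrow$ (2) and (2) $\Rightarrow$ (1) are correct. In particular, the graph-closure model $W$ and the use of normality of $Y$ to obtain a strict transform $E_W$ work as you say. But (1) $\Rightarrow$ (3) carries the whole content of the statement, and there the proposal does not yet contain a proof.

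\emph{The explicit blow-up schemes do not work as stated.} Some have no termination argument: blowing up $F$ does not lower the codimension when all eigenvalues on $N_{F/Y}$ are simple, as you observe yourself. The proposed repair is wrong. Suppose $Z \supset F$ is a $\gamma$-invariant smooth centre of codimension two with $Z \neq F$ near $y$. Then $\gamma$ moves the points of $Z \setminus F$, so it permutes the fibres of $\mathbb{P}(N_{Z/Y}) \to Z$ over $Z \setminus F$. The $\gamma$-fixed part of the exceptional divisor therefore lies over $F$ and has dimension at most $\dim F + 1$. This is a divisor only if $\operatorname{codim} F = 2$, i.e.\ only if $Z = F$ near $y$.

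\emph{The weighted blow-up is the right idea, but it is only asserted.} You call it a ``standard fact'' without saying which weights work or why. The closing claim that $G$-saturation and equivariant resolution ``preserve'' the fixed divisor is likewise unjustified.

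\emph{What has to be supplied.}
\begin{enumerate}
\item Let $r$ be the order of $\gamma$ and $\zeta$ a primitive $r$-th root of unity. Choose a regular system of parameters $x_1, \ldots, x_n \in \mathcal{O}_{Y,y}$ consisting of $\gamma$-eigenfunctions, $\gamma^* x_i = \zeta^{e_i} x_i$, by lifting an eigenbasis of $\mathfrak{m}_y/\mathfrak{m}_y^2$ and averaging. You need honest functions here, not formal linearizing coordinates.
\item Let $v$ be the monomial valuation with weights $w_i \in \{1, \ldots, r\}$, $w_i \equiv e_i \pmod r$. Then $v$ is divisorial, $\gamma$-invariant and centred at $y$. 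Moreover $\gamma$ acts on the degree-$m$ part of $\operatorname{gr}_v \mathcal{O}_{Y,y} \cong k[x_1, \ldots, x_n]$ by $\zeta^m$.
\item The map $f/g \mapsto \operatorname{in}_v(f)/\operatorname{in}_v(g)$, for $v(f) = v(g)$, embeds the residue field $k_v$ $\gamma$-equivariantly into the degree-zero part of $\operatorname{Frac}(\operatorname{gr}_v \mathcal{O}_{Y,y})$. Hence $\gamma$ acts trivially on $k_v$.
\item Realize $E_v$ as a prime divisor on a $G$-equivariant normal model $Y' \to X$. For instance, take a normal model $Y'' \to X$ on which every $g_* v$, $g \in G$, is a divisor. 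Let $Y'$ be the normalization of the closure of a $G$-invariant dense open $U \subset X$ embedded in $\prod_{g \in G} Y''$ via $u \mapsto (gu)_g$. Each projection $Y' \to Y''$ is proper birational, so the centre of $v$ on $Y'$ dominates a divisor and is itself a divisor.
\item The action of $\gamma$ on the $\gamma$-stable divisor $E_v \subset Y'$ is determined by its action on $k(E_v) = k_v$, so it is trivial. Hence the $G$-action on $Y'$ is not free in codimension one, contradicting (1).
\end{enumerate}

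The two transfer steps (realizing the divisor on a $G$-equivariant normal model, and transporting triviality of the $\gamma$-action) are exactly \cite{NS4}*{Lemma 3.3} and \cite{NS4}*{Lemma 3.6}. The paper invokes both in the proof of Proposition \ref{prop:vf_global_local}. With them and the explicit choice of weights, your outline becomes a complete proof.
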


\begin{rmk}\label{rmk:vf}
By definition, we have the following implications for a $G$-action on $X$:
\[
\text{free} \implies \text{virtually free} \implies \text{free in codimension one}.
\]
Theorem \ref{thm:kltcase} below shows that the converse ``virtually free $\implies$ free'' holds when $X$ is klt. 
In Example \ref{eg:cubic}, we provide an example of an action that is virtually free but not free.
\end{rmk}

\begin{eg}\label{eg:cubic}
Let $\xi \in k$ be a primitive cube root of unity. 
Let $H \subset \mathbb{A}_k^3$ be the hypersurface defined by $x_1^3 + x_2^3 + x_3^3 = 0$. 
Let $\gamma := \operatorname{diag}(1, \xi, \xi^2) \in \operatorname{GL}_3(k)$. 
Then, the $\langle \gamma \rangle$-action on $H$ is virtually free. 
Indeed, one can see that the induced $\langle \gamma \rangle$-action on the blow-up $\operatorname{Bl}_0 H$ of $H$ at the origin $(0,0,0)$ is free (see \cite{NS4}*{Lemma 4.1} for more details). 
\end{eg}

An important property of virtual freeness is that, as noted in Remark \ref{rmk:vf}, the converse implication 
``$\text{virtually free} \implies \text{free}$'' holds true provided that $X$ is klt. 

\begin{thm}[\cite{NS4}*{Theorem 3.10}]\label{thm:kltcase}
Suppose that a finite group $G$ acts on a klt variety $X$.
If the $G$-action on $X$ is virtually free, then this action is free. 
\end{thm}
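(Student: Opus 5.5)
We argue by contradiction, using the characterization of virtual freeness in Proposition \ref{prop:vf_equiv}. Suppose the $G$-action on the klt variety $X$ is virtually free but not free. Then some non-trivial $\gamma \in G$ fixes a closed point $x \in X$, and we may replace $G$ by the cyclic group $\langle \gamma \rangle$; restricting a virtually free action to a subgroup keeps it virtually free, directly from Definition \ref{defi:vf}. Take a $G$-equivariant resolution $f \colon Y \to X$. Such a resolution exists in characteristic zero by equivariant resolution of singularities, and since only finitely many points are involved we can pass to a $G$-stable quasi-projective neighborhood. By Proposition \ref{prop:vf_equiv}(3), the $G$-action on $Y$ is free. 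The fiber $F := f^{-1}(x)$ is $\gamma$-stable, and $\gamma$ acts on it without fixed points.

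The aim is to derive a contradiction from a topological or rational-connectedness property of $F$ that holds because $X$ is klt. Two routes are available.

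The first route uses Hacon--McKernan \cite{HM07}: fibers of a resolution over a klt point are rationally chain connected. We then look at the quotient $Y \to Y/\langle\gamma\rangle$, which is étale because the action is free, so $Y/\langle \gamma\rangle$ is smooth. It resolves $X/\langle \gamma \rangle$, which is again klt since finite quotients of klt varieties are klt. The fiber of $Y/\langle\gamma\rangle \to X/\langle\gamma\rangle$ over the image $x'$ of $x$ is $F/\langle \gamma\rangle$. This fiber is connected by Zariski's main theorem, and the étale cover $F \to F/\langle\gamma\rangle$ has degree $\operatorname{ord}(\gamma)>1$. Since $x$ is $\gamma$-fixed, $F$ is the full preimage of $F/\langle\gamma\rangle$, so this cover is a connected non-trivial étale cover.

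To get the contradiction we need $F/\langle \gamma\rangle$ to admit no connected non-trivial étale covers. The cleanest input is Takayama's theorem \cite{Tak03}*{Theorem 1.1}: for a resolution of a klt singularity, the fiber over the point is simply connected. More precisely, the inverse image of a small neighborhood of the point has trivial fundamental group. Apply it to $X/\langle\gamma\rangle$ with the resolution $Y/\langle\gamma\rangle$. Then the connected étale cover $f^{-1}(U) \to f^{-1}(U)/\langle \gamma\rangle$, where $U$ is a small analytic $\gamma$-stable neighborhood of $x$, must be trivial. But $f^{-1}(U)$ is connected, so the cover is a non-trivial connected cover, which is the desired contradiction.

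The second route avoids $\pi_1$. Rational chain connectedness of $F/\langle\gamma\rangle$, combined with Graber--Harris--Starr \cite{GHS03}, should force the finite étale cover to have a section over a rational curve. Iterating along chains would then split the cover, again contradicting connectedness.

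The main obstacle is justifying that the fundamental-group input applies to $Y/\langle\gamma\rangle \to X/\langle\gamma\rangle$ rather than to $Y \to X$. This requires that $X/\langle\gamma\rangle$ is klt, and that Takayama's statement covers arbitrary resolutions, or at least this particular one, in the neighborhood form. Both points hold in the cited literature, but they need careful checking.
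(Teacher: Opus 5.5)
Your first route is correct and is essentially the argument behind \cite{NS4}*{Theorem 3.10}, which the paper cites (the remark after Theorem \ref{thm:klt_local} notes that this proof rests on Takayama's theorem): freeness on an equivariant resolution makes $F \to F/\langle \gamma \rangle$ a connected covering of degree $\operatorname{ord}(\gamma) > 1$ of the fiber over $x'$ of the resolution $Y/\langle\gamma\rangle \to X/\langle\gamma\rangle$ of the klt variety $X/\langle\gamma\rangle$, and this contradicts \cite{Tak03}.

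Two small corrections. First, virtual freeness of the $\langle\gamma\rangle$-action follows from Proposition \ref{prop:vf_equiv}, not directly from Definition \ref{defi:vf}: a free $G$-equivariant resolution is also a free $\langle\gamma\rangle$-equivariant one, whereas an arbitrary $\langle\gamma\rangle$-equivariant model need not be $G$-equivariant. Second, your second route fails as sketched, because rational chain connectedness does not rule out connected nontrivial \'{e}tale covers (a cycle of rational curves has them). This is why the paper's Hacon--McKernan argument in Theorem \ref{thm:klt_local} works differently: it lifts the section $s_x$ of $X^{[\gamma]} \to T$ to a section of $Y^{[\gamma]} \to T$ and reads off a $\gamma$-fixed point in $f^{-1}(x)$.
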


The following proposition is also an important property of virtually free actions: it asserts that the minimal log discrepancy is preserved under the quotient, precisely as if the action were free.

\begin{prop}[\cite{NS4}*{Proposition 3.5}]\label{prop:eq}
Suppose that a finite group $G$ acts on a normal $\mathbb{Q}$-Gorenstein variety $X$. 
Suppose that the $G$-action on $X$ is virtually free. 
Let $\mathfrak{a}$ be a non-zero $\mathbb{R}$-ideal sheaf on $X/G$. 
Then, we have 
\[
\operatorname{mld}_{x}(X, \mathfrak{a}\mathcal{O}_X) = \operatorname{mld}_{x'}(X/G, \mathfrak{a})
\]
for any closed point $x \in X$ and its image $x'$ on the quotient variety $X/G$. 
\end{prop}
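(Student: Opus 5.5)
We plan to compare both minimal log discrepancies on a common $G$-equivariant model. The statement concerns a virtually free $G$-action on a normal $\mathbb{Q}$-Gorenstein $X$, with quotient $\pi\colon X \to X' := X/G$. First, we choose a $G$-equivariant resolution (or any $G$-equivariant proper birational model) $Y \to X$ from a smooth variety $Y$; this exists by equivariant resolution of singularities in characteristic zero. By Proposition \ref{prop:vf_equiv}(3), the $G$-action on $Y$ is free. Hence $Y' := Y/G$ is smooth, $Y \to Y'$ is étale, and $Y' \to X'$ is proper birational. Virtual freeness also implies freeness in codimension one, so $\pi$ is étale in codimension one. Therefore $K_X = \pi^*K_{X'}$, and $X'$ is $\mathbb{Q}$-Gorenstein because $K_{X'}$ is a norm/trace of $K_X$.

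Next, we compare log discrepancies divisor by divisor. We use the standard formula for finite morphisms that are étale in codimension one: if $E$ is a prime divisor over $X$ lying over $E'$ over $X'$ with ramification index $r_E$, then $a_E(X,\mathfrak{a}\mathcal{O}_X) = r_E\, a_{E'}(X',\mathfrak{a})$. The key point is that every divisor $E'$ over $X'$ appears on some model $Y'=Y/G$ as above; we can arrange this by taking $Y$ to be a $G$-equivariant resolution of the normalization of $X \times_{X'} W'$, where $W'$ is a model on which $E'$ appears. On such $Y$ the action is free, so $Y \to Y'$ is étale and $r_E = 1$ for every $E$ over $E'$. Consequently $a_E(X,\mathfrak{a}\mathcal{O}_X) = a_{E'}(X',\mathfrak{a})$. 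Centers correspond correctly: $c_{X'}(E') = \{x'\}$ iff $\pi(c_X(E)) = \{x'\}$, i.e. $c_X(E)$ is some point of the orbit $Gx$. Using the $G$-symmetry, for each $E'$ we can choose $E$ with $c_X(E) = \{x\}$, and conversely each such $E$ gives an $E'$ with center $x'$.

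Taking infima then gives equality of minimal log discrepancies. The negative/$-\infty$ convention in dimension $\ge 2$ is automatic, since both sides are $-\infty$ exactly when some divisor with the given center has negative discrepancy. In dimension one, $X$ and $X'$ are smooth curves, and $\pi$ is étale because the action is free. The main obstacle is the claim that every divisor over $X'$ with center $x'$ is realized on a quotient $Y/G$ of a smooth $G$-equivariant model. This needs care in constructing a $G$-equivariant normal model dominating the normalized fiber product and then resolving equivariantly, which is where Proposition \ref{prop:vf_equiv} is used to guarantee freeness and hence $r_E=1$.
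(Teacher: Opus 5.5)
Your argument is correct. The paper does not reprove this statement; it only quotes \cite{NS4}*{Proposition 3.5}. Your route is the standard argument behind it: realize every divisor over $X/G$, and via the extensions of its valuation every divisor over $X$ lying above it, on the quotient of a $G$-equivariant model on which the action is free, so that ramification indices are $1$ and $a_E(X,\mathfrak{a}\mathcal{O}_X)=a_{E'}(X/G,\mathfrak{a})$. One simplification is available: the equivariant resolution and Proposition \ref{prop:vf_equiv}(3) are not needed. On the normal model $\widetilde{W}$ (the normalization of $W'$ in $K(X)$, i.e.\ of the main component of $X\times_{X'}W'$), Definition \ref{defi:vf} already gives freeness in codimension one, and that is all $r_E=1$ requires.
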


\section{Virtually free actions and their characterizations}\label{section:vfc}

In this section, we fix a variety $X$ and a finite group $G$ of order $d$. 
Suppose that $G$ acts on $X$. 
We also fix a primitive $d$-th root of unity $\xi \in k$. 
For $d' \in \mathbb{Z}_{>0}$, we define 
\[
T := \operatorname{Spec}k[t], \qquad 
T_{1/d'} := \operatorname{Spec}k[t^{1/d'}]. 
\]

\begin{defi}\label{defi:twisted_quotient}
\begin{enumerate}
\item 
For an element $\gamma \in G$ of order $d'$, we define 
\[
X^{[\gamma]} := (X \times T_{1/d'}) / \langle \gamma \rangle, 
\]
where $\langle \gamma \rangle$ acts diagonally on $X \times T_{1/d'}$ such that the action of $\gamma$ on the coordinate ring of $T_{1/d'}$ is given by $t^{1/d'} \mapsto \xi^{d/d'} t^{1/d'}$. 

\item
Since the second projection $X \times T_{1/d'} \to T_{1/d'}$ is $\gamma$-equivariant and we have a natural identification $T_{1/d'} / \langle \gamma \rangle \cong T$, 
it induces a morphism 
\[
X^{[\gamma]} \to T. 
\]
Therefore, $X^{[\gamma]}$ naturally has the structure of a $k[t]$-scheme. 
We define $X^{[\gamma]}_{\infty}$ as the arc space of the $k[t]$-scheme $X^{[\gamma]}$ in the sense of Subsection \ref{subsection:arc2}. 
\end{enumerate}
\end{defi}

\begin{rmk}\label{rmk:twist}
\begin{enumerate}
\item 
The fiber of $X^{[\gamma]} \to T$ over any non-zero closed point $t = c \neq 0$ is isomorphic to $X$. 
The central fiber over $t = 0$ is generally a non-reduced scheme, and its reduced structure is isomorphic to the quotient variety $X / \langle \gamma \rangle$. 

\item
In particular, $X^{[\gamma]}$ satisfies the condition $(\star)_n$ in Remark \ref{rmk:star} with $n = \dim X$.

\item
Over $T \setminus \{0\}$, the base change $T_{1/d'} \setminus \{0\} \to T \setminus \{0\}$ is a Galois cover with Galois group $\langle \gamma \rangle$. 
Hence, the restriction of $X^{[\gamma]}$ over $T \setminus \{0\}$ coincides with the classical twist of $X$ by the $\langle \gamma \rangle$-torsor $T_{1/d'} \setminus \{0\} \to T \setminus \{0\}$. 
The scheme $X^{[\gamma]}$ can be viewed as a ``branched version'' of the twisted variety associated with the $G$-action on $X$. 
\end{enumerate}
\end{rmk}

We now discuss the basic properties of $X^{[\gamma]}$.

\begin{prop}\label{prop:bp}
For an element $\gamma \in G$ of order $d'$, the following assertions hold. 
\begin{enumerate}
\item 
There exists a natural finite surjective morphism 
\[
X \times T_{1/d'} \to X^{[\gamma]} \times_T T_{1/d'}, 
\]
which induces an isomorphism 
\[
X \times (T_{1/d'} \setminus \{ 0 \}) \xrightarrow{\sim} 
\bigl( X^{[\gamma]} |_{t \neq 0} \bigr) \times_{T \setminus \{ 0 \}} (T_{1/d'} \setminus \{ 0 \})
\]
over $T_{1/d'} \setminus \{ 0 \}$. 

\item 
There exists a one-to-one correspondence between 
the set of sections of $X^{[\gamma]} \to T$ and the set of $\gamma$-equivariant morphisms $T_{1/d'} \to X$. 

\item
Let $X^{[\gamma]}_{\infty}$ be the arc space of the $k[t]$-scheme $X^{[\gamma]}$. 
For any field extension $K$ of $k$, there is a natural bijection
\[
X^{[\gamma]}_{\infty}(K) \simeq \left\{ \operatorname{Spec} K[[t^{1/d'}]] \to X \;\middle|\; \text{\rm $\gamma$-equivariant} \right\}. 
\]
\end{enumerate}
\end{prop}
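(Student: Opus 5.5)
We work throughout with the natural quotient map $q \colon X \times T_{1/d'} \to X^{[\gamma]}$, where we write $s := t^{1/d'}$. The composite $X \times T_{1/d'} \to X^{[\gamma]} \to T$ sends $s \mapsto s^{d'} = t$. It therefore agrees with the composite $X \times T_{1/d'} \to T_{1/d'} \to T$, so the pair $(q, \mathrm{pr}_2)$ defines a morphism $\Phi \colon X \times T_{1/d'} \to X^{[\gamma]} \times_T T_{1/d'}$. This is the morphism for (1). Since $q$ is finite (as a quotient by a finite group) and $X^{[\gamma]} \times_T T_{1/d'} \to X^{[\gamma]}$ is separated, $\Phi$ is finite. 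Surjectivity is checked on points. A point of the fibre product over $(y, s_0)$ consists of $y = q(x, s_1)$ with $s_1^{d'} = s_0^{d'}$. Then $s_0 = \xi^{jd/d'} s_1$ for some $j$, and $\gamma^j (x, s_1)$ maps to $(y, s_0)$; the same argument works over residue field extensions.

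For the isomorphism over $s \neq 0$, we use that $\langle \gamma \rangle$ acts freely on $T_{1/d'} \setminus \{0\}$, so it also acts freely on $X \times (T_{1/d'}\setminus\{0\})$. Hence $q$ restricted there is a $\langle \gamma \rangle$-torsor over $X^{[\gamma]}|_{t\ne 0}$. The map $\mathrm{pr}_2$ is a $\langle\gamma\rangle$-equivariant morphism to the torsor $T_{1/d'}\setminus\{0\} \to T\setminus\{0\}$, and a morphism of $\langle\gamma\rangle$-torsors over a base is an isomorphism. Concretely, over an étale cover trivializing both torsors, $\Phi$ becomes $\langle\gamma\rangle \times U \to \langle\gamma\rangle\times U$ given by an equivariant map, hence bijective. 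Alternatively, we can do it affine-locally: on $X = \operatorname{Spec} R$ (after reducing to a $\gamma$-stable affine cover), compare $(R[s,s^{-1}])^{\gamma} \otimes_{k[t,t^{-1}]} k[s,s^{-1}]$ with $R[s,s^{-1}]$. Decomposing $R = \bigoplus_i R_i$ into $\gamma$-eigenspaces, we get $(R[s^{\pm1}])^\gamma = \bigoplus_i R_i s^{-i}k[t^{\pm1}]$ (with suitable sign conventions), and the multiplication map is visibly bijective. I expect this eigenspace bookkeeping to be the main technical point, but it is routine.

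For (2), a section $\sigma\colon T \to X^{[\gamma]}$ base changes to a section of $X^{[\gamma]}\times_T T_{1/d'} \to T_{1/d'}$. This section is $\gamma$-equivariant, where $\gamma$ acts on the fibre product via the second factor. Over $s\neq0$, the isomorphism of (1) produces a $\gamma$-equivariant morphism $T_{1/d'}\setminus\{0\}\to X$. Conversely, a $\gamma$-equivariant $\beta\colon T_{1/d'}\to X$ gives the equivariant graph $(\beta,\mathrm{id})\colon T_{1/d'}\to X\times T_{1/d'}$, which descends to $T = T_{1/d'}/\langle\gamma\rangle \to X^{[\gamma]}$ and is a section. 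To extend over $s=0$ in the first direction, we use the affine description. A section over $T$ amounts to a ring map $(R[s])^\gamma \to k[t]$ over $k[t]$, and we must show it comes from a unique equivariant $R \to k[s]$. For $f \in R_i$, the element $f s^{m}$ with $m \equiv -i \pmod{d'}$ and $0\le m<d'$ is invariant. We then set $\beta^*(f) := \sigma^*(f s^{m})/s^{m}$, computed in $k[s^{\pm1}]$. This agrees with the extension obtained from the $t\neq0$ part; its polynomiality follows because $\beta^*(f)^{d'}\, s^{md'}$ lies in the image, and integrality gives $\beta^*(f)\in k[s]$ since $k[s]$ is normal. Uniqueness follows from injectivity of restriction to $s\ne0$ (the ring $k[s]$ is a domain). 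A cleaner route is the categorical quotient property: equivariant maps from $T_{1/d'}$ to $X\times T_{1/d'}$ over $T_{1/d'}$ correspond to maps $T\to X^{[\gamma]}$ over $T$, because both sides are determined on the dense open part and extend by normality of $T_{1/d'}$ and finiteness of $q$.

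Part (3) is the same argument with $k[t]$ replaced by $K[[t]]$ and $T_{1/d'}$ by $\operatorname{Spec} K[[s]]$. The ring $K[[s]]$ is normal with invariant ring $K[[t]]$, and $\operatorname{Spec}K((s))\to\operatorname{Spec}K((t))$ is a $\langle\gamma\rangle$-torsor. The isomorphism of (1), base changed along $K((t))$, together with the integrality argument above, therefore yields the bijection. Naturality in $K$ is clear from the construction. Finally, $X^{[\gamma]}_\infty(K) = \operatorname{Hom}_{k[t]}(\operatorname{Spec}K[[t]], X^{[\gamma]})$, and the argument above applies verbatim to affine pieces, since $\operatorname{Spec} K[[t]]$ is local and maps into a single $\gamma$-stable affine.
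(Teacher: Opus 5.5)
Your proof is correct. Part (1) follows the paper's argument with more detail: the paper simply asserts finiteness, surjectivity, and the isomorphism away from $t=0$ from the freeness of the action on $T_{1/d'}\setminus\{0\}$, and your torsor or eigenspace argument justifies this. For (2) and (3) the skeleton also matches the paper: descend the equivariant graph in one direction; in the other, base change the section, invert $q$ over $t^{1/d'}\neq 0$ via (1), and extend across the origin.

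The extension step is where you differ. The paper applies the valuative criterion of properness to the finite map $q\colon X\times T_{1/d'}\to X^{[\gamma]}\times_T T_{1/d'}$. This lifts the base-changed section $\sigma'$ uniquely to $\widetilde{f}$. Equivariance then comes for free, since $\gamma^{-1}\circ\widetilde{f}\circ\gamma$ is another lift of $\sigma'$. That argument is coordinate-free, needs no affine reduction, and carries over verbatim to $K[[t]]$ for (3). Your main route instead works on $\gamma$-eigenvectors: you write the lift explicitly as $\beta^*(f)=\sigma^*(fs^m)/s^m$ and use normality of $k[s]$ (resp.\ $K[[s]]$). This gives an explicit formula for the equivariant arc, close to the linear-action computation in Section~\ref{section:dl2}, at the cost of eigenspace bookkeeping. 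The ``cleaner route'' you mention at the end of (2) is exactly the paper's proof.

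Two details in your explicit route need tightening.

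\begin{itemize}
\item The integrality step, as written, is not enough. Knowing that $\beta^*(f)^{d'}s^{md'}=\sigma^*(fs^m)^{d'}$ lies in $k[t]$ only gives $\beta^*(f)^{d'}\in t^{-m}k[t]$, which still allows a pole at $s=0$. The missing ingredient is $k[t]$-linearity of $\sigma^*$. Since $(fs^m)^{d'}=t^m f^{d'}$ with $f^{d'}\in R^{\langle\gamma\rangle}$, you get $\sigma^*(fs^m)^{d'}=t^m\sigma^*(f^{d'})$. Hence $\beta^*(f)^{d'}=\sigma^*(f^{d'})\in k[t]$, and only then does normality of $k[s]$ give $\beta^*(f)\in k[s]$.
\item In (2), a section $T\to X^{[\gamma]}$ need not land in a single $U^{[\gamma]}$ with $U\subset X$ a $\gamma$-stable affine. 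The affine description should therefore be applied on a neighbourhood of $0\in T$ that maps into such a $U^{[\gamma]}$. This is harmless, because the extension problem is local at the origin. In (3) it is automatic, since $\operatorname{Spec}K[[t]]$ is local, as you observe.
\end{itemize}
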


\begin{proof}
(1) The following two compositions coincide:
\[
X \times T_{1/d'} \to X^{[\gamma]} = (X \times T_{1/d'}) / \langle \gamma \rangle \to T \quad \text{and} \quad 
X \times T_{1/d'} \xrightarrow{p_2} T_{1/d'} \to T. 
\]
Therefore, by the universal property of fiber products, we obtain a morphism 
\[
X \times T_{1/d'} \to X^{[\gamma]} \times_T T_{1/d'}, \quad (x,s) \mapsto \bigl( \overline{(x,s)}, s \bigr).
\]
Since $X^{[\gamma]}$ is the quotient by a finite group, this induced morphism is finite and surjective. The fact that it is an isomorphism outside $t=0$ follows because the $\langle \gamma \rangle$-action on $T_{1/d'}$ is free away from the origin. 

(2) For a $\gamma$-equivariant morphism $f \colon T_{1/d'} \to X$, the map 
\[
T_{1/d'} \to X \times T_{1/d'}, \quad s \mapsto (f(s), s)
\]
is $\gamma$-equivariant, and hence it descends to a morphism $T \to X^{[\gamma]}$, which is a section of $X^{[\gamma]} \to T$. 

Conversely, given a section $\sigma \colon T \to X^{[\gamma]}$ of $X^{[\gamma]} \to T$, the base change induces a section
\[
T_{1/d'} \cong T \times_T T_{1/d'} \xrightarrow{\sigma'} X^{[\gamma]} \times_T T_{1/d'}
\]
of the projection $X^{[\gamma]} \times_T T_{1/d'} \to T_{1/d'}$. 
Since the morphism $q \colon X \times T_{1/d'} \to X^{[\gamma]} \times_T T_{1/d'}$ is finite (in particular, proper) and an isomorphism over $T_{1/d'} \setminus \{ 0 \}$, the valuative criterion of properness guarantees that this section lifts uniquely to a section
\[
\widetilde{f} \colon T_{1/d'} \to X \times T_{1/d'}
\]
of the projection $X \times T_{1/d'} \to T_{1/d'}$. 
\[
\xymatrix{
& X \times T_{1/d'} \ar[d]^{q} \\
T_{1/d'} \ar[ur]^{\widetilde{f}} \ar[r]_-{\sigma'} \ar[d] & X^{[\gamma]} \times_T T_{1/d'} \ar[d] \\
T \ar[r]^-{\sigma} & X^{[\gamma]}
}
\]
This $\widetilde{f}$ is necessarily a $\gamma$-equivariant map. 
Indeed, since $\sigma'$ and $q$ are $\gamma$-equivariant, the conjugated morphism $\gamma^{-1} \circ \widetilde{f} \circ \gamma$ is also a lift of the section $\sigma'$, which must coincide with $\widetilde{f}$ by uniqueness. 
Therefore, $\widetilde{f}$ yields a $\gamma$-equivariant morphism $T_{1/d'} \to X$.

It is straightforward to check that these two correspondences are mutually inverse.

(3) The assertion follows from the exact same argument as in the proof of (2), simply by replacing $T$ and $T_{1/d'}$ with $\operatorname{Spec} K[[t]]$ and $\operatorname{Spec} K[[t^{1/d'}]]$, respectively. 
\end{proof}

\begin{cor}\label{cor:empty}
For an element $\gamma \in G$ of order $d'$, 
the following conditions are equivalent:
\begin{enumerate}
\item
The morphism $X^{[\gamma]} \to T$ has a section. 

\item
There exists a $\gamma$-equivariant morphism $T_{1/d'} \to X$. 

\item
$X^{[\gamma]} _{\infty} \neq \emptyset$. 

\item
$X$ has a $\gamma$-fixed closed point. 
\end{enumerate}
\end{cor}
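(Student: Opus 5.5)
The plan is to establish the cycle of implications (1)$\Leftrightarrow$(2)$\Rightarrow$(3)$\Rightarrow$(4)$\Rightarrow$(2). The equivalence (1)$\Leftrightarrow$(2) is exactly Proposition \ref{prop:bp}(2), so nothing further is needed there.

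For (2)$\Rightarrow$(3), I start from a $\gamma$-equivariant morphism $f\colon T_{1/d'} \to X$. Composing with the $\gamma$-equivariant morphism $\operatorname{Spec} k[[t^{1/d'}]] \to T_{1/d'}$ gives a $\gamma$-equivariant morphism $\operatorname{Spec} k[[t^{1/d'}]] \to X$. By Proposition \ref{prop:bp}(3) with $K=k$, this is a $k$-point of $X^{[\gamma]}_\infty$, so the arc space is nonempty.

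For (3)$\Rightarrow$(4), I take a point of $X^{[\gamma]}_\infty$. Its residue field $K$ gives, by Proposition \ref{prop:bp}(3), a $\gamma$-equivariant morphism $\alpha\colon \operatorname{Spec} K[[t^{1/d'}]] \to X$. The closed point of $\operatorname{Spec} K[[t^{1/d'}]]$ is fixed by $\gamma$, since $\gamma$ acts by $t^{1/d'} \mapsto \xi^{d/d'}t^{1/d'}$ and preserves the maximal ideal. By equivariance, its image $p \in X$ is therefore a $\gamma$-fixed point, possibly non-closed. The $\gamma$-fixed locus $X^\gamma$ is a closed subset of $X$: $X$ is separated, and $X^\gamma$ is the preimage of the diagonal under $(\mathrm{id},\gamma)$. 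Since $X^\gamma$ contains $p$, it is a nonempty closed subset of a scheme of finite type over $k$. It therefore contains a closed point, and this point is $\gamma$-fixed. A small point needs care here: $p$ being $\gamma$-fixed as a scheme point should be taken scheme-theoretically. This holds because $\alpha$ restricted to the closed point factors through $X^\gamma$, as the two maps $\alpha$ and $\gamma\circ\alpha$ agree there.

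For (4)$\Rightarrow$(2), let $x$ be a $\gamma$-fixed closed point. The constant morphism $T_{1/d'} \to \operatorname{Spec} k \xrightarrow{x} X$ is $\gamma$-equivariant, because $\gamma$ fixes $x$.

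No step is a real obstacle. The only mildly delicate point is in (3)$\Rightarrow$(4): passing from a fixed point over an extension field $K$ to a fixed closed point. This is handled by closedness of the fixed locus together with the Nullstellensatz.
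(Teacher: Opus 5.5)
Your proposal is correct and follows essentially the same route as the paper: (1)$\Leftrightarrow$(2) comes from Proposition~\ref{prop:bp}(2), the center of a $\gamma$-equivariant arc is $\gamma$-fixed, the constant arc at a $\gamma$-fixed point is $\gamma$-equivariant, and Proposition~\ref{prop:bp}(3) transfers these facts to $X^{[\gamma]}_\infty$. The only difference is that you explicitly handle arcs over a field extension $K$, whose center may be non-closed, by noting that $\alpha|_{\operatorname{Spec} K}$ factors through the closed scheme-theoretic fixed locus $X^\gamma$ and then taking a closed point of it; this fills in a detail the paper leaves implicit when it says the same argument applies with $\operatorname{Spec} k[[t^{1/d'}]]$.
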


\begin{proof}
By Proposition \ref{prop:bp}(2), (1) and (2) are equivalent. 

Next, we prove the equivalence (2) $\Leftrightarrow$ (4). 
If $f \colon T_{1/d'} \to X$ is a $\gamma$-equivariant morphism, the point $f(0)$ is necessarily a $\gamma$-fixed point. 
Conversely, for a $\gamma$-fixed point $x \in X$, the constant morphism $f \colon T_{1/d'} \to X$ factoring through $x$ is naturally a $\gamma$-equivariant morphism.

By Proposition \ref{prop:bp}(3), the equivalence (3) $\Leftrightarrow$ (4) is proved in the same way as (2) $\Leftrightarrow$ (4) by replacing $T_{1/d'}$ with $\operatorname{Spec} k[[t^{1/d'}]]$. 
\end{proof}

\begin{prop}\label{prop:vf_thin}
Suppose that a finite group $G$ acts on a normal variety $X$. 
Then, the following conditions are equivalent. 
\begin{enumerate}
\item 
The $G$-action on $X$ is virtually free. 

\item
For any $\gamma \in G \setminus \{ 1_G \}$, $X^{[\gamma]}_{\infty}$ is a thin set of $X^{[\gamma]}_{\infty}$ itself. 
\end{enumerate}
\end{prop}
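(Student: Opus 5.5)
The plan is to use Proposition~\ref{prop:bp}(3) to turn both conditions into statements about $\gamma$-equivariant arcs $\operatorname{Spec} K[[s]] \to X$, where $s = t^{1/d'}$. I will also use the functoriality of the construction $X \mapsto X^{[\gamma]}$. Write $n = \dim X$.

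First I record the meaning of thinness here. If $Z \subset X^{[\gamma]}$ is closed with $\dim Z \le n$, then $Z$ meets the generic fiber of $X^{[\gamma]} \to T$ in a proper closed subset, since that fiber has dimension $n$ and $X^{[\gamma]}$ satisfies $(\star)_n$ by Remark~\ref{rmk:twist}. Conversely, let $F \subset X$ be a proper closed $G$-stable subset. Then the image $Z_F$ of $F \times T_{1/d'}$ in $X^{[\gamma]}$ has dimension $\le n$. A $K$-arc of $X^{[\gamma]}$ lies in $(Z_F)_\infty$ exactly when the corresponding $\gamma$-equivariant arc $\operatorname{Spec} K[[s]] \to X$ factors through $F$; this follows from Proposition~\ref{prop:bp}(1) and (3).

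For (1)$\Rightarrow$(2), I invoke Proposition~\ref{prop:vf_equiv} to pick a $G$-equivariant resolution $f\colon Y \to X$ with $G$ acting freely on $Y$. I let $U \subset X$ be a $G$-stable dense open set over which $f$ is an isomorphism, and set $F := X \setminus U$. Take an arc of $X^{[\gamma]}$ with $\gamma \neq 1$, and let $\alpha\colon \operatorname{Spec} K[[s]] \to X$ be the corresponding $\gamma$-equivariant arc. Suppose the generic point of $\alpha$ lies in $U$. Since $f$ is proper, the valuative criterion lifts $\alpha$ uniquely to $\widetilde\alpha\colon \operatorname{Spec} K[[s]] \to Y$. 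The arc $\gamma^{-1}\circ\widetilde\alpha\circ\gamma$ is another lift of $\alpha$, so by uniqueness $\widetilde\alpha$ is $\gamma$-equivariant. Then $\widetilde\alpha(0)$ is a $\gamma$-fixed point of $Y$, which is impossible because the action on $Y$ is free; equivalently $Y^{[\gamma]}_\infty = \emptyset$ by Corollary~\ref{cor:empty}. Hence every arc lies in $(Z_F)_\infty$ with $\dim Z_F \le n$, so $X^{[\gamma]}_\infty$ is thin.

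For (2)$\Rightarrow$(1), suppose the action is not virtually free. Then there is a $G$-equivariant proper birational $Y \to X$ with $Y$ normal and a prime divisor $E \subset Y$ fixed pointwise by some $\gamma \neq 1$; indeed, the stabilizer of a general point of $E$ is nontrivial and constant along $E$. I take a general point $y \in E$ at which $Y$ and $E$ are smooth. By linearization at the fixed point $y$ (characteristic $0$, finite group), there are formal coordinates $(e_1,\dots,e_{n-1},z)$ at $y$ in which $\gamma$ acts trivially on the $e_i$ and by $z \mapsto \zeta z$, where $\zeta \ne 1$ is a $d'$-th root of unity. I choose $a \in \{1,\dots,d'-1\}$ with $\zeta = \xi^{(d/d')a}$. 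Then every arc
\[
e_i = g_i(t), \qquad z = s^{a} u(t), \qquad g_i \in K[[t]],\ g_i(0)=0,\ u \in K[[t]],
\]
is $\gamma$-equivariant, so it gives a $K$-arc of $Y^{[\gamma]}$ and hence, after composing with $Y \to X$, of $X^{[\gamma]}$.

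The main obstacle is showing that this family is not contained in $Z_\infty$ for any $Z \subset X^{[\gamma]}$ with $\dim Z \le n$. Over $t \neq 0$, $Y^{[\gamma]} \to X^{[\gamma]}$ is birational, so $Z$ pulls back to a proper closed subset of the generic fiber of $Y^{[\gamma]}$. Near our arcs, that generic fiber has the $\gamma$-invariant coordinates $(e_i,\, z s^{-a})$ over $K((t))$. A proper closed subset is therefore cut out by a nonzero function in these coordinates, defined over a finitely generated field. Choosing $K$ large and the coefficients of the $g_i$ and $u$ algebraically independent over that field makes the arc avoid it. The points needing care are expressing $z s^{-a}$ as a genuine coordinate on the twist and working with formal rather than algebraic coordinates. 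This contradicts (2).
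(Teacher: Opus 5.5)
Your proof is correct. For (1)$\Rightarrow$(2) it is the paper's argument, run directly rather than by contraposition. You lift an equivariant arc through a free model, use uniqueness of the lift to get $\gamma$-equivariance, and obtain a $\gamma$-fixed point, contradicting freeness on $Y$.

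For (2)$\Rightarrow$(1) you take a genuinely different route. The paper passes to a smooth equivariant resolution and takes any $\gamma$-fixed point there, not necessarily on a fixed divisor. It then gets $Y^{[\gamma]}_\infty\neq\emptyset$ from Corollary~\ref{cor:empty} and invokes the non-thinness criterion Lemma~\ref{lem:2.27'} borrowed from \cite{NS22}. You instead stay on the normal model supplied by Definition~\ref{defi:vf}, linearize at a general point of a pointwise-fixed divisor, and write down an explicit family of equivariant arcs that no thin set can contain. Your route is self-contained and more elementary: in this direction it needs neither a resolution nor a black-box lemma. However, it uses the codimension-one structure of the fixed locus essentially. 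The paper's lemma works at an arbitrary fixed point and for any cylinder with nonempty preimage, and that is the form reused in Theorem~\ref{thm:vf_thin_local} and Theorem~\ref{thm:5.2}.

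The one step you should actually write out is the final genericity claim. The function cutting out the pullback of $Z$ is a formal power series in $(e,z)$, so algebraic independence of the arc coefficients does not by itself rule out an identity. It is cleaner to avoid the twist altogether, as follows.
\begin{itemize}
\item The preimage of $Z$ in $Y\times T_{1/d'}$ is a proper closed subset. So some nonzero $h\in\mathcal{O}(V)[s]$, with $V\ni y$ affine, vanishes on it.
\item Expand $h=\sum h_{\alpha m j}e^{\alpha}z^{m}s^{j}$ in $k[[e,z]][s]$. Evaluate it on the $k$-arcs $e_i=c_i t$, $z=c_0 s^{a}$ from your family.
\item The coefficient of $s^N$ is $\sum_{d'|\alpha|+am+j=N}h_{\alpha m j}c^{\alpha}c_0^{m}$. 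Since $j$ is determined by $(\alpha,m)$, these are distinct monomials.
\item Hence for a suitable $N$ this is a nonzero polynomial, and a general $(c,c_0)\in k^{n}$ gives an arc outside $Z_\infty$.
\end{itemize}
In particular, no large field $K$ and no coordinate $zs^{-a}$ on the twist are needed.
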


\begin{proof}
We shall prove the implication (1) $\Rightarrow$ (2). 
Suppose that there exists $\gamma \in G \setminus \{ 1_G \}$ such that $X^{[\gamma]}_{\infty}$ is not a thin set of $X^{[\gamma]}_{\infty}$ itself. 
Let $f \colon Y \to X$ be an arbitrary $G$-equivariant resolution, 
and let $Z \subset X$ be a $G$-invariant closed subset such that $f$ is an isomorphism outside $Z$. 
Then, we can identify $Z^{[\gamma]} := (Z \times T_{1/d'}) / \langle \gamma \rangle$ with a closed subset of $X^{[\gamma]}$, 
and hence, $Z^{[\gamma]}_{\infty}$ with a closed subset of $X^{[\gamma]}_{\infty}$. 
Since $Z^{[\gamma]}_{\infty}$ is a thin set by definition, but $X^{[\gamma]}_{\infty}$ is not a thin set of itself, there exists an arc $\alpha \in X^{[\gamma]}_{\infty} \setminus Z^{[\gamma]}_{\infty}$. 
Since the induced proper morphism $Y^{[\gamma]} \to X^{[\gamma]}$ is an isomorphism over $X^{[\gamma]} \setminus Z^{[\gamma]}$, the valuative criterion of properness guarantees that the arc $\alpha$ lifts to an arc in $Y^{[\gamma]}_{\infty}$. 
Consequently, it follows that $Y^{[\gamma]}_{\infty} \neq \emptyset$. 
By Corollary \ref{cor:empty}, $Y$ has a $\gamma$-fixed closed point. 
Since $f \colon Y \to X$ was an arbitrary resolution, the $G$-action on $X$ is not virtually free by the equivalent condition for a virtually free action in Proposition \ref{prop:vf_equiv}. 

We shall prove the implication (2) $\Rightarrow$ (1). 
Suppose that the $G$-action on $X$ is not virtually free. 
Let $f \colon Y \to X$ be a $G$-equivariant resolution. 
By the equivalent condition for a virtually free action in Proposition \ref{prop:vf_equiv}, 
there exist $\gamma \in G \setminus \{ 1_G \}$ and a closed point $y \in Y$ such that $y$ is a $\gamma$-fixed point. 
Thus, by Corollary \ref{cor:empty}, $Y^{[\gamma]}_{\infty} \neq \emptyset$. 
Finally, by applying Lemma \ref{lem:2.27'} below to the $k[t]$-morphism $Y^{[\gamma]} \to X^{[\gamma]}$, we conclude that $X^{[\gamma]}_{\infty}$ is not a thin set of $X^{[\gamma]}_{\infty}$ itself. 
\end{proof}

\begin{lem}[cf.\ \cite{NS22}*{Lemma 2.27}]\label{lem:2.27'}
Let $f \colon W \to X$ be a proper birational $k[t]$-morphism of $k[t]$-varieties $W$ and $X$. 
Suppose that $W |_{t \not = 0}$ is smooth over $k$. 
If $C \subset X_{\infty}$ is a cylinder satisfying $f_{\infty}^{-1} (C) \not = \emptyset$, 
then $C$ is not a thin set of $X_{\infty}$. 
\end{lem}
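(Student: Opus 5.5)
Since $C$ is a cylinder, I will not work with thinness directly. I will instead use the relative Jacobian criterion of Remark \ref{rmk:thin_cylinder_kt}(1): $C$ is thin if and only if $C \subset W'_{\infty}$, where $W' \subset X$ is the closed subscheme defined by $\operatorname{Jac}_{X/k[t]}$. So I assume for contradiction that $C$ is thin, i.e.\ $C \subset (W')_{\infty}$. I then pick an arc $\beta \in f_\infty^{-1}(C) \subset W_\infty$ and show that nearby arcs give a contradiction through an order-of-Jacobian computation.

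The key steps are as follows.

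\emph{Step 1: reduce to arcs off the special locus.} Because $f_\infty^{-1}(C)$ is a nonempty cylinder in $W_\infty$, I want to find an arc $\beta$ in it that is not contained in $(W|_{t=0})_\infty$, and more strongly is not contained in the exceptional locus $E$ of $f$. Arcs of $W_\infty$ are morphisms $\operatorname{Spec} K[[t]] \to W$ over $k[t]$. Such an arc has generic point lying over $t \neq 0$ automatically, since its composite to $T$ is the canonical map. Hence the generic point lands in $W|_{t\ne0}$, which is smooth over $k$. Since $W|_{t\neq 0}$ is smooth, arcs in a nonempty cylinder can be deformed. Concretely, a cylinder in the arc space of a $k[t]$-variety that is smooth over $t\neq 0$ is not contained in the arc space of any proper closed subset $E$. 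I would prove this by the standard Hensel/Greenberg argument: the truncation $\psi_m(f_\infty^{-1}(C))$ contains a jet at which $W$ is smooth over $k[t]$ up to a bounded Jacobian order, so it lifts to many arcs. So I choose $\beta \notin E_\infty$ with $\operatorname{ord}_\beta(\operatorname{Jac}_{W/k[t]}) < \infty$.

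\emph{Step 2: transport to $X$.} Set $\alpha = f_\infty(\beta) \in C$. Since $f$ is an isomorphism off $E$ and the generic point of $\beta$ lies off $E$, the generic point of $\alpha$ lies in the open set where $X \cong W$. Over $t\ne0$ this set is smooth, so the Jacobian ideal of $X$ is nonzero at the generic point of $\alpha$. That gives $\operatorname{ord}_\alpha(\operatorname{Jac}_{X/k[t]})<\infty$, i.e.\ $\alpha \notin W'_\infty$, which contradicts $C \subset W'_\infty$.

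The main obstacle is Step 1. There I must ensure that $f_\infty^{-1}(C)$ is not swallowed by $E_\infty$ or by the Jacobian locus of $W$. Smoothness is assumed only on $W|_{t\neq0}$, and arcs may hit singular points at $t=0$. I would handle this by invoking \cite{NS22}*{Lemma 2.27} and its proof. The essential input is that, for a $k[t]$-variety satisfying $(\star)_n$, a nonempty cylinder is never thin when the Jacobian locus is supported over $t=0$ and in proper closed subsets. Arcs through $t=0$ fibers still have generic point in $t\ne 0$, so such a nonempty cylinder contains arcs of finite Jacobian order, which is the Greenberg-approximation input.
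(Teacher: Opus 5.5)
Your argument is sound, but it is organized differently from the paper's and includes a detour. The paper defers to the proof of \cite{NS22}*{Lemma 2.27}, whose only input on $W$ is smoothness over $k[t]$ away from finitely many points. That argument runs on $W$. If $C \subset Z_\infty$ with $Z \subsetneq X$, then $f_\infty^{-1}(C) \subset (f^{-1}(Z))_\infty$, and $f^{-1}(Z) \subsetneq W$ because $f$ is surjective. So $f_\infty^{-1}(C)$ is a nonempty thin cylinder of $W_\infty$. The Jacobian criterion (Remark \ref{rmk:thin_cylinder_kt}(1)), applied to $W$, then puts it inside the arc space of $V(\operatorname{Jac}_{W/k[t]})$. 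That arc space is empty, because this locus lies over finitely many points of $T$.

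You apply the criterion on $X$ instead, and this is what forces your Step 1. But Step 1 (a nonempty cylinder of $W_\infty$ avoids $E_\infty$ for every proper closed $E$) is already the whole content of the lemma. Once you have it, Step 2 is superfluous: take $E = f^{-1}(Z)$. Step 1 also needs neither a Hensel/Greenberg lifting argument nor an appeal to the smooth case of \cite{NS22}*{Lemma 2.27}. It follows in one line from the same Remark \ref{rmk:thin_cylinder_kt}(1) you already use, applied to $W$, together with your own observation that every arc of $W$ has finite $\operatorname{Jac}_{W/k[t]}$-order.

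Two statements should be corrected along the way.

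First, the relative Jacobian locus of $W$ need not be supported over $t=0$. The function $t$ can have critical points on $W|_{t\neq 0}$, e.g.\ $W = \{x^2 = t-1\}$. What is true, by generic smoothness in characteristic zero, is only that this locus does not dominate $T$, and that suffices.

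Second, and for the same reason, smoothness over $k$ at a point with $t \neq 0$ does not by itself make $\operatorname{Jac}_{X/k[t]}$ nonvanishing there. What you actually use is that the generic point of an arc maps to a point over the generic point of $T$, since $k[t] \to K((t))$ is injective. At such a point, regularity gives smoothness over $k(t)$ in characteristic zero. Alternatively, argue directly along the arc. If the generic point of $\beta$ lands where the variety is smooth over $k$ with local coordinates $y_i$, apply the $k$-derivation $g \mapsto \frac{d}{dt}\beta^*(g)$ to $dt = \sum_i (\partial t/\partial y_i)\,dy_i$. This gives $1 = \sum_i \beta^*(\partial t/\partial y_i)\,\frac{d}{dt}\beta^*(y_i)$ in $K((t))$. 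Hence the local generators $\partial t/\partial y_i$ of the relative Jacobian ideal cannot all vanish along $\beta$.
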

\begin{proof}
This can be proved in the same way as \cite{NS22}*{Lemma 2.27}. 
Although \cite{NS22}*{Lemma 2.27} assumes that the whole space $W$ is smooth, its proof only uses the fact that $W$ is smooth over $\operatorname{Spec} k[t]$ outside finitely many points. 
Therefore, the exact same argument applies here. 
\end{proof}

\section{Arc spaces of quotient varieties: general actions}\label{section:dl1}

In this section, we discuss the relationship between the arc space $X'_{\infty}$ of the quotient variety $X' = X/G$ and 
the arc space $X^{[\gamma]}_{\infty}$ introduced in Section \ref{section:vfc}.

\begin{defi}\label{defi:dl1}
Let $X$ be a variety over $k$, and let $G$ be a finite group of order $d$ acting on $X$. 
Let $X' = X /G$ be the quotient variety. 
Let $\xi$ be a primitive $d$-th root of unity.
\begin{enumerate}
\item 
For $\gamma \in G$, we define a subset ${X'}^{(\gamma)} _{\infty} \subset X'_{\infty}$ whose set of $K$-arcs, for any field extension $K$ of $k$, is given by 
\[
{X'}^{(\gamma)} _{\infty}(K) := \left\{ \varphi \in X'_{\infty}(K) \;\middle|\; 
\begin{aligned} 
&\text{$\varphi$ lifts to a $K$-arc $\varphi ' \in X_{\infty} ^{1/d}(K)$} \\ 
&\text{satisfying $\varphi' (\xi t^{1/d}) = \gamma \varphi' (t^{1/d})$} 
\end{aligned} 
\right\}, 
\]
where $X_{\infty} ^{1/d}(K) := \operatorname{Hom}_k \bigl( \operatorname{Spec} K[[t^{1/d}]], X \bigr)$.

\item
Let $\gamma \in G$ be an element of order $d'$. 
Then, the $\gamma$-equivariant morphism
\[
X \times T_{1/d'} \to X' \times T_{1/d'} ; \quad (x,s) \mapsto (\overline{x}, s)
\]
induces a $k[t]$-morphism
\[
h \colon X^{[\gamma]} \to X' \times T, 
\]
which in turn induces a morphism on the arc spaces
\[
h_{\infty} \colon X^{[\gamma]}_{\infty} \to X' _{\infty}
\]
(cf.\ Remark \ref{rmk:bc}).
\end{enumerate}
\end{defi}

\begin{rmk}\label{rmk:dl}
The subset ${X'}^{(\gamma)} _{\infty}$ was introduced by Denef and Loeser in \cite{DL02} to study $X'_{\infty}$ via $X^{(\gamma)}_{\infty}$, which is introduced in Section \ref{section:dl2}. 
By \cite{DL02}*{Section 2.1} (cf.\ \cite{Yas16}*{Section 3}, \cite{NS22}*{Proposition 3.4}), we have
\[
X' _{\infty} \setminus Z' _{\infty} = \bigsqcup _{[\gamma] \in \operatorname{Conj}(G)} 
\bigl( {X'}^{(\gamma)} _{\infty} \setminus Z' _{\infty} \bigr), 
\]
where $Z' \subset X'$ is the minimal closed subset such that the quotient map $X \to X'$ is \'{e}tale outside $Z'$. 
\end{rmk}

\begin{lem}\label{lem:image_arc}
In the setting of Definition \ref{defi:dl1}, the image of $h_{\infty} \colon X^{[\gamma]}_{\infty} \to X' _{\infty}$ coincides with ${X'}^{(\gamma)} _{\infty}$. 
\end{lem}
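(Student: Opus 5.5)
Both sets will be described through their $K$-points, and the proof reduces to comparing two descriptions of lifted arcs. By Proposition \ref{prop:bp}(3), a $K$-arc of $X^{[\gamma]}_{\infty}$ is the same as a $\gamma$-equivariant morphism $\beta\colon \operatorname{Spec} K[[t^{1/d'}]] \to X$. Here $\gamma$ acts on $t^{1/d'}$ by $t^{1/d'} \mapsto \xi^{d/d'} t^{1/d'}$, and equivariance means $\beta(\xi^{d/d'} t^{1/d'}) = \gamma \beta(t^{1/d'})$.

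\textbf{Computing $h_\infty$.} Unwinding the construction of $h$ from the equivariant map $X\times T_{1/d'}\to X'\times T_{1/d'}$, I expect $h_\infty(\beta)$ to be the composite $\operatorname{Spec}K[[t^{1/d'}]]\to X\to X'$. This composite is $\langle\gamma\rangle$-invariant, so it descends to $\operatorname{Spec}K[[t]]\to X'$. To justify the descent, note that the composite is invariant and that $K[[t^{1/d'}]]^{\langle\gamma\rangle}=K[[t]]$. Compatibility with the diagram of Proposition \ref{prop:bp}(1),(3) then identifies this descended arc with $h\circ\sigma$ for the corresponding section $\sigma$.

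\textbf{Change of variables between $t^{1/d'}$ and $t^{1/d}$.} Let $e=d/d'$.
\begin{itemize}
\item[($\subseteq$)] Given an equivariant $\beta$, set $\varphi'(t^{1/d}) := \beta\bigl((t^{1/d})^{e}\bigr)$, i.e.\ compose $\beta$ with $K[[t^{1/d'}]]\hookrightarrow K[[t^{1/d}]]$. Then
\[
\varphi'(\xi t^{1/d})=\beta\bigl(\xi^{e}t^{1/d'}\bigr)=\gamma\beta(t^{1/d'})=\gamma\varphi'(t^{1/d}).
\]
Moreover $\varphi'$ lifts $h_\infty(\beta)$, so the image of $h_\infty$ is contained in ${X'}^{(\gamma)}_\infty$.
\item[($\supseteq$)] Given $\varphi'\colon \operatorname{Spec}K[[t^{1/d}]]\to X$ with $\varphi'(\xi t^{1/d})=\gamma\varphi'(t^{1/d})$ lifting $\varphi$, I must show that $\varphi'$ factors through $K[[t^{1/d'}]]$. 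The subgroup generated by $\xi^{d'}$, of order $e$, acts on $t^{1/d}$, and $\xi^{d'}$ corresponds to $\gamma^{d'}=1$. Hence $\varphi'$ is invariant under $t^{1/d}\mapsto \xi^{d'}t^{1/d}$.

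To conclude, I will argue at the level of rings on an affine $\gamma$-stable open neighbourhood (which exists since $X\to X'$ is affine and quotients are formed locally). The pullback $\varphi'^*$ then lands in the invariants $K[[t^{1/d}]]^{\mu_e}=K[[t^{1/d'}]]$. Therefore $\varphi'=\beta\circ(\text{inclusion})$ with $\beta$ $\gamma$-equivariant, and $h_\infty(\beta)=\varphi$ by the computation of $h_\infty$ above.
\end{itemize}

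\textbf{Points versus sets.} Since both ${X'}^{(\gamma)}_\infty$ and the image of $h_\infty$ are defined or detected by $K$-points for all field extensions $K$, the equality of $K$-points gives equality of sets. A point $x$ of $X^{[\gamma]}_\infty$ maps to $h_\infty(x)$, whose $\kappa$-arc lifts as shown. Conversely, a point of ${X'}^{(\gamma)}_\infty$ with some $K$-arc lifting is hit by a $K$-point of $X^{[\gamma]}_\infty$.

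The main technical point is the factorization through $K[[t^{1/d'}]]$, together with checking that the arc $\operatorname{Spec}K[[t^{1/d'}]]\to X$ lands in an affine invariant open. The latter holds because the closed point maps to a $\gamma$-fixed point, whose orbit, a single point, lies in an affine $G$-stable open.
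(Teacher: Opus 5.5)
Your proposal is correct and follows the paper's proof essentially step for step. It reduces via Proposition \ref{prop:bp}(3) to comparing $\gamma$-equivariant arcs $\operatorname{Spec} K[[t^{1/d'}]] \to X$ with arcs $\varphi'$ satisfying $\varphi'(\xi t^{1/d}) = \gamma \varphi'(t^{1/d})$, composes with $\operatorname{Spec} K[[t^{1/d}]] \to \operatorname{Spec} K[[t^{1/d'}]]$ in one direction, and descends in the other using invariance under $t^{1/d} \mapsto \xi^{d'} t^{1/d}$ (which comes from $\gamma^{d'}=1$); your extra details (the explicit description of $h_\infty$, the reduction to an affine $G$-stable open to justify the descent, and the points-versus-$K$-arcs remark) are sound and only make explicit what the paper leaves implicit.
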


\begin{proof}
Let $K \supseteq k$ be a field extension. 
Let $\varphi \in X'_{\infty}(K)$ be a $K$-arc. 
By Proposition \ref{prop:bp}(3), $\varphi \in \operatorname{Im} (h_{\infty})$ if and only if there exists a $\gamma$-equivariant morphism $\widetilde{\varphi} \colon \operatorname{Spec} K[[t^{1/d'}]] \to X$ making the following diagram commute: 
\[
\xymatrix{
  X \ar[r]^-{q} & X' \\
  \operatorname{Spec} K[[t^{1/d'}]] \ar[u]^-{\exists \widetilde{\varphi}} \ar[r] & \operatorname{Spec} K[[t]] \ar[u]_-{\varphi}
} 
\]
On the other hand, by definition, $\varphi \in {X'}^{(\gamma)} _{\infty}$ if and only if there exists a morphism $\varphi' \colon \operatorname{Spec} K[[t^{1/d}]] \to X$ satisfying $\varphi'(\xi t^{1/d}) = \gamma \varphi'(t^{1/d})$ that makes the following diagram commute: 
\[
\xymatrix{
  X \ar[r]^-{q} & X' \\
  \operatorname{Spec} K[[t^{1/d}]] \ar[u]^-{\exists \varphi'} \ar[r] & \operatorname{Spec} K[[t]] \ar[u]_-{\varphi}
}
\]
Thus, to complete the proof, it suffices to show that the existence of such a $\widetilde{\varphi}$ is equivalent to the existence of such a $\varphi'$.

First, suppose we are given such a $\widetilde{\varphi}$. 
By composing it with the natural morphism $\operatorname{Spec} K[[t^{1/d}]] \to \operatorname{Spec} K[[t^{1/d'}]]$, we obtain a morphism $\varphi' \colon \operatorname{Spec} K[[t^{1/d}]] \to X$ making the diagram commute. 
Since $\widetilde{\varphi}$ is $\gamma$-equivariant, we have $\widetilde{\varphi}(\xi^{d/d'} t^{1/d'}) = \gamma \widetilde{\varphi}(t^{1/d'})$. 
Therefore, the composed morphism $\varphi'$ satisfies 
\[
\varphi'(\xi t^{1/d}) = \widetilde{\varphi}(\xi^{d/d'} t^{1/d'}) = \gamma \widetilde{\varphi}(t^{1/d'}) = \gamma \varphi'(t^{1/d}), 
\]
which means $\varphi'$ satisfies the required condition. 

Conversely, suppose we are given such a $\varphi'$. 
Since the order of $\gamma$ is $d'$, we have  
\[
\varphi'(\xi^{d'} t^{1/d}) = \gamma^{d'} \varphi'(t^{1/d}) = \varphi'(t^{1/d}). 
\]
This equality means that the morphism $\varphi'$ is invariant under the $\xi^{d'}$-action on $\operatorname{Spec} K[[t^{1/d}]]$. 
Therefore, $\varphi'$ uniquely descends to a morphism 
\[
\widetilde{\varphi} \colon \operatorname{Spec} K[[t^{1/d'}]] \to X. 
\]
Because $\varphi'$ is the composition of $\operatorname{Spec} K[[t^{1/d}]] \to \operatorname{Spec} K[[t^{1/d'}]]$ and $\widetilde{\varphi}$, the initial relation $\varphi'(\xi t^{1/d}) = \gamma \varphi'(t^{1/d})$ yields the relation 
\[
\widetilde{\varphi}(\xi^{d/d'} t^{1/d'}) = \gamma \widetilde{\varphi}(t^{1/d'}), 
\]
ensuring that $\widetilde{\varphi}$ is $\gamma$-equivariant.
\end{proof}

\begin{rmk}\label{rmk:identity_thin}
In the following, we consider the thinness of subsets of $X'_{\infty}$. 
Here, we identify $X'_{\infty}$ with the arc space $(X' \times T)_{\infty}$ of $X' \times T$ viewed as a $k[t]$-scheme (see Remark \ref{rmk:bc}), and thinness is understood in the sense of Definition \ref{defi:thin}. 
As noted in Remark \ref{rmk:thin_cylinder_kt}(3), it should be stressed that this notion of thinness is weaker than the usual one in the sense of Definition \ref{defi:thin_k}.
\end{rmk}

\begin{prop}\label{prop:thin_eq}
In the setting of Definition \ref{defi:dl1}, let $\gamma \in G$. 
For a subset $C \subset X'_{\infty}$, the following conditions are equivalent:
\begin{enumerate}
\item 
$C \cap {X'}^{(\gamma)}_{\infty}$ is a thin set of $X'_{\infty}$ (see Remark \ref{rmk:identity_thin}). 

\item 
$h_{\infty}^{-1} (C)$ is a thin set of $X^{[\gamma]}_{\infty}$. 
\end{enumerate}
\end{prop}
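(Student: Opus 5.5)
Both directions come down to transporting closed subschemes along $h \colon X^{[\gamma]} \to X' \times T$. The setup: by Remark \ref{rmk:twist}(2), $X^{[\gamma]}$ satisfies $(\star)_n$ with $n=\dim X = \dim X'$, and so does $X' \times T$. We use Lemma \ref{lem:image_arc}, namely $h_\infty(X^{[\gamma]}_\infty) = {X'}^{(\gamma)}_\infty$. It gives
\[
h_\infty\bigl(h_\infty^{-1}(C)\bigr) = C \cap {X'}^{(\gamma)}_{\infty}.
\]

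\emph{(2) $\Rightarrow$ (1).} Let $h_\infty^{-1}(C) \subset W_\infty$ for a closed subscheme $W \subset X^{[\gamma]}$ with $\dim W \le n$. Since $h$ is finite, being induced by the finite morphism $X\times T_{1/d'} \to X'\times T_{1/d'}$ followed by the quotient, the scheme-theoretic image $Z := \overline{h(W)}$ is closed with $\dim Z \le \dim W \le n$. Moreover $h|_W$ factors through $Z$, so $h_\infty(W_\infty) \subset Z_\infty$. Applying $h_\infty$ and the displayed equality yields $C \cap {X'}^{(\gamma)}_\infty \subset Z_\infty$, which is thinness.

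\emph{(1) $\Rightarrow$ (2).} Let $C\cap {X'}^{(\gamma)}_\infty \subset Z_\infty$ with $Z \subset X'\times T$ closed and $\dim Z \le n$. Then
\[
h_\infty^{-1}(C) = h_\infty^{-1}\bigl(C \cap {X'}^{(\gamma)}_\infty\bigr) \subset h_\infty^{-1}(Z_\infty) = \bigl(h^{-1}(Z)\bigr)_\infty,
\]
where $h^{-1}(Z) := Z \times_{X'\times T} X^{[\gamma]}$ is the scheme-theoretic preimage. The last equality holds because an arc $\operatorname{Spec}K[[t]] \to X^{[\gamma]}$ whose composite with $h$ factors through $Z$ factors through the fiber product. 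Since $h$ is finite, hence quasi-finite, $\dim h^{-1}(Z) \le \dim Z \le n$. So $h_\infty^{-1}(C)$ is thin.

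The main point to verify is the finiteness of $h$. I would check it as follows: $X\times T_{1/d'} \to X' \times T_{1/d'}$ is finite because $X \to X'$ is finite; composing with the finite map $X'\times T_{1/d'} \to X'\times T$ gives a finite map $X\times T_{1/d'} \to X'\times T$. This map factors through the quotient $X^{[\gamma]}$, and the quotient map $X\times T_{1/d'} \to X^{[\gamma]}$ is surjective. Hence $h$ is proper and quasi-finite, and therefore finite. Dimension of images and preimages is then controlled as stated above. The only other subtlety is that thinness in Definition \ref{defi:thin} uses closed subschemes of dimension $\le n$, possibly non-reduced. Scheme-theoretic images and preimages keep us within that class, so no further issue arises.
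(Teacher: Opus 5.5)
Your proof is correct and is essentially the paper's argument. Both directions use Lemma \ref{lem:image_arc} to write $h_\infty^{-1}(C)=h_\infty^{-1}(C\cap {X'}^{(\gamma)}_\infty)$ and $h_\infty(h_\infty^{-1}(C))=C\cap {X'}^{(\gamma)}_\infty$; for (1) $\Rightarrow$ (2) you pull the closed subscheme back along $h$, and for (2) $\Rightarrow$ (1) you push it forward through the finite map $h$. Your extra checks only fill in details the paper leaves implicit: you verify that $h$ is finite, and you phrase thinness via $\dim\le n$ instead of the paper's ``proper closed subset,'' which agree here because $X^{[\gamma]}$ and $X'\times T$ are irreducible of dimension $n+1$.
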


\begin{proof}
First, we prove (1) $\Rightarrow$ (2). 
Suppose that $C \cap {X'}^{(\gamma)}_{\infty}$ is a thin set of $X'_{\infty}$. 
By definition (see Remark \ref{rmk:identity_thin}), there exists a proper closed subset $W' \subsetneq X' \times T$ such that $C \cap {X'}^{(\gamma)}_{\infty} \subset W' _{\infty}$. 
By Lemma \ref{lem:image_arc}, we have 
\[
h_{\infty}^{-1}(C) = h_{\infty}^{-1}(C \cap {X'}^{(\gamma)}_{\infty}) \subset h_{\infty} ^{-1} (W'_{\infty}) = (h^{-1}(W'))_{\infty}. 
\]
Since $W' \subsetneq X' \times T$ is a proper closed subset, $h^{-1}(W')$ is a proper closed subset of $X^{[\gamma]}$. 
Thus, $h_{\infty}^{-1}(C)$ is a thin set of $X^{[\gamma]}_{\infty}$. 

Next, we prove (2) $\Rightarrow$ (1). 
Suppose that $h_{\infty}^{-1} (C)$ is a thin set of $X^{[\gamma]}_{\infty}$. 
By definition, there exists a proper closed subset $W \subsetneq X^{[\gamma]}$ such that $h_{\infty}^{-1}(C) \subset W_{\infty}$. 
Applying $h_{\infty}$ to both sides, and using Lemma \ref{lem:image_arc} again, we obtain 
\[
C \cap {X'}^{(\gamma)}_{\infty} = h_{\infty}(h_{\infty}^{-1}(C)) \subset h_{\infty} (W_{\infty}) \subset W' _{\infty} 
\]
for $W' := h(W)$. 
Since $h$ is a finite morphism by construction, 
$W'$ is a proper closed subset of $X'  \times T$, which proves that $C \cap {X'}^{(\gamma)}_{\infty}$ is a thin set of $X'_{\infty}$ (see Remark \ref{rmk:identity_thin}). 
\end{proof}

\begin{cor}\label{cor:vf_thin}
Suppose that a finite group $G$ acts on a normal variety $X$. 
Let $X' = X/G$ be the quotient variety. 
Then, the following conditions are equivalent. 
\begin{enumerate}
\item 
The $G$-action on $X$ is virtually free. 

\item
For any $\gamma \in G \setminus \{ 1_G \}$, $X^{[\gamma]}_{\infty}$ is a thin set of $X^{[\gamma]}_{\infty}$ itself. 

\item
For any $\gamma \in G \setminus \{ 1_G \}$, ${X'}^{(\gamma)}_{\infty}$ is a thin set of $X'_{\infty}$ (see Remark \ref{rmk:identity_thin}). 
\end{enumerate}
\end{cor}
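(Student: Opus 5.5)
The equivalence (1) $\Leftrightarrow$ (2) is exactly Proposition \ref{prop:vf_thin}, so nothing new is needed there. The plan is therefore to prove (2) $\Leftrightarrow$ (3) one element at a time. Fix $\gamma \in G \setminus \{1_G\}$; it suffices to show that $X^{[\gamma]}_{\infty}$ is a thin set of itself if and only if ${X'}^{(\gamma)}_{\infty}$ is a thin set of $X'_{\infty}$.

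For this I will apply Proposition \ref{prop:thin_eq} with the choice $C := X'_{\infty}$. With this choice, $C \cap {X'}^{(\gamma)}_{\infty} = {X'}^{(\gamma)}_{\infty}$, so condition (1) of Proposition \ref{prop:thin_eq} says precisely that ${X'}^{(\gamma)}_{\infty}$ is thin in $X'_{\infty}$, in the sense of Remark \ref{rmk:identity_thin}. On the other side, $h_{\infty}^{-1}(C) = h_{\infty}^{-1}(X'_{\infty}) = X^{[\gamma]}_{\infty}$, since $h_{\infty}$ is a map $X^{[\gamma]}_{\infty} \to X'_{\infty}$. Hence condition (2) of Proposition \ref{prop:thin_eq} says that $X^{[\gamma]}_{\infty}$ is a thin set of itself. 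Proposition \ref{prop:thin_eq} then gives the desired equivalence for each $\gamma$. Quantifying over all $\gamma \ne 1_G$ yields (2) $\Leftrightarrow$ (3), and combining this with Proposition \ref{prop:vf_thin} finishes the proof.

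I expect no real obstacle here; the only point needing care is that the notions of thinness match. For $X^{[\gamma]}$ thinness is taken in the sense of Definition \ref{defi:thin} with $n = \dim X$, which is legitimate because $X^{[\gamma]}$ satisfies $(\star)_n$ by Remark \ref{rmk:twist}(2). For $X'_{\infty}$ thinness is taken via $X' \times T$ as in Remark \ref{rmk:identity_thin}. These are exactly the conventions used in Proposition \ref{prop:thin_eq}, so the substitution $C = X'_{\infty}$ is valid.
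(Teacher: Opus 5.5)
Your proposal is correct and follows the same route as the paper. The paper proves the corollary by combining Proposition~\ref{prop:vf_thin} for (1)$\Leftrightarrow$(2) with Proposition~\ref{prop:thin_eq}, implicitly applied with $C = X'_{\infty}$ for (2)$\Leftrightarrow$(3), exactly as you spelled out.
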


\begin{proof}
The assertion follows from Propositions \ref{prop:vf_thin} and \ref{prop:thin_eq}. 
\end{proof}

\section{Arc spaces of quotient varieties: linear actions}\label{section:dl2}
In this section, we focus on the case where a finite group $G$ acts linearly on an affine space. 
In this setting, Denef and Loeser \cite{DL02} introduced a certain $k[t]$-scheme, denoted by $X^{(\gamma)}$, to investigate the arc space of the quotient variety (cf.\ \cite{NS22}). 
The goal of this section is to recall the construction of $X^{(\gamma)}$ and to compare it with the scheme $X^{[\gamma]}$ introduced in Section \ref{section:vfc}. 
In particular, we will establish a natural bijection between their sets of $K$-arcs for any field extension $K$ (Proposition \ref{prop:()to[]}).

Let $N$ be a positive integer, and let $R = k[x_1, \ldots, x_N]$. 
Let $G \subset \mathrm{GL}_N (k)$ be a finite subgroup of order $d$. 
Suppose that $G$ linearly acts on $\mathbb{A}^N = \operatorname{Spec} R$. 
Let $\xi$ be a primitive $d$-th root of unity.

Let $\gamma \in G$ be an element of order $d'$. 
Since $G$ is a finite group, $\gamma$ can be diagonalized with respect to a new basis $x_1^{(\gamma)}, \ldots, x_N^{(\gamma)}$ consisting of $k$-linear combinations of the original basis $x_1, \ldots , x_N$.
We may assume that the action of $\gamma$ is represented by the diagonal matrix $\operatorname{diag} \left( \xi ^{e_1}, \ldots , \xi ^{e_N} \right)$ with respect to this basis, where $0 \le e_i \le d-1$. 
We define a $k$-algebra homomorphism ${\lambda}^* _{\gamma} \colon R \to R[t^{1/d}]$ as follows:
\[
{\lambda}^* _{\gamma} \colon R \to R[t^{1/d}]; 
\quad x_i^{(\gamma)} \mapsto t^{e_i/d}x_i^{(\gamma)}. 
\]
We note that the ring homomorphism ${\lambda}^* _{\gamma}$ does not depend on the choice of the basis $x_1^{(\gamma)}, \ldots, x_N^{(\gamma)}$ as long as $\gamma$ is diagonalized with respect to it. 

By trivially extending ${\lambda}^* _{\gamma}$ to $R[t^{1/d'}]$ so that ${\lambda}^* _{\gamma} \bigl( t^{1/d'} \bigr) = t^{1/d'}$, it induces a $k[t]$-algebra homomorphism
\[
{\lambda}^* _{\gamma} \colon \bigl( R[t^{1/d'}] \bigr)^{\langle \gamma \rangle} \to R[t], 
\]
where the $\gamma$-action on $t^{1/d'}$ is defined by $\gamma \bigl( t^{1/d'} \bigr) = \xi ^{d/d'} t^{1/d'}$. 
We note that the image of this homomorphism indeed falls into $R[t]$. 
In fact, an arbitrary monomial $\bigl( t^{1/d'} \bigr)^m \prod \bigl( x_j^{(\gamma)} \bigr)^{m_j}$ belongs to the invariant subring $\bigl( R[t^{1/d'}] \bigr)^{\langle \gamma \rangle}$ if and only if $\xi^{m(d/d') + \sum m_j e_j} = 1$, which means $m(d/d') + \sum m_j e_j$ is a multiple of $d$. 
Since its image under $\lambda^*_{\gamma}$ is given by $t^{m/d' + \sum m_j e_j / d} \prod \bigl( x_j^{(\gamma)} \bigr)^{m_j}$, the exponent of $t$ is exactly $\frac{1}{d} \bigl( m(d/d') + \sum m_j e_j \bigr)$, which is an integer. 
Geometrically, ${\lambda}^* _{\gamma}$ induces a morphism of schemes over $T$:
\[
\lambda_{\gamma} \colon \mathbb{A}^N \times T \to ( \mathbb{A}^N )^{[\gamma]}. 
\]

Let $f_1, \ldots , f_{c} \in R^G$ be a regular sequence which is contained in the maximal ideal $(x_1, \ldots , x_N) ^G$. 
We define 
\[
X := \operatorname{Spec} \bigl( R/(f_1, \ldots, f_{c}) \bigr) \subset \mathbb{A}^N, \quad
X' := \operatorname{Spec} \bigl( R^G/(f_1, \ldots, f_{c}) \bigr). 
\]
Then, we have $X' \cong X/G$. 
Suppose that $X$ is normal. 

Since the polynomials $f_i$ are $G$-invariant (in particular, $\gamma$-invariant), we have 
\[
F_i := \lambda^* _{\gamma} (f_i) \in R[t]. 
\]
We define a $k[t]$-scheme
\[
X^{(\gamma)} := \operatorname{Spec} \bigl( R[t]/(F_1, \ldots, F_c) \bigr) \subset \mathbb{A}^N \times T. 
\]
Then, $\lambda_{\gamma} \colon \mathbb{A}^N \times T \to ( \mathbb{A}^N )^{[\gamma]}$ induces a morphism 
\[
\lambda_{\gamma} \colon X^{(\gamma)} \to X^{[\gamma]}. 
\]
Let $X^{(\gamma)} _{\infty}$ denote its arc space as a $k[t]$-scheme. 

\begin{rmk}\label{rmk:iso_out0}
The morphism $\lambda_{\gamma} \colon X^{(\gamma)} \to X^{[\gamma]}$ is an isomorphism over $T \setminus \{0\}$. 
Indeed, over the open subset where $t \neq 0$, the ring homomorphism $\lambda_{\gamma}^* \colon \bigl( R[t^{1/d'}] \bigr)^{\langle \gamma \rangle} \to R[t]$ has an inverse defined by 
\[
x_i^{(\gamma)} \mapsto t^{-e_i/d} x_i^{(\gamma)}. 
\]
Note here that the element $t^{-e_i/d} x_i^{(\gamma)}$ is $\gamma$-invariant. 
Therefore, $\lambda_{\gamma}$ gives an isomorphism outside $t=0$. 
\end{rmk}

\begin{rmk}
In particular, $X^{(\gamma)}$ satisfies the condition $(\star)_n$ in Remark \ref{rmk:star} with $n := \dim X = N-c$.
\end{rmk}

\begin{prop}\label{prop:()to[]}
The morphism $\lambda_{\gamma} \colon X^{(\gamma)} \to X^{[\gamma]}$ induces a bijection $\lambda_{\gamma, \infty} \colon X^{(\gamma)}_{\infty}(K) \xrightarrow{\sim} X^{[\gamma]}_{\infty}(K)$ for any field extension $K$ of $k$. 
Furthermore, this bijection is a homeomorphism with respect to the induced Zariski topology.
\end{prop}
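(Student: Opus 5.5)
The plan is to compare both sides through Proposition \ref{prop:bp}(3), which identifies $X^{[\gamma]}_{\infty}(K)$ with the set of $\gamma$-equivariant morphisms $\operatorname{Spec} K[[s]] \to X$, where $s := t^{1/d'}$. On the other side, a $K$-arc of $X^{(\gamma)}$ is a tuple $y(t) = (y_1(t), \ldots, y_N(t)) \in K[[t]]^N$, written in the coordinates $x^{(\gamma)}_i$, with $F_j(y) = 0$ for all $j$. So the statement reduces to a concrete comparison of power series.

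First I would record an arithmetic fact. Since $\gamma^{d'} = 1$, we have $\xi^{e_i d'} = 1$, hence $d/d'$ divides $e_i$. Set $c_i := e_i d'/d$; then $c_i \in \{0, \ldots, d'-1\}$ and $t^{e_i/d} = s^{c_i}$. With this, $\lambda_{\gamma,\infty}$ is given explicitly on $K$-points by
\[
y(t) \longmapsto x(s), \qquad x_i(s) := s^{c_i}\, y_i(s^{d'}).
\]
This is a $K[[s]]$-point of $\mathbb{A}^N$ with $f_j(x) = \lambda^*_{\gamma}(f_j)(y) = F_j(y) = 0$, so it is a point of $X$. It is also $\gamma$-equivariant: substituting $s \mapsto \xi^{d/d'}s$ multiplies $x_i$ by $\xi^{e_i}$, which is exactly the $\gamma$-action on the coordinate $x_i^{(\gamma)}$. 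I would also check that this pointwise formula agrees with the morphism $\lambda_{\gamma}$ composed with the identification in Proposition \ref{prop:bp}(3). This holds because both are induced by $x_i^{(\gamma)} \mapsto t^{e_i/d} x_i^{(\gamma)}$ after base change to $T_{1/d'}$.

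For bijectivity I would take an equivariant arc $x_i(s) = \sum_m a_{i,m} s^m$. Equivariance forces $\xi^{dm/d'} = \xi^{e_i}$ whenever $a_{i,m} \neq 0$, that is, $m \equiv c_i \pmod{d'}$. Because $0 \le c_i < d'$, every such $m$ satisfies $m \ge c_i$, so $x_i = s^{c_i}\, y_i(s^{d'})$ for a unique $y_i \in K[[t]]$. The equations $f_j(x) = 0$ then give $F_j(y) = 0$, using $f_j \in R^G$ and the definition of $F_j$. This produces the inverse map. The main point needing care here is the sign convention, namely whether $\gamma$ acts on points or on coordinate functions (contragredient). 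Getting it wrong replaces $e_i$ by $d - e_i$, and I would fix it by matching it with the convention used for $\lambda^*_{\gamma}$ in the construction.

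For the homeomorphism claim, continuity of $\lambda_{\gamma,\infty}$ is automatic because it comes from a morphism. For continuity of the inverse, I would work at the level of coefficients. The coefficients of $y_i$ are exactly the coefficients $a_{i, c_i + d'n}$ of $x_i$, and all other coefficients of an equivariant arc vanish. Hence the $m$-jet of $y$ determines and is determined by the $(d'(m+1)-1)$-jet of $x$ restricted to the equivariant locus, via a linear reindexing of coordinates. A closed subset of $X^{(\gamma)}_{\infty}$ is cut out by polynomial conditions on finitely many coefficients of $y$ (on each jet level). Its image is therefore cut out by the same polynomials in the corresponding coefficients of $x$ inside the equivariant locus, so it is closed. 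I expect the delicate step to be this topological one: it requires knowing that the Zariski topology on $X^{[\gamma]}_{\infty}$, transported via Proposition \ref{prop:bp}(3), is the one induced from the coefficients of equivariant $K[[s]]$-arcs of $X$. I would establish this by using the finite morphism $X \times T_{1/d'} \to X^{[\gamma]} \times_T T_{1/d'}$ of Proposition \ref{prop:bp}(1) to compare jets on both sides.
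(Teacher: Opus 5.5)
Your proposal is correct and follows essentially the same route as the paper: identify $K$-arcs of $X^{[\gamma]}$ with $\gamma$-equivariant $K[[t^{1/d'}]]$-arcs via Proposition \ref{prop:bp}(3), use equivariance to force the exponent congruence $m \equiv c_i \pmod{d'}$ and factor out $t^{e_i/d}$, and get the homeomorphism from the fact that the bijection is just a reindexing of coefficients. The differences are cosmetic: the paper first reduces to $X = \mathbb{A}^N$ using $\lambda_{\gamma}^{-1}(X^{[\gamma]}) = X^{(\gamma)}$ instead of checking $f_j(x) = 0 \Leftrightarrow F_j(y) = 0$ directly, and it simply asserts the coefficient description of the topology on $X^{[\gamma]}_{\infty}(K)$ that you flag as the delicate point.
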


\begin{proof}
Since the inverse image of $X^{[\gamma]}$ by the morphism $\lambda_{\gamma} \colon \mathbb{A}^N \times T \to (\mathbb{A}^N)^{[\gamma]}$ coincides with $X^{(\gamma)}$ by definition, it is sufficient to show the assertions for the case $X = \mathbb{A}^N = \operatorname{Spec} R$. 

A $K$-arc in $(\mathbb{A}^N \times T)_{\infty}$ corresponds to a $k[t]$-algebra homomorphism $R[t] \to K[[t]]$, which is uniquely determined by assigning $x_i^{(\gamma)} \mapsto \alpha_i(t) \in K[[t]]$ for each $i$. 
On the other hand, a $K$-arc in $(\mathbb{A}^N)^{[\gamma]}_{\infty}$ corresponds to a $\gamma$-equivariant $k$-algebra homomorphism $\varphi^* \colon R \to K[[t^{1/d'}]]$. 
Since $\gamma$ acts on $x_i^{(\gamma)}$ by $\xi^{e_i}$ and on $t^{1/d'}$ by $\xi^{d/d'}$, this equivariance condition means 
\[
\xi^{e_i} \varphi^* \bigl( x_i^{(\gamma)} \bigr) \bigl( t^{1/d'} \bigr) = \varphi^* \bigl( x_i^{(\gamma)} \bigr) \bigl( \xi^{d/d'} t^{1/d'} \bigr). 
\]
This equality forces all non-zero terms in the power series $\varphi^* \bigl( x_i^{(\gamma)} \bigr) = \sum c_m \bigl( t^{1/d'} \bigr)^m \in K[[t^{1/d'}]]$ to satisfy $\xi^{m(d/d')} = \xi^{e_i}$. 
Since $\xi$ is a primitive $d$-th root of unity, this is equivalent to $m(d/d') \equiv e_i \pmod d$, which means $m/d' \equiv e_i/d \pmod{1}$. 
Therefore, $\varphi^* \bigl( x_i^{(\gamma)} \bigr)$ can be uniquely factored as 
\[
\varphi^* \bigl( x_i^{(\gamma)} \bigr) = t^{e_i/d} \alpha_i(t)
\]
for some power series $\alpha_i(t) \in K[[t]]$. 
Note that since $\gamma^{d'} = 1$, we have $\xi^{e_i d'} = 1$, implying $e_i$ is a multiple of $d/d'$. Thus $t^{e_i/d}$ is an integer power of $t^{1/d'}$. 

Since the morphism $\lambda_{\gamma}$ is defined by the ring homomorphism $\lambda_{\gamma}^* \bigl( x_i^{(\gamma)} \bigr) = t^{e_i/d} x_i^{(\gamma)}$, the induced map on the arc spaces exactly sends the arc $x_i^{(\gamma)} \mapsto \alpha_i(t)$ to the $\gamma$-equivariant arc $\varphi^* \bigl( x_i^{(\gamma)} \bigr) = t^{e_i/d} \alpha_i(t)$. 
This correspondence gives a natural bijection between the sets of $K$-arcs. 

Finally, we observe that this bijection is a homeomorphism. 
As shown above, a $K$-arc in either $X^{(\gamma)}_{\infty}(K)$ or $X^{[\gamma]}_{\infty}(K)$ is determined by the exact same set of coefficients of the power series $\alpha_i(t) \in K[[t]]$ for $1 \le i \le N$. 
Both spaces of $K$-arcs are naturally identified with the inverse limit of affine spaces $\varprojlim_m \mathbb{A}^{N(m+1)}_K$ parameterized by these coefficients. 
Since the bijection $\lambda_{\gamma, \infty}$ simply acts as the identity map under this natural identification of coordinates, it is a homeomorphism. 
\end{proof}

\begin{prop}\label{prop:thin_eq2}
For a subset $C \subset X^{(\gamma)}_{\infty}$, the following conditions are equivalent:
\begin{enumerate}
\item 
$C$ is a thin set of $X^{(\gamma)}_{\infty}$. 

\item 
$\lambda_{\gamma, \infty} (C)$ is a thin set of $X^{[\gamma]}_{\infty}$. 
\end{enumerate}
\end{prop}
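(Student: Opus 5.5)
The plan is to move closed subschemes back and forth along $\lambda_{\gamma}$ and its inverse, exactly as in the proof of Proposition \ref{prop:thin_eq}. Both $X^{(\gamma)}$ and $X^{[\gamma]}$ satisfy $(\star)_n$ with $n = \dim X$, so thinness means being contained in $Z_{\infty}$ for some closed $Z$ of dimension $\le n$. The proof rests on two facts. First, $\lambda_{\gamma}$ is a finite morphism: the source $\mathbb{A}^N \times T$ is finite over $(\mathbb{A}^N)^{[\gamma]}$, because $R[t]$ is integral over $\lambda^*_{\gamma}\bigl((R[t^{1/d'}])^{\langle \gamma \rangle}\bigr)$. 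Indeed, $(x_i^{(\gamma)})^{d}\, t^{e_i}$ and $t$ lie in the image, so each $x_i^{(\gamma)}$ satisfies a monic equation over the subring generated by $t$ and the $t^{e_i}(x_i^{(\gamma)})^d$. Second, $\lambda_{\gamma,\infty}$ is a bijection on $K$-arcs (Proposition \ref{prop:()to[]}).

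For (1) $\Rightarrow$ (2), suppose $C \subset Z_{\infty}$ with $Z \subset X^{(\gamma)}$ closed and $\dim Z \le n$. Set $W := \lambda_{\gamma}(Z)$. Since $\lambda_{\gamma}$ is finite, $W$ is closed in $X^{[\gamma]}$ with $\dim W \le n$. Any arc of $Z$ maps into $W$, so $\lambda_{\gamma,\infty}(C) \subset \lambda_{\gamma,\infty}(Z_{\infty}) \subset W_{\infty}$, and $\lambda_{\gamma,\infty}(C)$ is thin.

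For (2) $\Rightarrow$ (1), suppose $\lambda_{\gamma,\infty}(C) \subset W_{\infty}$ with $\dim W \le n$. Set $Z := \lambda_{\gamma}^{-1}(W)$. Then $Z_{\infty} = \lambda_{\gamma,\infty}^{-1}(W_{\infty})$, since an arc factors through the scheme-theoretic preimage if and only if its image factors through $W$. Hence $C \subset Z_{\infty}$. The remaining issue is that $\dim Z$ may exceed $n$: the central fibre of $\lambda_{\gamma}$ is not quasi-finite in general, since $\lambda_{\gamma}$ contracts along coordinates with $e_i > 0$. I would handle this as follows. Outside $t = 0$, $\lambda_{\gamma}$ is an isomorphism (Remark \ref{rmk:iso_out0}), so $Z|_{t \neq 0} \cong W|_{t \neq 0}$ has dimension $\le n$. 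Let $Z_1$ be the closure of $Z|_{t\neq 0}$; then $\dim Z_1 \le n$. Every arc $\alpha \in Z_{\infty}$ either has generic point off $t = 0$, hence lies in $(Z_1)_{\infty}$, or is contained in the fibre $t=0$. There are no $k[t]$-arcs of the latter kind, because a $k[t]$-arc $\operatorname{Spec} K[[t]] \to Z$ must send the generic point to $t \neq 0$. So every arc of $Z$ factors through the closure of its generic point, which lies in $Z_1$, giving $Z_{\infty} = (Z_1)_{\infty}$ and hence $C \subset (Z_1)_{\infty}$ with $\dim Z_1 \le n$.

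I expect the main obstacle to be making this last step rigorous, because arcs of the reduced closure versus the scheme-theoretic $Z$ need care. I would argue on $K$-points, via a factorization through the schematic closure of $Z|_{t\neq0}$, using that $K[[t]]$ is $t$-torsion free so the arc factors through the schematic image. The same argument also shows that the finiteness in the first direction is really only needed over $t \ne 0$, where $\lambda_{\gamma}$ is an isomorphism, so the dimension bound there is robust.
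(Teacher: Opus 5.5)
Your first ``fact'' is false: $\lambda_{\gamma}$ is not finite in general. The relation you obtain for $x_i^{(\gamma)}$ is $t^{e_i}\,Y^{d} = \lambda^*_{\gamma}\bigl((x_i^{(\gamma)})^{d}\bigr)$, and this is not monic in $Y$ once $e_i>0$. For a concrete case, take $N=1$ and $\gamma=-1$, so $d=d'=2$ and $e_1=1$. The invariant ring $k[x^2,\,t^{1/2}x,\,t]$ is sent onto $k[tx^2,\,tx,\,t]\subset k[x,t]$. The whole line $\{t=0\}$ of $\mathbb{A}^1\times T$ is then contracted to a single point. The image of $\lambda_\gamma$ is $\{t\neq 0\}$ together with that one point, which is not closed in $(\mathbb{A}^1)^{[\gamma]}$, whose reduced central fibre is the line $\mathbb{A}^1/\langle\gamma\rangle$. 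Your own later remark that the central fibre of $\lambda_\gamma$ is not quasi-finite already contradicts finiteness. So in (1) $\Rightarrow$ (2) the set $W=\lambda_\gamma(Z)$ need not be closed, and the step ``$W$ is closed with $\dim W\le n$'' fails as written.

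The repair is easy and needs no finiteness. Let $W$ be the schematic image of $Z$, whose underlying set is $\overline{\lambda_\gamma(Z)}$. The closure of the image of a scheme of finite type over $k$ under a $k$-morphism has dimension at most that of the source, so $\dim W\le\dim Z\le n$. Moreover, $\lambda_\gamma\circ\alpha$ factors through $W$ for every arc $\alpha$ of $Z$, which gives $\lambda_{\gamma,\infty}(C)\subset W_\infty$. Alternatively, as your last sentence hints, replace $Z$ by the closure of $Z|_{t\neq0}$ and transport it through the isomorphism of Remark \ref{rmk:iso_out0}.

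Your (2) $\Rightarrow$ (1) argument is correct: you pass to the closure of $Z|_{t\neq0}$ and use that $K[[t]]$ is $t$-torsion free. With the fix above, your proof is the same as the paper's. The paper observes that $k[t]$-arcs only see the part over $t\neq0$ (Remark \ref{rmk:dom}) and that $\lambda_\gamma$ is an isomorphism there, so the relevant closed subschemes correspond on both sides.
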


\begin{proof}
The assertion follows from Remarks \ref{rmk:dom} and \ref{rmk:iso_out0}. 
Indeed, by Remark \ref{rmk:dom}, the arc spaces $X^{(\gamma)}_{\infty}$ and $X^{[\gamma]}_{\infty}$ only depend on the irreducible components dominating $\operatorname{Spec} k[t]$. 
Since $\lambda_{\gamma}$ is an isomorphism over the open subset $t \neq 0$ by Remark \ref{rmk:iso_out0}, it induces a natural bijection between the proper closed subsets of $X^{(\gamma)}$ dominating $\operatorname{Spec} k[t]$ and those of $X^{[\gamma]}$. 
Therefore, $C$ is contained in the arc space of a proper closed subset if and only if $\lambda_{\gamma, \infty}(C)$ is, which completes the proof. 
\end{proof}

\begin{eg}\label{eg:shiki}
We consider the case $N = 3$, $c = 1$, and $R = k[x_1, x_2, x_3]$. 
Let $f_1 = x_1^3 + x_2^3 + x_3^3$. 
Let $G = \langle \gamma \rangle$ be a cyclic group of order $d = 3$. 
Let $\xi$ be a primitive cube root of unity. 
Suppose that the representation matrix of $\gamma$ with respect to the basis $x_1, x_2, x_3$ is $\operatorname{diag} (1, \xi, \xi^2)$. 
Thus, the weights are $e_1 = 0, e_2 = 1$, and $e_3 = 2$. 

By definition, the homomorphism $\lambda^*_{\gamma} \colon R \to R[t^{1/3}]$ is given by
\[
\lambda^*_{\gamma}(x_1) = x_1, \quad \lambda^*_{\gamma}(x_2) = t^{1/3}x_2, \quad \lambda^*_{\gamma}(x_3) = t^{2/3}x_3.
\]
Then, the invariant polynomial $f_1$ is mapped to 
\[
F_1 := \lambda^*_{\gamma}(f_1) = x_1^3 + t x_2^3 + t^2 x_3^3 \in R[t]. 
\]
Therefore, the $k[t]$-scheme $X^{(\gamma)}$ is given by
\[
X^{(\gamma)} = \operatorname{Spec} \bigl( k[x_1, x_2, x_3, t] / (x_1^3 + t x_2^3 + t^2 x_3^3) \bigr). 
\]
On the other hand, the $\gamma$-invariant subring $(R[t^{1/3}])^{\langle \gamma \rangle}$ is generated by the invariant monomials $x_1, x_2^3, x_3^3, x_2 x_3, t, t^{2/3}x_2$, and $t^{1/3}x_3$. 
Thus, $X^{[\gamma]}$ is given by
\[
X^{[\gamma]} = \operatorname{Spec} \bigl( k[x_1, x_2^3, x_3^3, x_2 x_3, t, t^{2/3}x_2, t^{1/3}x_3] / (x_1^3 + x_2^3 + x_3^3) \bigr). 
\]

We note that the fibers of the morphisms $X^{(\gamma)} \to T$ and $X^{[\gamma]} \to T$ over $t \neq 0$ are isomorphic (see Remark \ref{rmk:iso_out0}). 
However, their special fibers over $t = 0$ are different. 
The special fiber $X^{(\gamma)}_0$ is given by 
\[
X^{(\gamma)}_0 = \operatorname{Spec} \bigl( k[x_1, x_2, x_3] / (x_1^3) \bigr), 
\]
which is a non-reduced scheme. 
On the other hand, the special fiber $X^{[\gamma]}_0$ is given by 
\[
X^{[\gamma]}_0 = \operatorname{Spec} \bigl( k[x_1, x_2^3, x_3^3, x_2 x_3, t, t^{2/3}x_2, t^{1/3}x_3] / (x_1^3 + x_2^3 + x_3^3, t) \bigr).
\]
Its reduced structure is exactly the quotient variety $X'$, which is isomorphic to 
\[
\operatorname{Spec} \bigl( k[x_1, x_2^3, x_3^3, x_2 x_3] / (x_1^3 + x_2^3 + x_3^3) \bigr). 
\]
\end{eg}

\section{Virtually free actions: localized setting}\label{section:vf_local}

We begin by introducing a localized version of a virtually free action for a fixed element $\gamma \in G$ at a closed point $x \in X$. 

\begin{defi}\label{defi:vf_local}
Suppose that a finite group $G$ acts on a normal variety $X$. 
\begin{enumerate}
\item
For a closed point $x \in X$ and an element $\gamma \in G$, 
we say that the $\gamma$-action on $X$ is \textit{virtually free} at $x$ if, for any $\gamma$-equivariant proper birational morphism $f \colon Y \to X$ from a normal variety $Y$, we have
\[
\dim  \bigl( \operatorname{Fix}(\gamma) \cap f^{-1}(x) \bigr) \le \dim X - 2, 
\]
where $\operatorname{Fix}(\gamma)$ is the set of $\gamma$-fixed closed points in $Y$. 

\item
For a closed point $x \in X$, 
we say that the $G$-action on $X$ is \textit{virtually free} at $x$ if 
the $\gamma$-action on $X$ is virtually free at $x$ for all $\gamma \in G \setminus \{ 1_G \}$. 
\end{enumerate}
\end{defi}

Similar to Proposition \ref{prop:vf_equiv}, we have the following equivalent characterizations for the local definition of virtually free actions.

\begin{prop}\label{prop:vf_local_eq}
Suppose that a finite group $G$ acts on a normal variety $X$. 
For a closed point $x \in X$ and an element $\gamma \in G$, the following conditions are equivalent:
\begin{enumerate}
\item 
The $\gamma$-action on $X$ is virtually free at $x$. 

\item
There exists a $\gamma$-equivariant proper birational morphism $f \colon Y \to X$ from a normal variety $Y$ such that $f^{-1}(x)$ has no $\gamma$-fixed closed point. 

\item
For every $\gamma$-equivariant proper birational morphism $f \colon Y \to X$ from a smooth variety $Y$, $f^{-1}(x)$ has no $\gamma$-fixed closed point.
\end{enumerate}
\end{prop}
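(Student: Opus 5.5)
We prove the cycle (1) $\Rightarrow$ (3) $\Rightarrow$ (2) $\Rightarrow$ (1), mirroring \cite{NS4}*{Proposition 3.8}. The tools are equivariant resolution of singularities and the fact that the fixed locus of a finite-order automorphism of a smooth variety is smooth.

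For (1) $\Rightarrow$ (3), let $f \colon Y \to X$ be $\gamma$-equivariant, proper and birational with $Y$ smooth, and suppose some closed point $y \in f^{-1}(x)$ is $\gamma$-fixed. Then $\operatorname{Fix}(\gamma)$ is a smooth closed subvariety of $Y$ near $y$; let $F$ be the component through $y$. Blow up $Y$ along $F$ to get $Y_1 = \operatorname{Bl}_F Y$, which is smooth, $\gamma$-equivariant, and proper birational over $X$. Since $\gamma$ acts on the normal bundle $N_{F/Y}$ linearly with finite order, it has an eigenvector in the fiber over $y$, so $\gamma$ fixes a closed point $y_1$ of the exceptional divisor $E_1$ lying over $y \in f^{-1}(x)$. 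The fixed component $F_1 \ni y_1$ typically has smaller codimension, and this alone does not produce a fixed locus of codimension one inside the fiber. The cleaner route is to use the weighted (toric) blow-up adapted to the eigenvalue decomposition of $\gamma$ on $T_yY$. Write $\gamma = \operatorname{diag}(\xi^{e_1},\dots,\xi^{e_n})$ in equivariant local coordinates at $y$, and take the weighted blow-up at $y$ with weights $e_i$ (or a suitable $\gamma$-equivariant toric modification). On the exceptional divisor, $\gamma$ acts trivially, because $\gamma$ coincides with an element of the weighted $\mathbb{G}_m$ acting fiberwise. This gives a normal variety $Y_1 \to X$ on which $\gamma$ fixes an entire divisor contained in $f^{-1}(x)$, so $\dim(\operatorname{Fix}(\gamma)\cap f_1^{-1}(x)) = \dim X - 1$, contradicting (1). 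This is exactly the idea that the fixed locus can be made a divisor after an equivariant modification, and I expect it to be the main technical step. To make it precise I would work in the completion at $y$, or with a $\gamma$-equivariant étale chart to $\mathbb{A}^n$ with a diagonal action, and check that the weighted blow-up algebraizes, since it is a blow-up of a $\gamma$-stable ideal.

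(3) $\Rightarrow$ (2) is immediate: take a $\gamma$-equivariant resolution of $X$, which exists in characteristic zero.

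For (2) $\Rightarrow$ (1), let $f \colon Y \to X$ be as in (2) with $f^{-1}(x)$ free of $\gamma$-fixed points, and let $g \colon Y' \to X$ be an arbitrary $\gamma$-equivariant proper birational morphism from a normal $Y'$. Take the normalization $W$ of the closure of the graph of the birational map $Y' \dashrightarrow Y$. Then $W$ is $\gamma$-equivariant with proper birational morphisms $p \colon W \to Y'$ and $q \colon W \to Y$. The set $\operatorname{Fix}_W(\gamma) \cap (g\circ p)^{-1}(x)$ maps under $q$ into $\operatorname{Fix}_Y(\gamma)\cap f^{-1}(x) = \emptyset$, so it is empty. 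Now suppose $\operatorname{Fix}_{Y'}(\gamma)\cap g^{-1}(x)$ contained a subset $S$ of dimension $\dim X - 1$, which is then a prime divisor $S$ on $Y'$ fixed pointwise. Since $Y'$ is normal, $Y'$ is smooth at the generic point of $S$, and the strict transform of $S$ on $W$ is a divisor birational to $S$. Its generic points are fixed by $\gamma$, because $p$ is an isomorphism near the generic point of $S$ for a normal $Y'$ and proper birational $p$. So $W$ has $\gamma$-fixed closed points over $x$, a contradiction. Hence the fixed locus over $x$ in $Y'$ has dimension at most $\dim X - 2$, which is (1).
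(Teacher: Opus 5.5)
Your proof is correct and takes essentially the route the paper intends. The paper only says the argument of \cite{NS4}*{Proposition 3.8} carries over, and you supply that argument: equivariant resolution for (3)$\Rightarrow$(2); a common equivariant normal model together with the fact that a proper birational morphism onto a normal variety is an isomorphism at the generic point of a divisor for (2)$\Rightarrow$(1); and an equivariant modification of the smooth $Y$ creating a pointwise $\gamma$-fixed exceptional divisor over $y$ for (1)$\Rightarrow$(3).

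The only thing to tighten is the weighted blow-up. Take the exponents with $1\le e_i\le d$ rather than $0\le e_i\le d-1$, so that all weights are positive; then the exceptional divisor $E$ lies over $y$ and hence in the fibre over $x$, while still $\gamma=\lambda(\xi)$. Also, the triviality of $\gamma$ on $E$ can be checked directly on $Y$, with no completion or algebraization. For $I_m=(u^a : \sum a_ie_i\ge m)$ in $\gamma$-eigencoordinates $u_i$ at $y$, $\gamma$ acts on $I_m/I_{m+1}$ by the scalar $\xi^m$, and hence trivially on $E=\operatorname{Proj}\bigoplus_m I_m/I_{m+1}$.
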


\begin{proof}
The equivalence follows from the same argument as in \cite{NS4}*{Proposition 3.8}.
\end{proof}

The following proposition shows that the global definition of virtually free actions is compatible with its localized version.

\begin{prop}\label{prop:vf_global_local}
Suppose that a finite group $G$ acts on a normal variety $X$. 
Then, the following conditions are equivalent:
\begin{enumerate}
\item 
The $G$-action on $X$ is virtually free (see Definition \ref{defi:vf}). 

\item
The $G$-action on $X$ is virtually free at every closed point $x \in X$. 
\end{enumerate}
\end{prop}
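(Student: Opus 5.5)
We prove both implications through the characterizations by smooth equivariant models: Proposition \ref{prop:vf_equiv} for the global notion and Proposition \ref{prop:vf_local_eq} for the local one. The main input is the existence of a $G$-equivariant resolution $f \colon Y \to X$ with $Y$ smooth (equivariant resolution of singularities in characteristic zero). Since $G$ acts on $Y$, such an $f$ is in particular $\gamma$-equivariant for every $\gamma \in G$, so Proposition \ref{prop:vf_local_eq}(3) applies to it.

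For (1) $\Rightarrow$ (2), assume the $G$-action on $X$ is virtually free and take a $G$-equivariant resolution $f \colon Y \to X$. By Proposition \ref{prop:vf_equiv}(3), the $G$-action on $Y$ is free. Hence for each $\gamma \in G \setminus \{1_G\}$, no closed point of $Y$ is $\gamma$-fixed; in particular no closed point of $f^{-1}(x)$ is, for any closed point $x \in X$. Condition (2) of Proposition \ref{prop:vf_local_eq} therefore holds for $\gamma$ at $x$, so the $\gamma$-action is virtually free at $x$. Since $\gamma \neq 1_G$ and $x$ were arbitrary, the $G$-action is virtually free at every closed point.

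For (2) $\Rightarrow$ (1), assume the $G$-action is virtually free at every closed point, and again fix a $G$-equivariant resolution $f \colon Y \to X$. Let $y \in Y$ be a closed point and $\gamma \in G \setminus \{1_G\}$. Since $f$ is proper, $x := f(y)$ is a closed point of $X$. By hypothesis the $\gamma$-action is virtually free at $x$, and $f$ is a $\gamma$-equivariant proper birational morphism from a smooth variety, so Proposition \ref{prop:vf_local_eq}(3) says that $f^{-1}(x)$ contains no $\gamma$-fixed closed point. In particular $\gamma y \neq y$. Thus the stabilizer of every closed point of $Y$ is trivial, so the $G$-action on $Y$ is free on closed points. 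Proposition \ref{prop:vf_equiv}(2) then shows that the $G$-action on $X$ is virtually free.

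There is no real obstacle. The one point to check is that Proposition \ref{prop:vf_local_eq}, which is stated for $\gamma$-equivariant morphisms, may be applied to a resolution that is equivariant for the whole group $G$; it may, since $G$-equivariance implies $\gamma$-equivariance. The only external input is the existence of $G$-equivariant resolutions, which is already used implicitly in the proof of Proposition \ref{prop:vf_thin}.
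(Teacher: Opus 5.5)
Your proof is correct. For (2) $\Rightarrow$ (1) you do what the paper does; it simply cites Propositions \ref{prop:vf_equiv} and \ref{prop:vf_local_eq}, and you spell out that argument.

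For (1) $\Rightarrow$ (2) your route differs. You use the existence characterization in Proposition \ref{prop:vf_local_eq}(2): a $G$-equivariant model on which $G$ acts freely is in particular a $\gamma$-equivariant model with no $\gamma$-fixed point over $x$. You could even skip the equivariant resolution in this direction by taking the model from Proposition \ref{prop:vf_equiv}(2) directly.

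The paper instead argues by contraposition from Definition \ref{defi:vf_local}. A $\gamma$-equivariant model violating the local condition contains a $\gamma$-fixed prime divisor $E$ over $x$. This $E$ is realized on a $G$-equivariant model via \cite{NS4}*{Lemma 3.3}, and $\gamma$ still acts trivially on it by \cite{NS4}*{Lemma 3.6}. So the $G$-action on that model is not free in codimension one.

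Your version is shorter and purely formal once the two characterizations are available. However, it hands the real content to the implication (2) $\Rightarrow$ (1) of Proposition \ref{prop:vf_local_eq}. That content is the passage from one good $\gamma$-equivariant model to all $\gamma$-equivariant models, and the paper only sketches it by reference to \cite{NS4}*{Proposition 3.8}.

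The paper's argument makes the mechanism explicit. The obstruction to local virtual freeness is a $\gamma$-fixed divisorial valuation over $x$. Since it can be extracted $G$-equivariantly, it directly produces a $G$-equivariant model whose action is not free in codimension one, without going through the characterizations at all.
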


\begin{proof}
The implication (2) $\Rightarrow$ (1) follows from Propositions \ref{prop:vf_equiv} and \ref{prop:vf_local_eq}. 

We shall prove the implication (1) $\Rightarrow$ (2) by contraposition. 
Suppose that condition (2) does not hold. 
Then, by Definition \ref{defi:vf_local}, there exist an element $\gamma \in G \setminus \{ 1_G \}$, a closed point $x \in X$, and a $\gamma$-equivariant proper birational morphism $f \colon Y \to X$ from a normal variety $Y$ such that $\dim \bigl( \operatorname{Fix}(\gamma) \cap f^{-1}(x) \bigr) \ge \dim X - 1$. 
In particular, there exists a prime divisor $E$ on $Y$ which is mapped to $x$ and on which $\gamma$ acts trivially. 
By \cite{NS4}*{Lemma 3.3}, we can take a $G$-equivariant proper birational morphism $Y' \to X$ from a normal variety $Y'$ such that $E$ appears as a prime divisor on $Y'$. 
Furthermore, by \cite{NS4}*{Lemma 3.6}, the induced $\gamma$-action on $E \subset Y'$ is also trivial. 
Thus, the $G$-action on $Y'$ is not free in codimension one, and therefore, the $G$-action on $X$ is not virtually free. 
\end{proof}

\begin{defi}\label{defi:arc_local}
Suppose that a finite group $G$ acts on a normal variety $X$. 
Let $\gamma \in G$ be an element of order $d'$, and let $x'$ be a closed point in the quotient variety $X' = X/G$. 
\begin{enumerate}
\item
We define the \textit{local arc spaces} by
\[
X'_{\infty, x'} := \operatorname{Cont}^{\ge 1} (\mathfrak{m}_{x'}) \subset X'_{\infty}, \qquad
{X'}^{(\gamma)}_{\infty, x'} := {X'}^{(\gamma)}_{\infty} \cap \operatorname{Cont}^{\ge 1} (\mathfrak{m}_{x'}), 
\]
where $\mathfrak{m}_{x'} \subset \mathcal{O}_{X'}$ is the maximal ideal sheaf defining $x'$, and ${X'}^{(\gamma)}_{\infty}$ is the subset of $X'_{\infty}$ defined in Definition \ref{defi:dl1}. 
We also define 
\[
X^{[\gamma]}_{\infty, x'} := h_{\infty} ^{-1} \bigl( \operatorname{Cont}^{\ge 1} (\mathfrak{m}_{x'}) \bigr) \subset X^{[\gamma]}_{\infty}, 
\]
where $h_{\infty} \colon X^{[\gamma]}_{\infty} \to X'_{\infty}$ is the morphism defined in Definition \ref{defi:dl1}.

\item 
A $k$-arc $\varphi \in X'_{\infty}$ is called the \textit{trivial arc} associated to $x'$ if the corresponding morphism $\varphi \colon \operatorname{Spec} k[[t]] \to X'$ factors through the closed immersion $x' \hookrightarrow X'$. 
By definition, such $\varphi$ satisfies $\varphi \in X'_{\infty, x'}$, and
$\varphi$ is also called the \textit{trivial arc} of $X'_{\infty, x'}$. 

\item 
Let $x \in X$ be a $\gamma$-fixed closed point mapping to $x'$. 
A $k$-arc $\varphi \in X^{[\gamma]}_{\infty}$ is called the \textit{trivial arc} associated to $x$ if the corresponding $\gamma$-equivariant morphism $\widetilde{\varphi} \colon \operatorname{Spec} k[[t^{1/d'}]] \to X$ factors through the closed immersion $x \hookrightarrow X$ (see Proposition \ref{prop:bp}(3) for the correspondence). 
By definition, such $\varphi$ satisfies $\varphi \in X^{[\gamma]}_{\infty, x'}$, and 
$\varphi$ is also called a \textit{trivial arc} of $X^{[\gamma]}_{\infty, x'}$. 
We note that, geometrically, the $\gamma$-invariant closed subscheme $\{x\} \times T_{1/d'} \subset X \times T_{1/d'}$ descends to a section $s_x \colon T \to X^{[\gamma]}$, and this trivial arc is exactly the arc given by the composition $\operatorname{Spec} k[[t]] \to T \xrightarrow{s_x} X^{[\gamma]}$.
\end{enumerate}
\end{defi}

The following lemma provides a local version of Corollary \ref{cor:empty}. 
The existence of a trivial arc in $X^{[\gamma]}_{\infty, x'}$ naturally characterizes whether the corresponding fiber over $x'$ contains a $\gamma$-fixed point. 

\begin{lem}\label{lem:empty_local}
Suppose that a finite group $G$ acts on a normal variety $X$. 
Let $\gamma \in G$ be an element, and let $x'$ be a closed point in the quotient variety $X' = X/G$. 
Then, the following conditions are equivalent: 
\begin{enumerate}
\item
$X^{[\gamma]} _{\infty, x'} \neq \emptyset$. 

\item
There exists a $\gamma$-fixed closed point $x \in X$ mapping to $x'$. 
\end{enumerate}
\end{lem}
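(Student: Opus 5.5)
The plan is to mimic the proof of Corollary \ref{cor:empty}, using the description of $K$-arcs of $X^{[\gamma]}$ in Proposition \ref{prop:bp}(3) together with the definition $X^{[\gamma]}_{\infty, x'} = h_{\infty}^{-1}\bigl(\operatorname{Cont}^{\ge 1}(\mathfrak{m}_{x'})\bigr)$. Let $d'$ be the order of $\gamma$ and $q \colon X \to X'$ the quotient map.

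For (2) $\Rightarrow$ (1), take a $\gamma$-fixed closed point $x$ with $q(x) = x'$. The constant morphism $\operatorname{Spec} k[[t^{1/d'}]] \to X$ with image $x$ is $\gamma$-equivariant, so by Proposition \ref{prop:bp}(3) it defines a $k$-arc $\varphi \in X^{[\gamma]}_{\infty}$; this is the trivial arc of Definition \ref{defi:arc_local}(3). It remains to check that $h_\infty(\varphi) \in \operatorname{Cont}^{\ge 1}(\mathfrak{m}_{x'})$. By the construction of $h$ and the diagram in the proof of Lemma \ref{lem:image_arc}, $h_{\infty}(\varphi)$ is the arc $\operatorname{Spec} k[[t]] \to X'$ whose composition with $\operatorname{Spec} k[[t^{1/d'}]] \to \operatorname{Spec} k[[t]]$ equals $q \circ \widetilde{\varphi}$. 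That composition is constant at $x'$, and $k[[t]] \to k[[t^{1/d'}]]$ is injective, so $h_{\infty}(\varphi)$ is the trivial arc at $x'$. Hence $h_\infty(\varphi)$ lies in $\operatorname{Cont}^{\ge 1}(\mathfrak{m}_{x'})$, and $X^{[\gamma]}_{\infty, x'} \neq \emptyset$.

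For (1) $\Rightarrow$ (2), take a point of $X^{[\gamma]}_{\infty, x'}$, represented by a $K$-arc $\varphi$ for some field $K \supseteq k$. Proposition \ref{prop:bp}(3) gives a $\gamma$-equivariant $\widetilde{\varphi} \colon \operatorname{Spec} K[[t^{1/d'}]] \to X$ lifting $h_{\infty}(\varphi)$ as in Lemma \ref{lem:image_arc}. The closed point of $\operatorname{Spec} K[[t^{1/d'}]]$ is fixed by the $\gamma$-action $t^{1/d'} \mapsto \xi^{d/d'} t^{1/d'}$, so its image $y := \widetilde{\varphi}(0)$ is a $\gamma$-fixed point of $X$. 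The condition $\operatorname{ord}_{h_\infty(\varphi)}(\mathfrak{m}_{x'}) \ge 1$ says that the closed point of $\operatorname{Spec} K[[t]]$ maps to $x'$, so $q(y) = x'$.

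The one point needing care is that $y$ is a closed point of $X$, since a priori it is only a scheme-theoretic point with residue field inside $K$. I would handle this via finiteness: $q$ is finite, so every point of $q^{-1}(x')$ lies over a closed point and is therefore closed. Thus $y$ is a $\gamma$-fixed closed point mapping to $x'$. Equivalently, one can note that $\gamma$ permutes the finite set $q^{-1}(x')$ of closed points and that $y$ is one of them fixed by $\gamma$. I expect no real obstacle beyond this bookkeeping and the compatibility of $h_\infty$ with the lifts, which Lemma \ref{lem:image_arc} already supplies.
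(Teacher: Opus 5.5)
Your proof is correct and follows essentially the same route as the paper. For (1) $\Rightarrow$ (2) you lift via Proposition \ref{prop:bp}(3) to a $\gamma$-equivariant morphism whose center is $\gamma$-fixed and lies over $x'$, and for (2) $\Rightarrow$ (1) you use the trivial arc associated to $x$. Your treatment is slightly more careful than the paper's: you work with arbitrary $K$-arcs and deduce that the center is a closed point from the finiteness of $q$, whereas the paper works directly with $k$-arcs.
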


\begin{proof}
Suppose $X^{[\gamma]} _{\infty, x'} \neq \emptyset$. 
By definition, there exists an arc $\varphi \in X^{[\gamma]} _{\infty}$ which maps to an arc in $X'_{\infty}$ centered at $x'$. 
By Proposition \ref{prop:bp}(3), $\varphi$ corresponds to a $\gamma$-equivariant morphism $\widetilde{\varphi} \colon \operatorname{Spec} k[[t^{1/d'}]] \to X$. 
Let $x := \widetilde{\varphi}(0) \in X$ be the center of this morphism. 
Since the induced arc on $X'$ is centered at $x'$, the image of $x$ in $X'$ is exactly $x'$. 
Furthermore, by the $\gamma$-equivariance of $\widetilde{\varphi}$, its center $x$ must be a $\gamma$-fixed point. 

Conversely, suppose that there exists a $\gamma$-fixed closed point $x \in X$ mapping to $x'$. 
By Definition \ref{defi:arc_local}(3), the trivial arc associated to $x$ belongs to $X^{[\gamma]}_{\infty, x'}$. 
In particular, we have $X^{[\gamma]} _{\infty, x'} \neq \emptyset$.  
\end{proof}

The following proposition provides a local version of Corollary \ref{cor:vf_thin}.

\begin{thm}\label{thm:vf_thin_local}
Suppose that a finite group $G$ acts on a normal variety $X$. 
Let $\gamma \in G$ be an element, and let $x'$ be a closed point in the quotient variety $X' = X/G$. 
Then, the following conditions are equivalent: 
\begin{enumerate}
\item 
For every closed point $x \in X$ mapping to $x'$, the $\gamma$-action on $X$ is virtually free at $x$. 

\item
${X'}^{(\gamma)}_{\infty, x'}$ is a thin set of $X'_{\infty}$ see Remark \ref{rmk:identity_thin}). 

\item
$X^{[\gamma]}_{\infty, x'}$ is a thin set of $X^{[\gamma]}_{\infty}$. 
\end{enumerate}
\end{thm}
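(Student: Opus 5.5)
The plan is to mirror the global argument (Proposition \ref{prop:vf_thin} together with Proposition \ref{prop:thin_eq}), now keeping track of centers.

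\emph{(2) $\Leftrightarrow$ (3).} Apply Proposition \ref{prop:thin_eq} with $C = \operatorname{Cont}^{\ge 1}(\mathfrak{m}_{x'})$. By Definition \ref{defi:arc_local},
\[
h_\infty^{-1}(C) = X^{[\gamma]}_{\infty,x'} \quad\text{and}\quad C \cap {X'}^{(\gamma)}_\infty = {X'}^{(\gamma)}_{\infty,x'},
\]
so the equivalence is immediate. The real content is (1) $\Leftrightarrow$ (3).

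\emph{Preparation.} Since thinness and both conditions concern only a neighborhood of the fiber over $x'$, I would first pass to the $\gamma$-equivariant setting. Take a $G$-equivariant resolution $f\colon Y \to X$ that is an isomorphism outside a $G$-invariant closed subset $Z$. Such a resolution is in particular $\gamma$-equivariant. Functoriality of the construction $(-)^{[\gamma]}$ gives a proper morphism $f^{[\gamma]}\colon Y^{[\gamma]} \to X^{[\gamma]}$. It is an isomorphism over $X^{[\gamma]}\setminus Z^{[\gamma]}$, and $Y^{[\gamma]}|_{t\ne0}$ is smooth, since by Remark \ref{rmk:twist} it is étale-locally $Y \times (T\setminus\{0\})$.

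Let $\pi\colon X\to X'$ be the quotient map. An arc in $Y^{[\gamma]}_\infty$ corresponds by Proposition \ref{prop:bp}(3) to a $\gamma$-equivariant $\operatorname{Spec}K[[t^{1/d'}]]\to Y$. Its image lies in $X^{[\gamma]}_{\infty,x'}$ exactly when its center $y$ is a $\gamma$-fixed point with $f(y)\in \pi^{-1}(x')$. So $(f^{[\gamma]}_\infty)^{-1}(X^{[\gamma]}_{\infty,x'})\neq\emptyset$ if and only if some fiber $f^{-1}(x)$, with $x\mapsto x'$, contains a $\gamma$-fixed closed point. This is the local analogue of Corollary \ref{cor:empty} and Lemma \ref{lem:empty_local}, and is proved the same way: trivial arcs give the converse.

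\emph{(3) $\Rightarrow$ (1).} Suppose (1) fails for some $x\mapsto x'$. By Proposition \ref{prop:vf_local_eq}(3), applied to the smooth $Y$ (which is $\gamma$-equivariant), $f^{-1}(x)$ contains a $\gamma$-fixed point. Hence $(f^{[\gamma]}_\infty)^{-1}(X^{[\gamma]}_{\infty,x'})\neq\emptyset$. The set $X^{[\gamma]}_{\infty,x'}$ is a cylinder, being the preimage of a contact locus. Lemma \ref{lem:2.27'} then shows it is not thin.

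\emph{(1) $\Rightarrow$ (3).} Suppose $X^{[\gamma]}_{\infty,x'}$ is not thin. Then it is not contained in $Z^{[\gamma]}_\infty$, where $Z^{[\gamma]}$ is a closed subset of dimension $\le n$ whose arc space is thin, as in the proof of Proposition \ref{prop:vf_thin}. Pick an arc $\alpha\in X^{[\gamma]}_{\infty,x'}\setminus Z^{[\gamma]}_\infty$. By the valuative criterion it lifts to $Y^{[\gamma]}_\infty$. The preparatory step then yields a $\gamma$-fixed point in $f^{-1}(x)$ for some $x\mapsto x'$, and by Proposition \ref{prop:vf_local_eq}(3) the $\gamma$-action is not virtually free at that $x$.

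\emph{Main obstacle.} I expect the hardest step to be the fiberwise correspondence in the preparation. One must check that the center of the lifted $\gamma$-equivariant arc lies in the fiber over a point of $\pi^{-1}(x')$, and that this point may be chosen, so that the various $x$ in the $G$-orbit over $x'$ are correctly accounted for. One must also check that Lemma \ref{lem:2.27'} applies even though $Y^{[\gamma]}$ may be singular along $t=0$. The lemma only requires smoothness over $t\neq0$, so that hypothesis is satisfied.
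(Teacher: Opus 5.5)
Your proposal is correct and follows essentially the same route as the paper. You get (2)~$\Leftrightarrow$~(3) from Proposition~\ref{prop:thin_eq}, the direction (3)~$\Rightarrow$~(1) from trivial arcs and Lemma~\ref{lem:2.27'}, and (1)~$\Rightarrow$~(3) by lifting an arc outside $Z^{[\gamma]}_\infty$ via the valuative criterion. The only cosmetic difference is that you fix one $G$-equivariant resolution throughout, whereas the paper takes a $\gamma$-equivariant one. This is legitimate, because if the $\gamma$-action is not virtually free at $x$, the failure of Proposition~\ref{prop:vf_local_eq}(2) puts a $\gamma$-fixed point over $x$ on every $\gamma$-equivariant normal modification; strictly, that step should cite (2) rather than (3).
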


\begin{proof}
The equivalence (2) $\Leftrightarrow$ (3) follows from Proposition \ref{prop:thin_eq}. 

Next, we prove the implication (1) $\Rightarrow$ (3) by contraposition. 
Suppose that $X^{[\gamma]}_{\infty, x'}$ is not a thin set of $X^{[\gamma]}_{\infty}$. 
Let $f \colon Y \to X$ be an arbitrary $\gamma$-equivariant proper birational morphism from a smooth variety $Y$, 
and let $Z \subsetneq X$ be a $\gamma$-invariant proper closed subset such that $f$ is an isomorphism outside $Z$. 
Then, we can identify $Z^{[\gamma]}$ with a closed subset of $X^{[\gamma]}$, 
and hence, $Z^{[\gamma]}_{\infty}$ with a closed subset of $X^{[\gamma]}_{\infty}$. 
Since $Z^{[\gamma]}_{\infty}$ is a thin set by definition, but $X^{[\gamma]}_{\infty, x'}$ is not a thin set of $X^{[\gamma]}_{\infty}$, there exists an arc $\alpha \in X^{[\gamma]}_{\infty, x'} \setminus Z^{[\gamma]}_{\infty}$. 
Since the induced proper $k[t]$-morphism $Y^{[\gamma]} \to X^{[\gamma]}$ is an isomorphism over $X^{[\gamma]} \setminus Z^{[\gamma]}$, the valuative criterion of properness guarantees that the arc $\alpha$ lifts to an arc $\beta \in Y^{[\gamma]}_{\infty}$. 
Let $y \in Y$ be the $\gamma$-fixed point corresponding to the center of $\beta$ evaluated at $t=0$ (see Lemma \ref{lem:empty_local}). 
Let $x := f(y) \in X$ be its image. 
Because $\alpha$ maps to an arc centered at $x'$ in $X'$, its center $x$ must map to $x'$. 
Furthermore, since $f$ is $\gamma$-equivariant, $x$ is also a $\gamma$-fixed point. 
Thus, we have found a closed point $x \in X$ mapping to $x'$ such that $f^{-1}(x)$ contains a $\gamma$-fixed point $y$. 
By Proposition \ref{prop:vf_local_eq}, this implies that the $\gamma$-action on $X$ is not virtually free at $x$.

Next, we prove the implication (3) $\Rightarrow$ (1) by contraposition. 
Suppose that there exists a closed point $x \in X$ mapping to $x'$ such that the $\gamma$-action on $X$ is not virtually free at $x$. 
By Proposition \ref{prop:vf_local_eq}, there exists a $\gamma$-equivariant proper birational morphism $f \colon Y \to X$ from a smooth variety $Y$ such that $f^{-1}(x)$ contains a $\gamma$-fixed closed point $y$. 
Because $y$ is a $\gamma$-fixed point, the trivial arc $\beta \in Y^{[\gamma]}_{\infty}$ associated to $y$ exists. 
Then, by definition, its image $f^{[\gamma]}_{\infty} (\beta)$ is the trivial arc of $X^{[\gamma]}_{\infty, x'}$ associated to $x$, where $f^{[\gamma]} \colon Y^{[\gamma]} \to X^{[\gamma]}$ is the $k[t]$-morphism induced by $f$. 
In particular, we have $\bigl( f^{[\gamma]}_{\infty} \bigr)^{-1} \bigl( X^{[\gamma]}_{\infty, x'} \bigr) \neq \emptyset$. 
Therefore, by Lemma \ref{lem:2.27'}, the cylinder $X^{[\gamma]}_{\infty, x'}$ is not a thin set of $X^{[\gamma]}_{\infty}$.
\end{proof}

The following is a local version of Theorem \ref{thm:kltcase}.

\begin{thm}\label{thm:klt_local}
Suppose that a finite group $G$ acts on a klt variety $X$. 
Let $\gamma \in G$ be an element. 
If $x \in X$ is a $\gamma$-fixed closed point, then the $\gamma$-action on $X$ is not virtually free at $x$. 
\end{thm}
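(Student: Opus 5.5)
By Proposition \ref{prop:vf_local_eq}, it suffices to fix a single $\gamma$-equivariant resolution $f \colon Y \to X$ with $Y$ smooth and show that $f^{-1}(x)$ contains a $\gamma$-fixed closed point. The first step is to reduce to the connectedness and topology of the fiber $F := f^{-1}(x)$. Since $\gamma$ fixes $x$ and $f$ is $\gamma$-equivariant, $\gamma$ acts on the proper variety $F$. Since $X$ is klt, we may invoke one of the two deep inputs mentioned in the introduction. The first is Takayama's theorem \cite{Tak03}*{Theorem 1.1}: $F$ is simply connected, and more precisely a neighborhood $f^{-1}(U)$ of the fiber has the homotopy type controlled by $F$. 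The second is Hacon--McKernan \cite{HM07}: $F$ is rationally chain connected.

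I would take the route via the cyclic cover. Suppose, for contradiction, that $\gamma$ acts on $f^{-1}(x)$ without fixed points. Replacing $\gamma$ by a suitable power of prime order $p$ (a power of $\gamma$ that still has no fixed point on $F$ exists, since $\mathrm{Fix}(\gamma^{m}) \supseteq \mathrm{Fix}(\gamma)$ and we can choose a prime-order power whose fixed locus on $F$ is empty, or else argue inductively), we may assume $\langle \gamma \rangle \cong \mathbb{Z}/p$ acts freely on a $\gamma$-stable neighborhood $V$ of $F$ in $Y$. Shrinking $X$ to a small $\gamma$-invariant neighborhood of $x$, the quotient $V \to V/\langle \gamma \rangle$ is then étale. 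Meanwhile $V/\langle\gamma\rangle \to X/\langle \gamma\rangle$ is a resolution of $X/\langle\gamma\rangle$, which is again klt at the image $\bar x$, since quotients of klt singularities by finite groups are klt. The fiber of $V/\langle\gamma\rangle$ over $\bar x$ is $F/\langle \gamma\rangle$, and $F \to F/\langle\gamma\rangle$ is a nontrivial connected étale cover, because $F$ is connected by Zariski's main theorem, $X$ being normal. This contradicts Takayama's theorem applied to the klt point $\bar x \in X/\langle\gamma\rangle$, which asserts that $F/\langle\gamma\rangle$ is simply connected. The technical issue here is making the freeness of the action near $F$, and the étaleness, precise in the analytic or étale-local setting Takayama uses. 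Alternatively one avoids the prime-order reduction entirely: a free action of the full cyclic group gives an étale cover directly.

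The alternative argument uses rational connectedness. Consider the quotient $W := Y/\langle\gamma\rangle \to X/\langle\gamma\rangle$, and take a resolution $\widetilde W$ of it. The fiber over $\bar x$ is rationally chain connected by \cite{HM07}, since $X/\langle\gamma\rangle$ is klt. Now apply \cite{GHS03} to the family obtained by comparing $\widetilde{W}$ with the Hilbert/fixed-point structure, or more simply use the classical fact that a finite cyclic group acting on a smooth projective rationally connected setting has a fixed point via a section argument: take a $\gamma$-equivariant map from a curve $C$ with $\mathbb{Z}/d'$ action, form the associated fibration over $C/\langle \gamma\rangle$ with rationally connected fibers, and use Graber--Harris--Starr to get a section, whose lift yields a $\gamma$-fixed point in $F$.

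The main obstacle is exactly this step: turning the simple connectedness, or rational chain connectedness, of klt fibers into the existence of a fixed point on $F$. I would try the Takayama route first, since the étale-cover contradiction is cleanest, and handle the local-analytic details by working over a small Euclidean neighborhood of $x$.
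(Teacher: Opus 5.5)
There is a genuine gap, and it is in the reduction you treat as routine. From ``$\gamma$ has no fixed point on $F=f^{-1}(x)$'' you cannot pass to a prime-order power with no fixed point on $F$. Nor can you pass to ``$\langle\gamma\rangle$ acts freely near $F$''. The inclusion $\operatorname{Fix}(\gamma^m)\supseteq\operatorname{Fix}(\gamma)$ you cite says that powers have \emph{more} fixed points, so it points the wrong way. This is Lemma~\ref{lem:vf_properties}(1): non-virtual-freeness passes to powers, virtual freeness does not.

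Nothing in your argument excludes, say, $\gamma$ of order $6$ with no fixed point on $F$ while $\gamma^2$ and $\gamma^3$ both have fixed points there. Applying your argument to prime orders only tells you that $\gamma^2,\gamma^3$ have fixed points, which says nothing about $\gamma$. In that situation $F\to F/\langle\gamma\rangle$ is not \'etale, and simple connectedness gives no contradiction. For a finite group acting on a simply connected neighbourhood of $F$, the quotient's fundamental group is the group modulo the subgroup generated by elements having fixed points, and here that subgroup can be all of $\langle\gamma\rangle$.

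So your Takayama argument, as set up, proves the statement only when $\langle\gamma\rangle$ acts freely near $F$ (e.g.\ $\gamma$ of prime order). In the global statement \cite{NS4}*{Theorem 3.10} this issue does not arise: virtual freeness is assumed for every element, so Proposition~\ref{prop:vf_equiv} gives a resolution on which all of $\langle\gamma\rangle$ acts freely. Virtual freeness at $x$ is a condition on the single element $\gamma$ and gives no such control.

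Your second route has the right idea and is essentially the paper's proof, but as written it does not go through. The family you would build has fibres isomorphic to $F$, which is only rationally chain connected and typically reducible and singular. Graber--Harris--Starr does not apply to such a family directly, and the fixed-point fact for finite-order automorphisms of smooth projective rationally connected varieties does not apply to $F$.

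The missing construction is the klt $k[t]$-scheme $X^{[\gamma]}=(X\times T_{1/d'})/\langle\gamma\rangle$. Here $d'$ is the order of $\gamma$, and $\gamma$ acts on $t^{1/d'}$ by a primitive $d'$-th root of unity; $X^{[\gamma]}$ is klt because $X\times T_{1/d'}$ is. The $\gamma$-fixed point $x$ gives a section $s_x\colon T\to X^{[\gamma]}$, and $Y^{[\gamma]}\to X^{[\gamma]}$ is proper birational. Then \cite{HM07}*{Corollary 1.7(2)} lifts $s_x$ to a section of $Y^{[\gamma]}\to T$; this corollary combines rational chain connectedness of fibres with Graber--Harris--Starr in exactly the form needed. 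By Proposition~\ref{prop:bp}(2), the lifted section is a $\gamma$-equivariant morphism $T_{1/d'}\to Y$ lying over the constant map to $x$. Its value at $0$ is a $\gamma$-fixed point in $f^{-1}(x)$, and Proposition~\ref{prop:vf_local_eq} concludes. This argument handles all orders of $\gamma$ at once, which your \'etale-cover argument cannot.
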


\begin{proof}
Let $f \colon Y \to X$ be a $\gamma$-equivariant resolution, and let $f^{[\gamma]} \colon Y^{[\gamma]} \to X^{[\gamma]}$ be the induced $k[t]$-morphism. 
Since $x \in X$ is a $\gamma$-fixed point, it induces a section $\sigma := s_x \colon T \to X^{[\gamma]}$ as defined in Definition \ref{defi:arc_local}(3). 
Note that $X^{[\gamma]}$ is klt because $X$ is klt. 
Therefore, by \cite{HM07}*{Corollary 1.7(2)}, the section $\sigma$ lifts to a section $\widetilde{\sigma} \colon T \to Y^{[\gamma]}$ of $Y^{[\gamma]} \to T$. 
The point $\widetilde{\sigma}(0)$ then yields a $\gamma$-fixed point in $f^{-1}(x)$. 
By Proposition \ref{prop:vf_local_eq}(3), this proves that the $\gamma$-action is not virtually free at $x$.
\end{proof}

\begin{rmk}
Theorem \ref{thm:klt_local} can also be proved by the same argument as in the proof of \cite{NS4}*{Theorem 3.10}. 
Note that the original proof of \cite{NS4}*{Theorem 3.10} relies on Takayama's theorem \cite{Tak03}, which asserts the simple connectedness of the fibers of a resolution of a klt germ, 
instead of the result of Hacon and McKernan \cite{HM07}.
\end{rmk}

The following proposition generalizes Proposition \ref{prop:eq}. 

\begin{prop}\label{prop:eq_local}
Suppose that a finite group $G$ acts on a normal $\mathbb{Q}$-Gorenstein variety $X$, and that the action is free in codimension one.
Let $\mathfrak{a}$ be a non-zero $\mathbb{R}$-ideal sheaf on $X/G$. 
Let $x \in X$ be a closed point, and let $x' \in X/G$ be its image. 
Let $\operatorname{NVF}(X,x)$ be the set of elements $\gamma \in G$ such that the $\gamma$-action on $X$ is not virtually free at $x$.  
Then, we have 
\[
\operatorname{mld}_{x'}(X/G, \mathfrak{a}) = 
\min _{\gamma \in \operatorname{NVF}(X,x)}
\operatorname{mld}_{x' _{\gamma}} \bigl( X/\langle \gamma \rangle , \mathfrak{a} \mathcal{O}_{X/\langle \gamma \rangle} \bigr), 
\]
where $x'_{\gamma}$ denotes the image of $x$ in $X/\langle \gamma \rangle$. 
\end{prop}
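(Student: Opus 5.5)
The approach is to combine the formula of \cite{NS22}*{Remark 4.13},
\[
\operatorname{mld}_{x'}(X/G, \mathfrak{a}) = \min_{\gamma \in G} \operatorname{mld}_{x'_{\gamma}} \bigl( X/\langle \gamma \rangle, \mathfrak{a}\mathcal{O}_{X/\langle \gamma \rangle} \bigr),
\]
with a divisor-by-divisor analysis of inertia groups. By this formula the inequality ``$\le$'' is immediate, since $\operatorname{NVF}(X,x) \subset G$. So the content is the reverse inequality: for every prime divisor $F$ over $X' := X/G$ with center $x'$, I will find $\gamma \in \operatorname{NVF}(X,x)$ with $a_F(X', \mathfrak{a}) \ge \operatorname{mld}_{x'_\gamma}(X/\langle \gamma \rangle, \mathfrak{a}\mathcal{O}_{X/\langle \gamma \rangle})$.

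\emph{Step 1: a $G$-equivariant model.} Fix such an $F$. By \cite{NS4}*{Lemma 3.3}, I take a $G$-equivariant proper birational morphism $f\colon Y \to X$ from a normal variety such that $F$ appears as a prime divisor on $Y/G$. Let $E \subset Y$ be a prime divisor lying over $F$, and let $H \subset G$ be its inertia group, i.e.\ the elements of $G$ acting trivially on $E$. Since we are in characteristic zero, $H$ is cyclic, say $H = \langle \delta \rangle$, of order $r$, and $r$ is the ramification index of $Y \to Y/G$ along $E$. The center of $E$ on $X$ is a point $g x$ of the orbit $G x$. Because the action is free in codimension one, $K_X$ is the pullback of $K_{X'}$, and the standard ramification formula gives $a_E(X, \mathfrak{a}\mathcal{O}_X) = r\, a_F(X', \mathfrak{a})$.

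\emph{Step 2: pass to $X/\langle \delta \rangle$.} Let $F_\delta$ be the image of $E$ on $Y/\langle \delta \rangle$. The map $E \to F_\delta$ has ramification index $r = |\langle\delta\rangle|$, and the composite $E \to F$ has total ramification index $r$. Hence $Y/\langle \delta \rangle \to Y/G$ is unramified along $F_\delta$. Since $X/\langle\delta\rangle \to X'$ is also \'etale in codimension one, this gives $a_{F_\delta}(X/\langle \delta \rangle, \mathfrak{a}\mathcal{O}) = a_F(X', \mathfrak{a})$. Moreover $F_\delta$ has center the image of $gx$ in $X/\langle\delta\rangle$, so $a_F(X',\mathfrak{a}) \ge \operatorname{mld}$ of $X/\langle\delta\rangle$ at that point.

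\emph{Step 3: $\delta$ is not virtually free, and conjugation.} The element $\delta$ acts trivially on the divisor $E \subset f^{-1}(gx)$. Hence $\dim\bigl(\operatorname{Fix}(\delta) \cap f^{-1}(gx)\bigr) = \dim X - 1$, and by Definition \ref{defi:vf_local} $\delta$ is not virtually free at $gx$. Translating by $g$ gives an isomorphism $X/\langle \delta \rangle \cong X/\langle g^{-1}\delta g \rangle$ sending the image of $gx$ to the image of $x$, compatible with the maps to $X'$ and so with the pulled-back ideals. Likewise $\gamma := g^{-1}\delta g$ is not virtually free at $x$, so $\gamma \in \operatorname{NVF}(X,x)$. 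Therefore $a_F(X', \mathfrak{a}) \ge \operatorname{mld}_{x'_\gamma}(X/\langle\gamma\rangle, \mathfrak{a}\mathcal{O}_{X/\langle\gamma\rangle})$. Taking the infimum over $F$ gives ``$\ge$''. The case $\operatorname{mld} = -\infty$, and the $\dim X = 1$ convention, go through by the same divisor-wise comparison.

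The main obstacle I expect is Step 2: justifying carefully that $a_{F_\delta}$ on $X/\langle\delta\rangle$ equals $a_F$ on $X'$. This requires that the non-Galois cover $Y/\langle\delta\rangle \to Y/G$ is unramified along $F_\delta$, which I would deduce from multiplicativity of ramification indices in the tower $Y \to Y/\langle\delta\rangle \to Y/G$. It also requires that the canonical divisors match, which uses freeness in codimension one of $G$ on $X$ together with the $\mathbb{Q}$-Gorenstein hypothesis. Everything else is formal.
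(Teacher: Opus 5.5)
Your proposal is correct and follows essentially the same route as the paper, which simply reruns the argument of \cite{NS22}*{Remark 4.13}: for a divisor $F$ over $X/G$, one takes the cyclic inertia group $G_F$ of a divisor $E$ lying over $F$ and compares log discrepancies on $X/G_F$, exactly as in your Steps 1--2. The only new observation the paper adds is your Step 3: every element of $G_F$ fixes $E$ pointwise, and $E$ lies over a translate of $x$, so after conjugation $G_F \subset \operatorname{NVF}(X,x)$.
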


\begin{proof}
This can be proved by following the same argument as in \cite{NS22}*{Remark 4.13}. 
The key point is that the cyclic group $G_F$ appearing in the proof of \cite{NS22}*{Remark 4.13} consists of elements $\gamma$ for which the $\gamma$-action on $X$ is not virtually free at $x$.
\end{proof}

\begin{rmk}\label{rmk:eq_local}
In \cite{NS22}*{Remark 4.13}, the authors proved that 
\[
\operatorname{mld}_{x'}(X/G, \mathfrak{a}) = 
\min _{\gamma \in G}
\operatorname{mld}_{x' _{\gamma}} \bigl( X/\langle \gamma \rangle , \mathfrak{a} \mathcal{O}_{X/\langle \gamma \rangle} \bigr). 
\]
Proposition \ref{prop:eq_local} asserts that this minimum is attained at some $\gamma \in \operatorname{NVF}(X,x)$.
\end{rmk}

\section{Further properties of local arc spaces}\label{section:arc_local}

In this section, we collect several properties of local arc spaces and trivial arcs, which will be used in the proofs in the subsequent sections. 
In particular, Theorem \ref{thm:5.2} plays a crucial role in the proof of Theorem \ref{thm:PIA}.

\begin{thm}\label{thm:5.2}
Suppose that a finite group $G$ acts on a normal variety $X$. 
Let $x \in X$ be a $G$-fixed closed point, and let $x' \in X' = X/G$ be its image. 
Let $\gamma \in G$ be an element. 
If $X^{[\gamma]}_{\infty, x'}$ is not a thin set of $X^{[\gamma]}_{\infty}$, then any cylinder $C \subset X^{[\gamma]}_{\infty, x'}$ containing the trivial arc (associated to $x$) is also not a thin set of $X^{[\gamma]}_{\infty}$. 
\end{thm}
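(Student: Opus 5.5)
Since $x$ is $G$-fixed, it is the unique point of $X$ over $x'$. Hence, by Theorem \ref{thm:vf_thin_local}, the assumption that $X^{[\gamma]}_{\infty,x'}$ is not thin means that the $\gamma$-action on $X$ is not virtually free at $x$. So I would reduce the statement to the following: exhibit an arc of $Y^{[\gamma]}_{\infty}$ mapping into $C$, for some suitable resolution $Y$. Once such an arc exists, Lemma \ref{lem:2.27'}, applied to the proper birational $k[t]$-morphism $f^{[\gamma]}\colon Y^{[\gamma]} \to X^{[\gamma]}$ (note that $Y^{[\gamma]}|_{t\neq 0}$ is a twist of the smooth variety $Y$, hence smooth), shows that $C$ is not thin.

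Concretely, I would take a $\gamma$-equivariant (indeed $G$-equivariant) resolution $f\colon Y \to X$. By Proposition \ref{prop:vf_local_eq}(3), $f^{-1}(x)$ contains a $\gamma$-fixed closed point $y$. As in the proof of Theorem \ref{thm:vf_thin_local}, the trivial arc $\beta_y \in Y^{[\gamma]}_{\infty}$ associated to $y$ maps under $f^{[\gamma]}_{\infty}$ to the trivial arc associated to $x$, and this trivial arc lies in $C$ by hypothesis. Therefore $(f^{[\gamma]}_{\infty})^{-1}(C) \ni \beta_y$ is nonempty. Since $C$ is a cylinder in $X^{[\gamma]}_{\infty}$, Lemma \ref{lem:2.27'} then gives that $C$ is not thin.

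The main point to verify is that $\beta_y$ really maps to the trivial arc associated to $x$ in the sense of Definition \ref{defi:arc_local}(3). Under Proposition \ref{prop:bp}(3), $\beta_y$ corresponds to the constant $\gamma$-equivariant morphism $\operatorname{Spec} k[[t^{1/d'}]] \to Y$ with value $y$. Composing with $f$ gives the constant morphism with value $f(y) = x$, which is precisely the trivial arc associated to $x$; this uses only the naturality of the correspondence in Proposition \ref{prop:bp}(3) with respect to the equivariant morphism $f$. A secondary check is that Lemma \ref{lem:2.27'} applies: its hypotheses require $W = Y^{[\gamma]}$ and $X^{[\gamma]}$ to be $k[t]$-varieties, i.e. integral. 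I would argue that $Y \times T_{1/d'}$ is integral, so its quotient is integral, and likewise for $X^{[\gamma]}$. Properness and birationality of $f^{[\gamma]}$ follow from those of $f$, since $f \times \mathrm{id}$ is proper and birational and passing to the quotient by $\langle\gamma\rangle$ preserves both properties.
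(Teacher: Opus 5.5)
Your proposal is correct and is essentially the paper's proof. You use Theorem \ref{thm:vf_thin_local}, with $x$ the unique point over $x'$, to get that the $\gamma$-action is not virtually free at $x$; take a smooth $\gamma$-equivariant model $Y$ with a $\gamma$-fixed point $y \in f^{-1}(x)$ via Proposition \ref{prop:vf_local_eq}; observe that the trivial arc at $y$ maps to the trivial arc at $x$, which lies in $C$; and conclude with Lemma \ref{lem:2.27'}. (Your claim that an \emph{arbitrary} equivariant resolution works really comes from negating condition (2) of Proposition \ref{prop:vf_local_eq} rather than (3), which is harmless since the conditions are equivalent; the paper simply takes the $Y$ provided by the negation of (3).)
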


\begin{proof}
Suppose that $X^{[\gamma]}_{\infty, x'}$ is not a thin set of $X^{[\gamma]}_{\infty}$. 
Since $x$ is a $G$-fixed point, it is the unique point in $X$ mapping to $x'$. 
Thus, by Theorem \ref{thm:vf_thin_local}, the $\gamma$-action on $X$ is not virtually free at $x$. 
By Proposition \ref{prop:vf_local_eq}(3), there exists a $\gamma$-equivariant proper birational morphism $f \colon Y \to X$ from a smooth variety $Y$ such that $f^{-1}(x)$ contains a $\gamma$-fixed closed point $y$. 
Because $y$ is a $\gamma$-fixed point, the trivial arc $\alpha \in Y^{[\gamma]}_{\infty}$ associated to $y$ exists. 
Then, by definition, its image $f^{[\gamma]}_{\infty} (\alpha)$ is the trivial arc of $X^{[\gamma]}_{\infty, x'}$ associated to $x$, where $f^{[\gamma]} \colon Y^{[\gamma]} \to X^{[\gamma]}$ is the $k[t]$-morphism induced by $f$. 
By assumption, the cylinder $C$ contains the trivial arc associated to $x$, which implies $f^{[\gamma]}_{\infty} (\alpha) \in C$. 
In particular, we have $\bigl( f^{[\gamma]}_{\infty} \bigr)^{-1} (C) \neq \emptyset$. 
Therefore, by Lemma \ref{lem:2.27'}, we conclude that $C$ is not a thin set of $X^{[\gamma]}_{\infty}$. 
\end{proof}

The following lemma is an immediate consequence of the definition of virtually free actions. 

\begin{lem}\label{lem:vf_properties}
Suppose that a finite group $G$ acts on a normal variety $X$. 
Let $\gamma \in G$ be an element. 
\begin{enumerate}
\item 
For any integer $\ell \ge 1$, if the $\gamma$-action on $X$ is not virtually free at a closed point $x \in X$, then the $\gamma^\ell$-action is also not virtually free at $x$. 

\item 
The locus 
\[
Z_\gamma := \{ x \in X \mid \text{the } \gamma\text{-action on } X \text{ is not virtually free at } x \}
\]
is a closed subset of $X$. 

\item
Let $D$ be a $G$-invariant normal prime divisor on $X$. If the $\gamma$-action on $D$ is not virtually free at $x$, 
then the $\gamma$-action on $X$ is also not virtually free at $x$.
\end{enumerate}
\end{lem}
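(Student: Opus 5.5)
All three parts will be proved through the characterization in Proposition \ref{prop:vf_local_eq}. We use (2) to exhibit virtual freeness and (3), with a smooth $\gamma$-equivariant model, to exhibit non-virtual freeness.

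For (1), suppose the $\gamma$-action is not virtually free at $x$. Let $f\colon Y\to X$ be a $\gamma^\ell$-equivariant proper birational morphism from a smooth $Y$. We must show that $f^{-1}(x)$ has a $\gamma^\ell$-fixed point. This cannot be read off directly, since $Y$ need only be $\gamma^\ell$-equivariant. We therefore first dominate $Y$ by a $\langle\gamma\rangle$-equivariant resolution. Take the normalization of the closure of the graph of $\prod_{i} \gamma^i\circ f\circ\gamma^{-i}$ (a $\langle\gamma\rangle$-equivariant modification dominating $Y$), and then an equivariant resolution $Y'\to$ (that), obtaining $g\colon Y'\to Y$ that is $\gamma^\ell$-equivariant, with $Y'$ smooth and $\gamma$-equivariant over $X$. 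By Proposition \ref{prop:vf_local_eq}(3), applied to $\gamma$, there is a $\gamma$-fixed, hence $\gamma^\ell$-fixed, point $y'$ over $x$. Its image $g(y')$ is then $\gamma^\ell$-fixed and lies in $f^{-1}(x)$. Alternatively, (1) follows from Theorem \ref{thm:vf_thin_local} combined with a comparison of the arc spaces $X^{[\gamma]}_\infty$ and $X^{[\gamma^\ell]}_\infty$: a $\gamma$-equivariant arc $\operatorname{Spec}K[[t^{1/d'}]]\to X$ composed with $t^{1/d'}\mapsto (t^{1/d''})^{\cdots}$ yields a $\gamma^\ell$-equivariant arc. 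The geometric argument is simpler, so I would use it.

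For (2), we show that the complement of $Z_\gamma$ is open. Fix a single $\gamma$-equivariant resolution $f\colon Y\to X$. By Proposition \ref{prop:vf_local_eq}, a point $x$ lies in $Z_\gamma$ if and only if $f^{-1}(x)\cap \operatorname{Fix}_Y(\gamma)\neq\emptyset$: the forward direction uses (3), and the converse uses (2) in contrapositive. Hence $Z_\gamma=f(\operatorname{Fix}_Y(\gamma))$. The fixed locus is closed and $f$ is proper, so $Z_\gamma$ is closed.

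For (3), suppose the $\gamma$-action on $D$ is not virtually free at $x$, and let $f\colon Y\to X$ be a $\gamma$-equivariant proper birational morphism from a normal $Y$. Let $D_Y$ be the strict transform of $D$ and $\widetilde{D}_Y$ its normalization. Then $\widetilde{D}_Y\to D$ is a $\gamma$-equivariant proper birational morphism from a normal variety. By Proposition \ref{prop:vf_local_eq}(2) for $D$ (the contrapositive), there is a $\gamma$-fixed point of $\widetilde{D}_Y$ over $x$. Its image in $D_Y\subset Y$ is $\gamma$-fixed and lies in $f^{-1}(x)$. Since $f$ was arbitrary, Proposition \ref{prop:vf_local_eq}(2) shows that the $\gamma$-action on $X$ is not virtually free at $x$.

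The main obstacle is the equivariant domination step in (1), which requires a $\langle\gamma\rangle$-equivariant model dominating a merely $\gamma^\ell$-equivariant one. This is handled by the graph construction, exactly as in \cite{NS4}*{Lemma 3.3}. In (3), one should also check that the strict transform is $\gamma$-invariant; this holds because $D$ is $G$-invariant.
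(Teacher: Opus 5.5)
Your arguments for (2) and (3) are the paper's. In (2) you show $Z_\gamma=f(\operatorname{Fix}_Y(\gamma))$ for a single equivariant resolution; the paper takes it $G$-equivariant and you take it $\gamma$-equivariant, which makes no difference. Part (3) is the strict-transform argument behind the paper's one-line appeal to Proposition~\ref{prop:vf_local_eq}(2).

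You have the labels of Proposition~\ref{prop:vf_local_eq} swapped in (1) and (2). Producing a $\gamma$-fixed point over $x$ on a \emph{given} model from non-virtual-freeness is the contrapositive of (2)$\Rightarrow$(1). Concluding non-virtual-freeness from a fixed point on a smooth model is the contrapositive of (1)$\Rightarrow$(3). Since the proposition is an equivalence, this is harmless.

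For (1) you take a different and much longer route. The paper notes that (1) is immediate from Definition~\ref{defi:vf_local}, because non-virtual-freeness is an \emph{existential} condition. There is a $\gamma$-equivariant $f\colon Y\to X$ with $\dim\bigl(\operatorname{Fix}(\gamma)\cap f^{-1}(x)\bigr)\ge \dim X-1$; equivalently, by Proposition~\ref{prop:vf_local_eq}, a smooth such $Y$ with a $\gamma$-fixed point over $x$. The same $Y$, with the restricted $\langle\gamma^\ell\rangle$-action, is $\gamma^\ell$-equivariant. Since $\operatorname{Fix}(\gamma)\subset\operatorname{Fix}(\gamma^\ell)$, it witnesses non-virtual-freeness of $\gamma^\ell$ as well.

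Your claim that this ``cannot be read off directly'' comes from proving the \emph{universal} form instead: every smooth $\gamma^\ell$-equivariant $Y$ has a $\gamma^\ell$-fixed point over $x$. That choice is what forces the equivariant domination step. The step does work, although your formula $\prod_i\gamma^i\circ f\circ\gamma^{-i}$ is not literally meaningful, since $\gamma^{-i}$ need not act on $Y$. A correct version: let $d'$ be the order of $\gamma$ and $m=\gcd(\ell,d')$. Take the normalized closure in $X\times Y^m$ of the graph $x\mapsto\bigl(f^{-1}(x),f^{-1}(\gamma x),\dots,f^{-1}(\gamma^{m-1}x)\bigr)$, over a $\gamma$-invariant open set where $f$ is an isomorphism. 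Let $\gamma$ act by $(x,y_0,\dots,y_{m-1})\mapsto(\gamma x,y_1,\dots,y_{m-1},\gamma^m y_0)$. Then projection to $y_0$ is $\gamma^m$-equivariant, hence $\gamma^\ell$-equivariant.

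So your proof is correct, but the extra machinery buys nothing. By Proposition~\ref{prop:vf_local_eq} the universal and existential forms are equivalent, and your ``main obstacle'' disappears once you use the existential one.
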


\begin{proof}
Assertion (1) follows immediately from Definition \ref{defi:vf_local} and the fact that any $\gamma$-fixed point is automatically a $\gamma^\ell$-fixed point. 

Next, we prove (2). Let $f \colon Y \to X$ be a $G$-equivariant proper birational morphism from a smooth variety $Y$, 
and let $F_\gamma \subset Y$ be the $\gamma$-fixed locus. 
By Proposition \ref{prop:vf_local_eq}(3), the $\gamma$-action on $X$ is not virtually free at $x$ if and only if $f^{-1}(x)$ contains a $\gamma$-fixed point. 
This means that $Z_\gamma = f(F_\gamma)$. 
Since $F_\gamma$ is a closed subset of $Y$ and $f$ is a proper morphism, its image $f(F_\gamma)$ is a closed subset of $X$, which proves (2). 

(3) follows from Proposition \ref{prop:vf_local_eq}(2). 
\end{proof}

We introduce a natural action of the multiplicative monoid $k = k^* \cup \{0\}$ on the arc space $X^{[\gamma]}_{\infty}$. 

\begin{defi}\label{defi:k_action}
Suppose that a finite group $G$ acts on a variety $X$. 
Let $\gamma \in G$ be an element of order $d'$. 
For $c \in k$, let 
\[
\rho_c \colon \operatorname{Spec} k[[t^{1/d'}]] \to \operatorname{Spec} k[[t^{1/d'}]]
\]
be the morphism induced by the $k$-algebra endomorphism $t^{1/d'} \mapsto c t^{1/d'}$. 
Recall that a $k$-arc $\alpha \in X^{[\gamma]}_{\infty}$ corresponds to a $\gamma$-equivariant morphism $\alpha \colon \operatorname{Spec} k[[t^{1/d'}]] \to X$. 
We define the action of $c$ on $\alpha$, denoted by $c \cdot \alpha$, by the composition $\alpha \circ \rho_c$. 
We note that this action preserves the $\gamma$-equivariance since
$\rho_c$ is a $\gamma$-equivariant morphism. 
Thus, the set of $k$-arcs in $X^{[\gamma]}_{\infty}$ admits an action of the multiplicative monoid $k$. 
\end{defi}

\begin{rmk}\label{rmk:k_action}
For a $k$-scheme $Y$, the arc space $Y_\infty$ naturally admits a $k$-action by scaling the parameter $t$.
In contrast, for a general $k[t]$-scheme $Y$, the arc space $Y_{\infty}$ does not necessarily admit a $k$-action,
because the scaling of $t$ on $\operatorname{Spec} k[[t]]$ does not preserve the property of being a $k[t]$-morphism $\operatorname{Spec} k[[t]] \to Y$.
In our setting, although $X^{[\gamma]}$ is a $k[t]$-scheme, we can introduce a $k$-action on its arc space $X^{[\gamma]}_\infty$ by lifting the arcs to $\gamma$-equivariant $k$-morphisms $\operatorname{Spec} k[[t^{1/d'}]] \to X$. 
\end{rmk}

\begin{lem}\label{lem:k_action}
Suppose that a finite group $G$ acts on a variety $X$. 
Let $x \in X$ be a $G$-fixed closed point, and let $x' \in X' = X/G$ be its image. 
Let $\gamma \in G$ be an element. 
For any $k$-arc $\alpha \in X^{[\gamma]}_{\infty, x'}$, the $k$-arc $0 \cdot \alpha$ given by the $k$-action is exactly the trivial arc (associated to $x$). 
\end{lem}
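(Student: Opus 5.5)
Via the bijection of Proposition \ref{prop:bp}(3), we regard the $k$-arc $\alpha \in X^{[\gamma]}_{\infty, x'}$ as a $\gamma$-equivariant morphism $\widetilde{\alpha} \colon \operatorname{Spec} k[[t^{1/d'}]] \to X$. By Definition \ref{defi:k_action}, the arc $0 \cdot \alpha$ corresponds to $\widetilde{\alpha} \circ \rho_0$. Here $\rho_0$ is induced by the $k$-algebra endomorphism $t^{1/d'} \mapsto 0$ of $k[[t^{1/d'}]]$. This endomorphism factors as the residue map $k[[t^{1/d'}]] \to k$ followed by the inclusion $k \hookrightarrow k[[t^{1/d'}]]$. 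Geometrically, $\rho_0$ is therefore the composition
\[
\operatorname{Spec} k[[t^{1/d'}]] \to \operatorname{Spec} k \xrightarrow{0} \operatorname{Spec} k[[t^{1/d'}]],
\]
where the second map is the closed point. Consequently $\widetilde{\alpha} \circ \rho_0$ is the constant morphism onto the closed point $\widetilde{\alpha}(0) \in X$.

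It remains to identify $\widetilde{\alpha}(0)$ with $x$. The arc $h_\infty(\alpha) \in X'_\infty$ is the image of $\widetilde{\alpha}$ under $X \to X'$, seen through the diagram in the proof of Lemma \ref{lem:image_arc}. Since $\alpha \in X^{[\gamma]}_{\infty, x'}$, this arc lies in $\operatorname{Cont}^{\ge 1}(\mathfrak{m}_{x'})$, so its closed point maps to $x'$. The closed point of $\operatorname{Spec} k[[t^{1/d'}]]$ lies over the closed point of $\operatorname{Spec} k[[t]]$, so the image of $\widetilde{\alpha}(0)$ in $X'$ is $x'$. Because $x$ is $G$-fixed, the fiber of $X \to X'$ over $x'$ is the single $G$-orbit $\{x\}$. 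Hence $\widetilde{\alpha}(0) = x$, and it is a $k$-point since $\alpha$ is a $k$-arc.

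So $\widetilde{\alpha} \circ \rho_0$ factors through the closed immersion $x \hookrightarrow X$. By Definition \ref{defi:arc_local}(3), this says exactly that $0 \cdot \alpha$ is the trivial arc associated to $x$. This is consistent with Definition \ref{defi:k_action}, since a constant map is $\gamma$-equivariant: $x$ is $\gamma$-fixed. The only point needing care is that the correspondence of Proposition \ref{prop:bp}(3) is compatible with composition by $\rho_0$. This holds by definition, since the $k$-action is defined on the equivariant lifts. No step poses a real obstacle; the argument is essentially formal.
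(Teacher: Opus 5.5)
Your proof is correct and follows the paper's argument: since $x$ is $G$-fixed, the center $\widetilde{\alpha}(0)$ of the equivariant lift must be $x$, so $0 \cdot \alpha = \widetilde{\alpha} \circ \rho_0$ is the constant morphism at $x$, i.e.\ the trivial arc. You also spell out details the paper leaves implicit, namely that $\rho_0$ factors through the closed point and that $h_\infty$ is compatible with the lift, as in the diagram of Lemma \ref{lem:image_arc}.
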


\begin{proof}
Take an arbitrary $k$-arc $\alpha \in X^{[\gamma]}_{\infty, x'}$. 
Since $x$ is a $G$-fixed point, the center of $\alpha$ evaluated at $t=0$ is exactly $x$. 
By definition, the action of $0 \in k$ maps $\alpha$ to the constant morphism centered at $x$, 
which is exactly the trivial arc associated to $x$.
\end{proof}

\section{PIA conjecture for quotient singularities}\label{section:PIA}
In this section, we study the PIA conjecture for hyperquotient singularities. 
As discussed in Section \ref{section:cex}, the conjecture fails in general for this class of singularities. 
To understand this phenomenon, our main goal in this section is to establish a necessary and sufficient condition for the PIA conjecture to hold in this setting (Theorem \ref{thm:PIA_equiv}). 
As an application, we deduce Corollary \ref{cor:PIA_general}, which provides practical sufficient conditions, such as the equality $\operatorname{NVF}(X,x) = \operatorname{NVF}(D,x)$ (see Setup \ref{setup:nonlin}(2) for the definition of $\operatorname{NVF}$). 
This corollary generalizes our previous result \cite{NS25}*{Corollary 9.1} for the klt case (see Remark \ref{rmk:NVF}).

For the sake of simplicity, we first explain the case of linear actions.
\begin{setup}[Setting for Theorem \ref{thm:PIA}]\label{setup:lin}
Let $N$ be a positive integer, and let $R = k[x_1, \ldots, x_N]$. 
Let $G \subset \mathrm{GL}_N (k)$ be a finite subgroup of order $d$. 
Suppose that the linear $G$-action on $\mathbb{A}^N = \operatorname{Spec} R$ is free in codimension one. 
Let $\xi \in k$ be a primitive $d$-th root of unity. 
We define
\[
A := \mathbb{A}^N, \qquad
A' := \operatorname{Spec} R^G = A/G. 
\]
Let $x \in A$ be the origin, and let $x' \in A'$ be its image. 

Let $f_1, \ldots , f_{c} \in R^G$ be a regular sequence contained in the maximal ideal $(x_1, \ldots , x_N)^G$. 
We define 
\[
X := \operatorname{Spec} \bigl( R/(f_1, \ldots, f_{c}) \bigr) \subset A, \quad
X' := \operatorname{Spec} \bigl( R^G/(f_1, \ldots, f_{c}) \bigr) = X / G \subset A'. 
\]
Suppose that $X$ is normal. 
\end{setup}

In \cite{NS22}*{Theorem 5.1}, the following assertion was proved under the additional assumption that $X'$ is klt at $x'$. 

\begin{thm}\label{thm:PIA}
In the setting of Setup \ref{setup:lin}, 
let $\mathfrak{a} \subset R^G$ be a non-zero ideal, and let $\tau$ be a non-negative real number. 
Suppose that $\mathfrak{b} := \mathfrak{a} \mathcal{O}_{X'} \neq 0$. 
Suppose that the $\gamma$-action on $X$ is not virtually free at $x$ for all $\gamma \in G$. 
Then we have 
\[
\operatorname{mld}_{x'} \bigl( A', (f_1, \ldots, f_c) \mathfrak{a}^{\tau} \bigr) 
= \operatorname{mld}_{x'} \bigl( X', \mathfrak{b}^{\tau} \bigr). 
\]
\end{thm}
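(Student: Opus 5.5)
We will follow the strategy of \cite{NS22}*{Theorem 5.1}, isolating exactly where the klt hypothesis on $X'$ was used and replacing that step by Theorem \ref{thm:5.2}. The inequality $\operatorname{mld}_{x'}(A', (f_1,\ldots,f_c)\mathfrak{a}^\tau) \le \operatorname{mld}_{x'}(X',\mathfrak{b}^\tau)$ is the easy direction (inversion of adjunction in the known direction). It holds for hyperquotient singularities without any assumption, as in \cite{NS22}, so the content is the reverse inequality.

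For the reverse inequality, we first express both sides arc-theoretically via the Denef--Loeser decomposition (Remark \ref{rmk:dl}). Up to thin sets we have
\[
A'_{\infty,x'} = \bigsqcup_{[\gamma]} {A'}^{(\gamma)}_{\infty,x'}, \qquad X'_{\infty,x'} = \bigsqcup_{[\gamma]} {X'}^{(\gamma)}_{\infty,x'} .
\]
Each piece is computed on the $k[t]$-schemes $A^{(\gamma)}=\mathbb{A}^N\times T$ and $X^{(\gamma)}$. As in \cite{NS22}, the mld of $(X',\mathfrak{b}^\tau)$ at $x'$ is then a minimum over $\gamma$ of codimensions of contact loci in $X^{(\gamma)}_{\infty}$ centered over $x'$, corrected by the age-type shift $\operatorname{age}(\gamma)$ and the orders of the Jacobian and $\mathfrak{a}$. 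The key algebraic input is that $X^{(\gamma)}$ is cut out by the $c$ equations $F_i=\lambda_\gamma^*(f_i)$ in $\mathbb{A}^N\times T$ (Example \ref{eg:shiki}). With this, the codimension comparison of \cite{NS22} (the $k[t]$-analogue of Ein--Musta\c{t}\u{a}'s argument for lci) gives, for each $\gamma$, an inequality between the contribution of $\gamma$ to the ambient mld with $(F_1,\dots,F_c)$ and the contribution to the mld of $X'$. This comparison holds provided the relevant contact locus in $X^{(\gamma)}_{\infty}$ has the expected codimension. That is true when it is not thin, since codimension estimates on the $k[t]$-scheme are only meaningful off $(\operatorname{Jac})_\infty$, cf.\ Remark \ref{rmk:thin_cylinder_kt}.

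The main obstacle, and the only place klt was used before, is that ${X'}^{(\gamma)}_{\infty,x'}$, or the cylinders inside it that realize the minimum, might be thin. In that case $\gamma$ gives no control and the PIA comparison breaks. We handle this as follows. By hypothesis every $\gamma\in G$ lies in $\operatorname{NVF}(X,x)$, so by Theorem \ref{thm:vf_thin_local} (with $x$ the unique point over $x'$, as $x$ is $G$-fixed) each $X^{[\gamma]}_{\infty,x'}$ is non-thin. The relevant cylinders are the contact loci $C=\operatorname{Cont}^{\ge m}(\cdot)\cap X^{[\gamma]}_{\infty,x'}$, or their translates via $\lambda_{\gamma,\infty}$ using Propositions \ref{prop:()to[]} and \ref{prop:thin_eq2}. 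Such a cylinder is stable under the $k$-action of Definition \ref{defi:k_action}, because orders and contact conditions are invariant under $t^{1/d'}\mapsto ct^{1/d'}$ for $c\neq0$ and closed conditions persist in the limit. Hence, by Lemma \ref{lem:k_action}, a nonempty such $C$ contains the trivial arc $0\cdot\alpha$. I expect the delicate point to be verifying that the limit $c\to 0$ stays in $C$; to get this, I would use contact loci of the form $\operatorname{Cont}^{\ge m}$, which are closed. Theorem \ref{thm:5.2} then shows $C$ is not thin.

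Therefore every cylinder entering the computation is non-thin, and the codimension formulas of \cite{NS22} apply verbatim for every $\gamma$. Taking the minimum over $\gamma$ of the per-$\gamma$ inequalities, and using Remark \ref{rmk:eq_local} or Proposition \ref{prop:eq_local} on the ambient side, yields $\operatorname{mld}_{x'}(X',\mathfrak{b}^\tau)\le\operatorname{mld}_{x'}(A',(f_1,\ldots,f_c)\mathfrak{a}^\tau)$. This completes the equality.
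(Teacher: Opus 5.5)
\textbf{Gap: the non-thinness argument is applied to the wrong cylinders.} Your overall strategy matches the paper's: keep the proof of \cite{NS22}*{Theorem 5.1} and replace its klt input (Claim 5.2 there) by Theorem \ref{thm:vf_thin_local}, Theorem \ref{thm:5.2}, and the $k$-action with Lemma \ref{lem:k_action}. The problem is which sets you prove non-thin.

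The argument of \cite{NS22} needs the following, for every irreducible component $C$ of the ambient cylinder $C_{w,v,\gamma}\subset A^{(\gamma)}_\infty$ (the preimage of $C'_{w,v,\gamma}=\operatorname{Cont}^{\ge w}(f_1\cdots f_c)\cap\operatorname{Cont}^{\ge v}(\mathfrak{a})\cap\operatorname{Cont}^{\ge 1}(\mathfrak{m}_{x'})$): the intersection $C\cap X^{(\gamma)}_\infty$ is not thin in $X^{(\gamma)}_\infty$. The component-wise form is essential. The codimension of $C_{w,v,\gamma}$ is realized by one of its components, and the Ein--Musta\c{t}\u{a}-type transfer to $X^{(\gamma)}_\infty$ is carried out on that component. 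Knowing that contact loci $\operatorname{Cont}^{\ge m}(\cdot)\cap X^{[\gamma]}_{\infty,x'}$ are non-thin does not rule out that the minimizing ambient component meets $X^{(\gamma)}_\infty$ in a thin set. Moreover, the sets $C\cap X^{[\gamma]}_\infty$ are not contact loci. So your justification of $k$-invariance (``orders and contact conditions are invariant under reparametrization'') shows only that the whole contact locus is invariant, not each component.

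\textbf{The missing step.} Work in the ambient space:
\begin{enumerate}
\item The cylinder $h_\infty^{-1}(C'_{w,v,\gamma})\subset A^{[\gamma]}_\infty$ is $k^*$-invariant, because $h_\infty$ is equivariant for the action $t\mapsto c^{d'}t$ on $A'_\infty$.
\item Since $k^*$ is connected, it cannot permute the irreducible components nontrivially, so each component $C$ is itself $k^*$-invariant.
\item $C$ is closed, so Lemma \ref{lem:k_action} shows that $C$ contains the trivial arc of $A^{[\gamma]}_{\infty,x'}$.
\item That arc is also the trivial arc of $X^{[\gamma]}_{\infty,x'}$. Hence $C\cap X^{[\gamma]}_\infty$ is a cylinder in $X^{[\gamma]}_{\infty,x'}$ containing the trivial arc, and Theorem \ref{thm:5.2} gives that it is not thin.
\item Propositions \ref{prop:()to[]} and \ref{prop:thin_eq2} transport this to $C\cap X^{(\gamma)}_\infty$.
\end{enumerate}
With this correction your outline becomes the paper's proof.
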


\begin{proof}
In \cite{NS22}*{Theorem 5.1}, the corresponding assertion was proved under the stronger assumption that $X'$ is klt at $x'$, instead of the condition that the $\gamma$-action on $X$ is not virtually free at $x$ for all $\gamma \in G$. 
While we replace the klt assumption with our new condition, the basic structure of the proof remains parallel to theirs. 
Here, Theorem \ref{thm:5.2} plays a key role in making this replacement possible, so we detail only the necessary modifications below. 

For integers $w, v \ge 0$ and an element $\gamma \in G$, we set
\[
C'_{w,v,\gamma} := 
\operatorname{Cont}^{\ge w} \bigl( (f_1 \cdots f_c) \bigr) \cap
\operatorname{Cont}^{\ge v} (\mathfrak{a}) \cap
\operatorname{Cont}^{\ge 1} (\mathfrak{m}_{x'}) \subset A'_{\infty}. 
\]
By definition, $C'_{w,v,\gamma}$ is a closed cylinder of $A'_{\infty}$.

Let $A^{(\gamma)}_{\infty}$, $A^{[\gamma]}_{\infty}$, their closed subschemes $X^{(\gamma)}_{\infty}$, $X^{[\gamma]}_{\infty}$, and the morphisms $\lambda_{\gamma, \infty}$, $h_{\infty}$ be as defined in Sections \ref{section:vfc}, \ref{section:dl1}, and \ref{section:dl2}. 
Then, we have the following commutative diagram:
\[
\xymatrix@R=1.5em@C=3em{
A^{(\gamma)}_{\infty} \ar[r]^-{\lambda_{\gamma, \infty}} & A^{[\gamma]}_{\infty} \ar[r]^-{h_{\infty}} & A'_{\infty} \\
X^{(\gamma)}_{\infty} \ar[r] \ar@{}[u]|-{\rotatebox{90}{$\subset$}} & X^{[\gamma]}_{\infty} \ar[r] \ar@{}[u]|-{\rotatebox{90}{$\subset$}} & X'_{\infty} \ar@{}[u]|-{\rotatebox{90}{$\subset$}}
}
\]

In our notation, the set denoted by $C_{w,v,\gamma}$ in the proof of \cite{NS22}*{Theorem 5.1} corresponds to the inverse image of $C'_{w,v,\gamma}$ in $A^{(\gamma)}_{\infty}$. 
Thus, we define 
\[
C_{w,v,\gamma} := (h_{\infty} \circ \lambda_{\gamma, \infty})^{-1} \bigl( C'_{w,v,\gamma} \bigr). 
\]

The key step in the proof of \cite{NS22}*{Theorem 5.1} is their Claim 5.2, which translates to our setting as follows: 
\begin{claim}[cf.\ \cite{NS22}*{Claim 5.2}]\label{claim:1}
For every irreducible component $C$ of $C_{w,v,\gamma}$, the intersection $C \cap X^{(\gamma)}_{\infty}$ is not a thin set of $X^{(\gamma)}_{\infty}$. 
\end{claim}

By Propositions \ref{prop:()to[]} and \ref{prop:thin_eq2}, Claim \ref{claim:1} reduces to the following claim: 
\begin{claim}\label{claim:2}
For every irreducible component $C$ of $h_{\infty}^{-1} \bigl( C'_{w,v,\gamma} \bigr)$, 
the intersection $C \cap X^{[\gamma]}_{\infty}$ is not a thin set of $X^{[\gamma]}_{\infty}$. 
\end{claim}

\begin{proof}[Proof of Claim \ref{claim:2}]
Since the $\gamma$-action is not virtually free at $x$ by assumption, 
the cylinder $X^{[\gamma]}_{\infty, x'}$ is not a thin set of $X^{[\gamma]}_{\infty}$ by Theorem \ref{thm:vf_thin_local}. 
By Theorem \ref{thm:5.2}, in order to show the claim, it is sufficient to show that $C \cap X^{[\gamma]}_{\infty}$ contains the trivial arc of $X^{[\gamma]}_{\infty, x'}$. 
Since the trivial arc of $X^{[\gamma]}_{\infty, x'}$ coincides with that of $A^{[\gamma]}_{\infty, x'}$, 
it is sufficient to show that $C$ contains the trivial arc of $A^{[\gamma]}_{\infty, x'}$. 

Recall that the $k^*$-action on $A^{[\gamma]}_{\infty}$ introduced in Definition \ref{defi:k_action} is defined by the reparametrization $t^{1/d'} \mapsto c t^{1/d'}$. 
By equipping $A'_{\infty}$ with the corresponding $k^*$-action defined by $t \mapsto c^{d'} t$, the induced morphism $h_{\infty}$ is $k^*$-equivariant. 
Since the contact loci are defined by $t$-adic orders, they are invariant under any reparametrization. 
Thus, the closed cylinder $C'_{w,v,\gamma} \subset A'_{\infty}$ is invariant under the $k^*$-action, which implies that its inverse image $h_{\infty}^{-1} \bigl( C'_{w,v,\gamma} \bigr)$ is invariant under the $k^*$-action on $A^{[\gamma]}_{\infty}$. 
Since $k^*$ is connected, its irreducible component $C$ is also $k^*$-invariant. 
Then, Lemma \ref{lem:k_action} and the closedness of $C$ guarantee that it contains the trivial arc of $A^{[\gamma]}_{\infty, x'}$. 
\end{proof}

This establishes Claim \ref{claim:1}. The rest of the proof of \cite{NS22}*{Theorem 5.1} now goes through without any changes. 
This concludes the proof of the theorem.
\end{proof}

Theorem \ref{thm:PIA} remains valid for general finite group actions, not necessarily restricted to linear actions.

\begin{setup}[General setting]\label{setup:nonlin}
\begin{enumerate}
\item
Suppose that a finite group $G$ acts on a smooth variety $A$, and that the action is free in codimension one. 
Let $A' := A / G$ be the quotient variety. 
Let $x \in A$ be a $G$-fixed closed point, and let $x' \in A'$ be its image. 
Let $X' \subset A'$ be a normal subvariety of codimension $c$, locally defined at $x'$ by $c$ equations $f_1, \ldots, f_c \in \mathfrak{m}_{A', x'}$. 
Let $X$ be the inverse image of $X'$ under the quotient morphism $A \to A'$. 

\item
Let $D'$ be a normal prime Cartier divisor on $X'$ passing through $x'$, and let $D$ be its inverse image in $X$. 
Let $\operatorname{NVF}(X,x)$ (resp.\ $\operatorname{NVF}(D,x)$) denote the set of elements $\gamma \in G$ such that the $\gamma$-action on $X$ (resp.\ $D$) is not virtually free at $x$. 

\item
Let $\mathfrak{b} \subset \mathcal{O}_{X'}$ be an $\mathbb{R}$-ideal sheaf. 
Suppose that $D'$ is not contained in the cosupport of the $\mathbb{R}$-ideal sheaf $\mathfrak{b}$. 
\end{enumerate}
\end{setup}

\begin{thm}\label{thm:PIA_nonlin}
In the setting of Setup \ref{setup:nonlin}(1), 
let $\mathfrak{a} \subset \mathcal{O}_{A'}$ be an ideal sheaf and let $\tau$ be a positive real number. 
Suppose that $\mathfrak{b} := \mathfrak{a} \mathcal{O}_{X'} \neq 0$. 
Suppose that the $\gamma$-action on $X$ is not virtually free at $x$ for all $\gamma \in G$. 
Then we have
\[
\operatorname{mld}_{x'} \bigl(A', (f_1 \cdots f_c) \mathfrak{a} ^{\tau} \bigr) 
= \operatorname{mld}_{x'} (X',\mathfrak{b}^{\tau}). 
\]
\end{thm}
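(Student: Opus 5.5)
We reduce Theorem \ref{thm:PIA_nonlin} to the linear case, Theorem \ref{thm:PIA}, by linearizing the action at the fixed point and passing to completions. Both minimal log discrepancies at $x'$ are determined by the formal (or étale-local) germ at $x'$: log discrepancies over $x'$ can be computed after an étale base change, and the mld at a closed point depends only on the completed local ring. In arc-space terms, the contact loci and codimensions used in \cite{NS22} only involve arcs centered at $x'$, hence only the complete local ring $\widehat{\mathcal{O}}_{A',x'}$.

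The first step is to linearize. Since $x$ is $G$-fixed and $A$ is smooth, $G$ acts on $T_xA \cong k^N$, and in characteristic zero we may choose a $G$-equivariant isomorphism $\widehat{\mathcal{O}}_{A,x} \cong k[[x_1,\ldots,x_N]]$ with linear $G$-action; lift generators of $\mathfrak{m}_x/\mathfrak{m}_x^2$ and average with the Reynolds operator. By Luna's étale slice theorem, we in fact get a $G$-equivariant étale morphism from a $G$-stable affine neighborhood of $x$ to $\mathbb{A}^N = T_xA$. This morphism sends $x$ to $0$, and the induced map on quotients is étale at $x'$. Freeness in codimension one is preserved, because the linear representation coincides with the action near $x$ and the fixed loci of the linear action are linear subspaces through $0$. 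Next, approximate $f_1,\ldots,f_c$ and generators of $\mathfrak{a}$ by $G$-invariant polynomials in $R^G$ modulo a high power of the maximal ideal. Since mld's are computed from finitely many truncation levels once $\operatorname{mld}$ is bounded (or is $-\infty$), high-order approximation does not change either side. This is the standard argument in \cite{NS22}*{Section 5}, which treats nonlinear actions after the linear case. Alternatively, we use the étale map directly and pull back.

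The main obstacle is to check that the hypothesis ``$\gamma$-action on $X$ not virtually free at $x$ for all $\gamma$'' transfers to the linear model $\tilde X$. This hypothesis is equivalent to the non-thinness of $X^{[\gamma]}_{\infty,x'}$ by Theorem \ref{thm:vf_thin_local}. That non-thinness is a property of the cylinder of arcs centered at $x'$, so it depends only on the formal germ. Under an étale $G$-equivariant map, $\gamma$-equivariant arcs $\operatorname{Spec}K[[t^{1/d'}]]\to X$ centered at $x$ correspond bijectively to those of the model centered at $0$. Proper subsets $Z$ correspond to proper subsets as well, by taking closures of images and preimages near $x$. Hence thinness is preserved.

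If we use approximation instead of étale maps, we argue with Proposition \ref{prop:vf_local_eq}. Take a $\gamma$-equivariant resolution of $X$ with a $\gamma$-fixed point over $x$. By Theorem \ref{thm:5.2}, non-thinness is witnessed by a cylinder of bounded level, and a cylinder condition at level $m$ is unchanged when the $f_i$ are changed modulo $\mathfrak{m}^{m+1}$, by the $(\star)_n$ structure. Once the hypothesis is transferred, Theorem \ref{thm:PIA} applied to the linear model gives the equality. Both sides then agree with the original ones by the étale or formal invariance of mld.
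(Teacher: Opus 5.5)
\textbf{There is a genuine gap.} The reduction to Theorem \ref{thm:PIA} fails by both of your routes.

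\textbf{The \'etale/formal route.} Linearizing at $x$ makes $G$ act linearly on $k[[x_1,\ldots,x_N]]$. But $f_1,\ldots,f_c$ and the generators of $\mathfrak{a}$ then become $G$-invariant power series (or functions on an \'etale neighbourhood of $x'$), not elements of $R^G$. So in general there is no ``linear model $\tilde X$'' cut out by invariant polynomials as in Setup \ref{setup:lin}, and Theorem \ref{thm:PIA} cannot be invoked. The hypothesis-transfer step you single out as the main obstacle therefore has no target.

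What is actually needed is the power-series version of the \cite{NS22} argument, namely \cite{NS25}*{Theorem 8.2}. The real work is to verify its key step \cite{NS25}*{Claim 8.3} without the klt assumption: every irreducible component of $C_{w,v,\gamma}$ must meet $\widehat{X}^{(\gamma)}_{\infty}$ outside the arc space of the Jacobian locus $\widehat{H}$. The paper never transfers the hypothesis to a linear model. Instead:
\begin{enumerate}
\item It proves Claim \ref{claim:2} directly on the algebraic $k[t]$-scheme $X^{[\gamma]}$. This uses Theorem \ref{thm:vf_thin_local}, Theorem \ref{thm:5.2}, and the $k^*$-action, which forces each closed $k^*$-invariant component to contain the trivial arc.
\item It transports this to $\widehat{X}^{(\gamma)} \subset \operatorname{Spec} R[[t]]$ through the factorization $\widehat{A}^{(\gamma)}_{\infty} \to A^{[\gamma]}_{\infty,x'} \to A'_{\infty,x'}$. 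The ingredients are the homeomorphism induced by $\lambda_{\gamma,\infty}$ and the agreement of the Jacobian ideals away from $t=0$.
\end{enumerate}
The functoriality of $X^{[\gamma]}$ is exactly what makes a linear polynomial model unnecessary.

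\textbf{The approximation route.} Its central step is unproved. You claim both minimal log discrepancies are unchanged when $f_i$ and $\mathfrak{a}$ are altered modulo a high power of the maximal ideal. This is not a standard fact.
\begin{enumerate}
\item On the ambient side it is an instance of the ideal-adic semi-continuity problem for minimal log discrepancies.
\item On the right-hand side you replace $X'$ itself by a different, possibly non-klt, hyperquotient singularity. No finite determinacy of $\operatorname{mld}_{x'}(X',\mathfrak{b}^{\tau})$ is available there, and the natural route to such a statement passes through the very equality you are proving.
\end{enumerate}
Nor is this argument in \cite{NS22}, which treats only linear actions. The nonlinear case \cite{NS25} works with power series rather than polynomial approximations.

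The hypothesis transfer in this route is also unjustified. You say non-thinness is ``unchanged at level $m$ by the $(\star)_n$ structure'', but arcs of $X^{[\gamma]}$ are exact solutions of the equations, not truncations. You would need a Hensel-type lifting argument, and you would also have to check that the approximated $\tilde X$ is still normal.
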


\begin{proof}
This assertion can be proved by combining the arguments in the proofs of Theorem \ref{thm:PIA} and \cite{NS25}*{Theorem 8.2}. 
We give an outline of the proof below. 
As before, we set $R := k[x_1, \ldots, x_N]$. 

By the argument at the beginning of the proof of \cite{NS25}*{Theorem 8.2}, there exist a linear $G$-action on $\widehat{R} := k[[x_1, \ldots , x_N]]$ and a $G$-equivariant \'{e}tale homomorphism $\mathcal{O}_{A,x} \to \widehat{R}$.

We fix $\gamma \in G$. 
Take a new basis $x_1^{(\gamma)}, \ldots, x_N^{(\gamma)}$ consisting of $k$-linear combinations of the original basis $x_1, \ldots , x_N$ such that the action of $\gamma$ is represented by the diagonal matrix $\operatorname{diag} \left( \xi ^{e_1}, \ldots , \xi ^{e_N} \right)$ with respect to this basis, where $1 \le e_i \le d$. 

We define a homomorphism $\lambda^* _{\gamma} \colon \widehat{R} \to R[[t^{1/d}]]$ by $\lambda^* _{\gamma} (x_i^{(\gamma)}) = t^{e_i/d} x_i^{(\gamma)}$. 
Restricting the composition $\mathcal{O}_{A, x} \to \widehat{R} \to R[[t^{1/d}]]$ to $\mathcal{O}_{A, x} ^G = \mathcal{O}_{A', x'}$, we obtain a homomorphism $\mathcal{O}_{A', x'} \to R[[t]]$. 
We define $\widehat{A}^{(\gamma)} := \operatorname{Spec} R[[t]]$, and let $\widehat{A}^{(\gamma)}_{\infty}$ denote its arc space as a $k[t]$-scheme. 
This homomorphism induces a morphism $\widehat{A}^{(\gamma)}_{\infty} \to A'_{\infty, x'}$. 

Let $\widehat{X}^{(\gamma)}$ be the subscheme of $\widehat{A}^{(\gamma)}$ defined by the images $F_1, \ldots , F_c \in R[[t]]$ of $f_1, \ldots , f_c$ under the homomorphism $\mathcal{O}_{A', x'} \to R[[t]]$. Then, we have the following diagram: 
\[
\xymatrix@R=1.5em@C=2em{
\widehat{A}^{(\gamma)}_{\infty} \ar[r] & A'_{\infty, x'} \\
\widehat{X}^{(\gamma)}_{\infty} \ar[r] \ar@{}[u]|-{\rotatebox{90}{$\subset$}} & X'_{\infty, x'} \ar@{}[u]|-{\rotatebox{90}{$\subset$}}
}
\]

Let $d'$ be the order of $\gamma$. 
Notice that the homomorphism $\mathcal{O}_{A, x} \to R[[t^{1/d}]]$ constructed above naturally factors through the polynomial rings obtained by adjoining $t^{1/d'}$ as follows: 
\[
\mathcal{O}_{A, x} \hookrightarrow \mathcal{O}_{A, x}[t^{1/d'}] \hookrightarrow \widehat{R}[t^{1/d'}] \xrightarrow{\lambda^* _{\gamma}} R[[t^{1/d}]]. 
\]
By taking the invariant subrings, we obtain a sequence of homomorphisms 
\[
\mathcal{O}_{A', x'} \to (\mathcal{O}_{A,x}[t^{1/d'}])^{\langle \gamma \rangle} \to R[[t]], 
\]
which geometrically induces the sequence of morphisms 
\[
\widehat{A}^{(\gamma)}_{\infty} \to A^{[\gamma]}_{\infty, x'} \to A'_{\infty, x'}. 
\]
Therefore, we obtain the following commutative diagram: 
\[
\xymatrix@R=1.5em@C=3em{
\widehat{A}^{(\gamma)}_{\infty} \ar[r]^-{\lambda_{\gamma, \infty}} & A^{[\gamma]}_{\infty, x'} \ar[r]^-{h_{\infty}} & A'_{\infty, x'} \\
\widehat{X}^{(\gamma)}_{\infty} \ar[r] \ar@{}[u]|-{\rotatebox{90}{$\subset$}} & X^{[\gamma]}_{\infty, x'} \ar[r] \ar@{}[u]|-{\rotatebox{90}{$\subset$}} & X'_{\infty, x'} \ar@{}[u]|-{\rotatebox{90}{$\subset$}}
}
\]
By the same argument as in Proposition \ref{prop:()to[]}, $\lambda_{\gamma, \infty}$ induces a natural homeomorphism $\widehat{A}^{(\gamma)}_{\infty}(K) \xrightarrow{\sim} A^{[\gamma]}_{\infty, x'}(K)$ for any field extension $K$ of $k$.

For integers $w, v \ge 0$ and an element $\gamma \in G$, we set
\[
C'_{w,v,\gamma} := 
\operatorname{Cont}^{\ge w} \bigl( (f_1 \cdots f_c) \bigr) \cap
\operatorname{Cont}^{\ge v} (\mathfrak{a}) \cap
\operatorname{Cont}^{\ge 1} (\mathfrak{m}_{x'}) \subset A'_{\infty, x'}. 
\]
We also define a cylinder $C_{w,v,\gamma} \subset \widehat{A}^{(\gamma)}_{\infty}$ by
\[
C_{w,v,\gamma} := (h_{\infty} \circ \lambda_{\gamma, \infty})^{-1} \bigl( C'_{w,v,\gamma} \bigr). 
\]

The key step in the proof of \cite{NS25}*{Theorem 8.2} is their Claim 8.3, which translates to our setting as follows: 
\begin{claim}[cf.\ \cite{NS25}*{Claim 8.3}]\label{claim:3}
Let $\operatorname{Jac}'_{\widehat{X}^{(\gamma)}/k[[t]]}$ denote the ideal sheaf on $\widehat{X}^{(\gamma)}$ generated by $c$-minors of the Jacobian matrix $\bigl( \partial F_i/\partial x_j \bigr)_{\substack{1 \le i \le c, \\ 1 \le j \le N}}$. 
Let $\widehat{H} \subset \widehat{X}^{(\gamma)}$ be the closed subscheme defined by $\operatorname{Jac}'_{\widehat{X}^{(\gamma)}/k[[t]]}$.
For every irreducible component $C$ of $C_{w,v,\gamma}$, we have 
\[
C \cap \widehat{X}^{(\gamma)}_{\infty} \not \subset \widehat{H}_{\infty}.
\] 
\end{claim}
\begin{proof}[Proof of Claim \ref{claim:3}]
By Claim \ref{claim:2}, 
$\lambda_{\gamma, \infty}(C) \cap X^{[\gamma]}_{\infty, x'}$ is not a thin set of $X^{[\gamma]}_{\infty}$. 
Then by definition, we have $\lambda_{\gamma, \infty}(C) \cap X^{[\gamma]}_{\infty, x'} \not \subset H_{\infty}$, where $H \subset X^{[\gamma]}$ is the closed subscheme defined by the relative Jacobian ideal $\operatorname{Jac}_{X^{[\gamma]}/k[t]}$. 
By the same observation as in Remark \ref{rmk:iso_out0}, we have 
\[
\operatorname{Jac}_{X^{[\gamma]}/k[t]} \mathcal{O}_{\widehat{X}^{(\gamma)}} = \operatorname{Jac}'_{\widehat{X}^{(\gamma)}/k[[t]]}
\]
outside $t = 0$. 
Therefore, the arc spaces $\widehat{H}_{\infty}$ and $H_{\infty}$ correspond to each other under  $\lambda_{\gamma, \infty}$ (cf.\ Remark \ref{rmk:dom}). 
Thus, the condition $\lambda_{\gamma, \infty}(C) \cap X^{[\gamma]}_{\infty, x'} \not \subset H_{\infty}$ implies that $C \cap \widehat{X}^{(\gamma)}_{\infty} \not \subset \widehat{H}_{\infty}$.
\end{proof}
The rest of the proof of \cite{NS25}*{Theorem 8.2} works without any changes. 
\end{proof}

\begin{rmk}
Theorem \ref{thm:PIA_nonlin} also holds for an arbitrary $\mathbb{R}$-ideal $\mathfrak{a}$ on $A'$. 
That is, we have
\[
\operatorname{mld}_{x'} \bigl(A', (f_1 \cdots f_c) \mathfrak{a} \bigr) 
= \operatorname{mld}_{x'} (X',\mathfrak{b})
\]
for $\mathfrak{b} := \mathfrak{a} \mathcal{O}_{X'}$. 
\end{rmk}

As shown in \cite{NS4}, there exist counterexamples to the PIA conjecture under the conditions of Setup \ref{setup:nonlin}(1)(2)(3) (see Example \ref{eg:counter_ex}). 
In this setting, we establish a necessary and sufficient condition for the PIA conjecture to hold (Theorem \ref{thm:PIA_equiv}). 
We begin with the following remark. 

\begin{rmk}\label{rmk:PIA_eq}
In the setting of Setup \ref{setup:nonlin}(1)(2)(3), it follows from \cite{NS22}*{Remark 4.13} that 
\begin{equation}
\operatorname{mld}_{x'} \bigl( X', \mathfrak{b} \mathcal{O}_{X'} (-D') \bigr) = 
\min _{\gamma \in G} \operatorname{mld}_{x'_{\gamma}} \bigl( X/\langle \gamma \rangle, \mathfrak{b} \mathcal{O}_{X/\langle \gamma \rangle} (-D_{\gamma}) \bigr), 
\tag{$\spadesuit$}\label{eq:spadesuit}
\end{equation}
where $x'_{\gamma}$ and $D_{\gamma}$ denote the images of $x$ and $D$ in $X/\langle \gamma \rangle$, respectively. 
By Proposition \ref{prop:eq_local}, the minimum in \eqref{eq:spadesuit} is attained at some $\gamma \in \operatorname{NVF}(X,x)$. 
We also note that the inclusion $\operatorname{NVF}(D,x) \subset \operatorname{NVF}(X,x)$ holds by Lemma \ref{lem:vf_properties}(3). 
\end{rmk}

\begin{thm}\label{thm:PIA_equiv}
In the setting of Setup \ref{setup:nonlin}(1)(2)(3) and Remark \ref{rmk:PIA_eq}, 
the following conditions are equivalent: 
\begin{enumerate}
\item[(i)]
The minimum in \eqref{eq:spadesuit} is attained at some $\gamma \in \operatorname{NVF}(D,x)$. 

\item[(ii)]
The PIA conjecture holds; that is, 
\[
\operatorname{mld}_{x'} \bigl( X', \mathfrak{b} \mathcal{O}_{X'} (-D') \bigr) = 
\operatorname{mld}_{x'} (D', \mathfrak{b} \mathcal{O}_{D'}). 
\]
\end{enumerate}
\end{thm}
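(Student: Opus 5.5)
The plan is to compare both sides of (ii) with the minimum in \eqref{eq:spadesuit}, working one cyclic group $\langle \gamma \rangle$ at a time, and using Theorem \ref{thm:PIA_nonlin} for cyclic subgroups whose elements all act non-virtually-freely on $D$.

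The first step is to decompose the right-hand side of (ii). The group $G$ acts on $D$, and the action is free in codimension one. Indeed, $D$ is the preimage of a Cartier divisor, so each of its components has codimension one in $X$, and the ramification locus of $X \to X'$ has codimension at least two in $X$. Since $D'$ is normal and $D$ is Cartier in $X$, I will treat $D$ as normal near $x$. Applying Proposition \ref{prop:eq_local} to the $G$-action on $D$ gives
\[
\operatorname{mld}_{x'}(D', \mathfrak{b}\mathcal{O}_{D'}) = \min_{\gamma \in \operatorname{NVF}(D,x)} \operatorname{mld}_{x'_\gamma}\bigl(D_\gamma, \mathfrak{b}\mathcal{O}_{D_\gamma}\bigr).
\]

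The second step is an equality for each fixed $\gamma \in \operatorname{NVF}(D,x)$. By Lemma \ref{lem:vf_properties}(1), every power $\gamma^\ell$ also lies in $\operatorname{NVF}(D,x)$, so no element of $\langle\gamma\rangle$ acts virtually freely on $D$ at $x$. I apply Theorem \ref{thm:PIA_nonlin} (in its $\mathbb{R}$-ideal form) to the group $\langle \gamma\rangle$, with $D \subset X$ playing the role of $X' \subset A'$; here $A$ is replaced by $X$ near $x$, or more precisely by taking $A$ with the $c+1$ equations defining $D$. This yields
\[
\operatorname{mld}_{x'_\gamma}\bigl(X/\langle\gamma\rangle, \mathfrak{b}\mathcal{O}(-D_\gamma)\bigr) = \operatorname{mld}_{x'_\gamma}\bigl(D_\gamma, \mathfrak{b}\mathcal{O}_{D_\gamma}\bigr).
\]
The main obstacle is that $X$ itself need not be smooth, whereas Theorem \ref{thm:PIA_nonlin} is stated for subvarieties of $A/\langle\gamma\rangle$. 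I will handle this by writing $D_\gamma \subset A/\langle\gamma\rangle$ as cut out by $f_1,\dots,f_c,g$, where $g$ is a local equation of $D'$. I then apply inversion of adjunction twice: once for $D_\gamma \subset A/\langle\gamma\rangle$, which is legitimate by the hypothesis on $\gamma$ together with Lemma \ref{lem:vf_properties}(1), and once for $X/\langle\gamma\rangle \subset A/\langle\gamma\rangle$, which is legitimate because $\operatorname{NVF}(D,x)\subset \operatorname{NVF}(X,x)$ by Lemma \ref{lem:vf_properties}(3). Both mlds then equal $\operatorname{mld}$ of $A/\langle\gamma\rangle$ with the boundary $(f_1\cdots f_c\, g)\,\mathfrak{a}$. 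I also need to check that the nontriviality hypothesis on $\mathfrak{b}$ (that $D'$ is not in its cosupport) passes to the cyclic quotients, and that $\mathfrak{b}$ lifts to an ideal $\mathfrak{a}$ on $A'$.

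With these two steps, the comparison is as follows. Write $m := \min_{\gamma\in G}$ of \eqref{eq:spadesuit}, which equals the left-hand side of (ii), and write $m_D := \min_{\gamma\in\operatorname{NVF}(D,x)}$ of the same terms, which equals the right-hand side of (ii) by the first two steps. Since $\operatorname{NVF}(D,x)\subset G$, we always have $m \le m_D$. Condition (i) says precisely that $m = m_D$. Hence (i) holds if and only if the two sides of (ii) agree, which proves the equivalence.
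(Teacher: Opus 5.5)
Your proposal is correct and follows the paper's proof: you get the per-$\gamma$ identity for $\gamma\in\operatorname{NVF}(D,x)$ by applying Theorem \ref{thm:PIA_nonlin} twice through $\operatorname{mld}_{x'_\gamma}\bigl(A/\langle\gamma\rangle,(f_1\cdots f_c\cdot g)\mathfrak{a}\bigr)$, then apply Proposition \ref{prop:eq_local} to $D$, and the only (harmless) variation is that you compare directly with the minimum over all of $G$ in \eqref{eq:spadesuit} instead of first restricting to $\operatorname{NVF}(X,x)$. One correction concerns a side hypothesis: your argument that $G$ acts on $D$ freely in codimension one does not work as written, since a codimension-two subset of $X$ can be a divisor in $D$; the right reason is that at a smooth $\gamma$-fixed point $p$ of the normal variety $D$, the $\gamma$-invariant equations $f_1,\dots,f_c,g$ force $T_pD$ to contain every nontrivial eigenspace of $\gamma$ on $T_pA$, so $\operatorname{Fix}(\gamma)$ has codimension at least two in $D$ just as in $A$ (the paper leaves this implicit).
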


\begin{proof}
We first show that for any $\gamma \in \operatorname{NVF}(D,x)$, we have 
\begin{equation}\label{eq:term_eq}
\operatorname{mld}_{x'_{\gamma}} \bigl( X/\langle \gamma \rangle, \mathfrak{b} \mathcal{O}_{X/\langle \gamma \rangle} (-D_{\gamma}) \bigr)
= 
\operatorname{mld}_{x'_{\gamma}} \bigl( D/\langle \gamma \rangle, \mathfrak{b} \mathcal{O}_{D/\langle \gamma \rangle} \bigr). 
\end{equation}
Fix an element $\gamma \in \operatorname{NVF}(D,x)$. 
By Lemma \ref{lem:vf_properties}(1) and the inclusion $\operatorname{NVF}(D,x) \subset \operatorname{NVF}(X,x)$ from Lemma \ref{lem:vf_properties}(3), the action of every element in $\langle \gamma \rangle$ is not virtually free on both $D$ and $X$ at $x$. 
This justifies the application of Theorem \ref{thm:PIA_nonlin} to the $\langle \gamma \rangle$-actions on $X$ and $D$. 
Recall that $X'$ is locally defined by $f_1, \ldots , f_c \in \mathcal{O}_{A',x'}$ in $A'$ at $x'$. 
Take an element $g \in \mathcal{O}_{A',x'}$ such that its image $\overline{g} \in \mathcal{O}_{X',x'}$ locally defines $D'$, and take an $\mathbb{R}$-ideal sheaf $\mathfrak{a}$ on $A'$ such that $\mathfrak{b} = \mathfrak{a} \mathcal{O}_{X'}$. 
We may naturally regard $f_1, \ldots, f_c$ and $g$ as elements of $\mathcal{O}_{A/\langle \gamma \rangle, x'_{\gamma}}$. 
Then $f_1, \ldots, f_c$ define $X/\langle \gamma \rangle$ in $A/\langle \gamma \rangle$ locally at $x'_{\gamma}$, and the image of $g$ in $\mathcal{O}_{X/\langle \gamma \rangle, x'_{\gamma}}$ defines $D_{\gamma}$. 
Note also that $\mathfrak{b} \mathcal{O}_{X/\langle \gamma \rangle} = \mathfrak{a} \mathcal{O}_{X/\langle \gamma \rangle}$. 
By applying Theorem \ref{thm:PIA_nonlin} twice, we obtain
\begin{align*}
\operatorname{mld}_{x'_{\gamma}} \bigl( X/\langle \gamma \rangle, \mathfrak{b} \mathcal{O}_{X/\langle \gamma \rangle} (-D_{\gamma}) \bigr) 
&= \operatorname{mld}_{x'_{\gamma}} \bigl( A/\langle \gamma \rangle, (f_1 \cdots f_c \cdot g) \mathfrak{a} \mathcal{O}_{A/\langle \gamma \rangle} \bigr) \\
&= \operatorname{mld}_{x'_{\gamma}} \bigl( D/\langle \gamma \rangle, \mathfrak{b} \mathcal{O}_{D/\langle \gamma \rangle} \bigr), 
\end{align*}
which proves \eqref{eq:term_eq}. 

Now we prove the equivalence (i) $\Leftrightarrow$ (ii). 
By applying Proposition \ref{prop:eq_local} to the $G$-action on $D$, we have 
\[
\operatorname{mld}_{x'} ( D', \mathfrak{b} \mathcal{O}_{D'} ) = 
\min _{\gamma \in \operatorname{NVF}(D,x)} \operatorname{mld}_{x'_{\gamma}} \bigl( D/\langle \gamma \rangle, \mathfrak{b} \mathcal{O}_{D/\langle \gamma \rangle} \bigr). 
\]
Using equation \eqref{eq:term_eq}, we can rewrite this as 
\[
\operatorname{mld}_{x'} ( D', \mathfrak{b} \mathcal{O}_{D'} ) = 
\min _{\gamma \in \operatorname{NVF}(D,x)} \operatorname{mld}_{x'_{\gamma}} \bigl( X/\langle \gamma \rangle, \mathfrak{b} \mathcal{O}_{X/\langle \gamma \rangle} (-D_{\gamma}) \bigr). 
\]
On the other hand, by Proposition \ref{prop:eq_local}, we have
\[
\operatorname{mld}_{x'} \bigl( X', \mathfrak{b} \mathcal{O}_{X'} (-D') \bigr) = 
\min _{\gamma \in \operatorname{NVF}(X,x)} \operatorname{mld}_{x'_{\gamma}} \bigl( X/\langle \gamma \rangle, \mathfrak{b} \mathcal{O}_{X/\langle \gamma \rangle} (-D_{\gamma}) \bigr). 
\]
Since $\operatorname{NVF}(D,x) \subset \operatorname{NVF}(X,x)$, these two minimums coincide if and only if condition (i) holds. 
This completes the proof.
\end{proof}

\begin{cor}\label{cor:PIA_general}
In the setting of Setup \ref{setup:nonlin}(1)(2)(3),  
suppose that one of the following conditions holds: 
\begin{enumerate}
\item[(i)] $\operatorname{NVF}(X, x) = \operatorname{NVF}(D, x)$. 
\item[(ii)] $\operatorname{NVF}(D, x) = G$, that is, for every $\gamma \in G$, the $\gamma$-action on $D$ is not virtually free at $x$.
\end{enumerate}
Then the PIA conjecture holds; that is, 
\[
\operatorname{mld}_{x'} \bigl( X', \mathfrak{b} \mathcal{O}_{X'} (-D') \bigr) = 
\operatorname{mld}_{x'} (D', \mathfrak{b} \mathcal{O}_{D'}). 
\]
\end{cor}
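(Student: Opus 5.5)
The plan is to verify condition (i) of Theorem \ref{thm:PIA_equiv} in each case; the conclusion is then exactly condition (ii) of that theorem. Remark \ref{rmk:PIA_eq} (via Proposition \ref{prop:eq_local}) tells us that the minimum in \eqref{eq:spadesuit}, which a priori is taken over all $\gamma \in G$, is attained at some $\gamma_0 \in \operatorname{NVF}(X,x)$. It therefore suffices in each case to produce an element of $\operatorname{NVF}(D,x)$ at which the minimum is attained.

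Case (i). Assume $\operatorname{NVF}(X,x) = \operatorname{NVF}(D,x)$. Then the element $\gamma_0 \in \operatorname{NVF}(X,x)$ given by Remark \ref{rmk:PIA_eq} already lies in $\operatorname{NVF}(D,x)$, so condition (i) of Theorem \ref{thm:PIA_equiv} holds.

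Case (ii). Assume $\operatorname{NVF}(D,x) = G$. Combining this with the inclusions $\operatorname{NVF}(D,x) \subset \operatorname{NVF}(X,x) \subset G$ (the first from Lemma \ref{lem:vf_properties}(3)) gives $\operatorname{NVF}(X,x) = \operatorname{NVF}(D,x) = G$, which reduces case (ii) to case (i). Alternatively, one can argue directly: any minimizer $\gamma \in G$ of \eqref{eq:spadesuit} lies in $G = \operatorname{NVF}(D,x)$.

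I do not expect a genuine obstacle, since all the substance lies in Theorem \ref{thm:PIA_equiv}, which we may assume. The one point to watch is that the hypotheses of Theorem \ref{thm:PIA_equiv} are exactly Setup \ref{setup:nonlin}(1)(2)(3), which are assumed here, so the theorem applies without further checks.
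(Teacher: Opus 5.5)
Your proposal is correct and follows essentially the same route as the paper. In case (i) you read the conclusion off Theorem~\ref{thm:PIA_equiv}, and you usefully make explicit, via Remark~\ref{rmk:PIA_eq}, why the minimum is then attained in $\operatorname{NVF}(D,x)$; case (ii) is reduced to case (i) through the inclusions $\operatorname{NVF}(D,x) \subset \operatorname{NVF}(X,x) \subset G$, exactly as the paper does.
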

\begin{proof}
If condition (i) holds, the assertion follows from Theorem \ref{thm:PIA_equiv}. 
If condition (ii) holds, the inclusions $\operatorname{NVF}(D,x) \subset \operatorname{NVF}(X,x) \subset G$ implies that $\operatorname{NVF}(X, x) = \operatorname{NVF}(D, x)$, which reduces to condition (i). 
\end{proof}

\begin{rmk}\label{rmk:NVF}
In \cite{NS25}, the authors proved the assertion of Corollary \ref{cor:PIA_general} under the assumption that $D'$ is klt at $x'$ (see \cite{NS25}*{Corollary 9.1}).
This assumption implies that $\operatorname{NVF}(D, x) = G_x = G$ by Theorem \ref{thm:klt_local}, thereby satisfying condition (ii).
\end{rmk}

\begin{rmk}
Theorem \ref{thm:PIA} and Corollary \ref{cor:PIA_general} can be extended to the more general setting 
where the defining equations $f_1, \ldots, f_c$ of $X$ in $A$ are $G$-semi-invariant, as treated in \cite{NS26}*{Theorems 7.1 and 7.3}. 
\end{rmk}

\section{Remarks on counterexamples to the PIA conjecture} \label{section:cex}
In \cite{NS4}, the authors give a counterexample to the PIA conjecture. 
Here, we re-examine this example from the viewpoint of Theorem \ref{thm:PIA_equiv}. 

\begin{eg}[\cite{NS4}*{Example 1.4}]\label{eg:counter_ex}
Let $\xi \in k$ be a primitive cube root of unity.
Consider the polynomial ring $R := k[x_1, x_2, x_3, x_4, x_5]$ and the polynomial 
\[
f := x_1^3 + x_2^3 + x_3^3 \in R. 
\]
Let $G := \langle \gamma \rangle$ be a cyclic group of order $3$, where the generator $\gamma$ acts on the affine space $A := \operatorname{Spec}(R) = \mathbb{A}^5_k$ via the diagonal matrix 
\[
\gamma := \operatorname{diag}(1, \xi, \xi^2, \xi, \xi). 
\]
Since the polynomial $f$ is invariant under this $G$-action, the action naturally restricts to the hypersurface $X := \operatorname{Spec}(R/(f))$. 
Let 
\[
A' := A/G \quad \text{and} \quad X' := X/G
\]
be the corresponding quotient varieties, and let $x' \in A'$ denote the image of the origin of $A$. 
By \cite{NS4}*{Theorem 1.5, Appendix A}, we have 
\[
\operatorname{mld}_{x'}(A', X') = \frac{5}{3} \quad \text{and} \quad 
\operatorname{mld}_{x'}(X') = 2. 
\]
Therefore, we obtain 
\[
\operatorname{mld}_{x'}(A', X') < \operatorname{mld}_{x'}(X'), 
\]
which implies that the pair $(A', X')$ does not satisfy the PIA conjecture.
\end{eg}

We now revisit this example from the viewpoint of Theorem \ref{thm:PIA_equiv}.
In this example, we have
\[
\operatorname{NVF}(A,x) = G = \langle \gamma \rangle, \qquad
\operatorname{NVF}(X,x) = \{ 1_G \}. 
\]
One can also see that 
\[
\operatorname{mld}_{x}(A, X) = \operatorname{mld}_{x}(X) = 2, \qquad \operatorname{mld}_{x'}(A', X') = \frac{5}{3}. 
\]
This implies that the minimum in 
\[
\operatorname{mld}_{x'}(A', X') = \min _{\gamma' \in G} \operatorname{mld}_{x'_{\gamma '}} \bigl( A/\langle \gamma' \rangle, X/\langle \gamma' \rangle \bigr)
\]
is strictly less than the value at $\gamma' = 1_G$ (which is $\operatorname{mld}_{x}(A, X) = 2$). 
Therefore, the minimum is not attained at $\gamma' = 1_G$. 
Since $\operatorname{NVF}(X,x) = \{ 1_G \}$, the minimum is not attained at any element in $\operatorname{NVF}(X,x)$. 
By Theorem \ref{thm:PIA_equiv}, this failure of condition (i) explains why the PIA conjecture does not hold for this pair.

\section{LSC conjecture for hyperquotient singularities}\label{section:LSC}

In this section, we prove the LSC conjecture for hyperquotient singularities $Y'$ (see Theorem \ref{thm:LSC}). 
Note that $Y'$ is assumed to be klt in \cite{NS25}*{Theorem 9.2}.

\begin{thm}\label{thm:LSC}
Suppose that a finite subgroup $G$ acts on a smooth variety $X$. 
Let $X' := X / G$ be the quotient variety. 
Let $Y'$ be a normal subvariety of $X'$ of codimension $c$, and 
let $\mathfrak{a}$ be a non-zero $\mathbb{R}$-ideal sheaf on $Y'$. 
Suppose that $Y'$ is locally defined by $c$ equations in $X'$. 
Then the function 
\[
|Y'|_{\rm cl} \to \mathbb{R}_{\ge 0} \cup \{ - \infty \}; \quad x' \mapsto \operatorname{mld}_{x'}(Y',\mathfrak{a})
\]
is lower semi-continuous, where we denote by $|Y'|_{\rm cl}$ the set of all closed points of $Y'$ with the Zariski topology. 
\end{thm}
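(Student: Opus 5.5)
We prove lower semi-continuity at an arbitrary closed point $y' \in Y'$. That is, we show there is an open neighborhood $U$ of $y'$ such that $\operatorname{mld}_{z'}(Y',\mathfrak{a}) \ge \operatorname{mld}_{y'}(Y',\mathfrak{a})$ for all closed points $z' \in U$.

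\emph{Step 1: reduction to the fixed-point setting.} Pick $y \in X$ over $y'$ and let $H := G_y$ be its stabilizer. The map $X/H \to X/G$ is étale at the image of $y$, and minimal log discrepancies are étale-local. Replacing $X$ by an $H$-invariant affine neighborhood of $y$ and $G$ by $H$, we may assume $y$ is $G$-fixed. Let $Y \subset X$ be the inverse image of $Y'$; it is locally cut out by $c$ $G$-invariant equations $f_1,\dots,f_c$ near $y$.

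Since $X$ is smooth, we may further assume the $G$-action on $X$ is free in codimension one. The reflections generate a normal subgroup $R$, $X/R$ is smooth, and $G/R$ acts on it freely in codimension one. Replacing $X$ by $X/R$ reduces to the Setup \ref{setup:nonlin}(1) situation. Note that $Y$ need not be normal a priori; I would either assume normality of $Y$ as in the Setup, or pass to its normalization, checking that $Y$ is normal since $Y'$ is normal and $Y$ is a complete intersection, hence $S_2$, and is $R_1$ by freeness in codimension one.

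\emph{Step 2: the formula for nearby points.} For a closed point $z' \in Y'$ near $y'$, choose $z \in Y$ over it, with stabilizer $G_z \subset G$. Proposition \ref{prop:eq_local}, applied to $G_z$ acting near $z$ (localizing again étale), gives
\[
\operatorname{mld}_{z'}(Y',\mathfrak{a}) = \min_{\gamma \in \operatorname{NVF}(Y,z)} \operatorname{mld}_{z_\gamma}\bigl(Y/\langle\gamma\rangle, \mathfrak{a}\mathcal{O}_{Y/\langle\gamma\rangle}\bigr).
\]
For each $\gamma \in \operatorname{NVF}(Y,z)$, Lemma \ref{lem:vf_properties}(1) shows that every element of $\langle \gamma \rangle$ lies in $\operatorname{NVF}(Y,z)$. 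Hence Theorem \ref{thm:PIA_nonlin} applies to the $\langle\gamma\rangle$-action on $X$ near $z$, with $Y$ in the role of the subvariety, and yields
\[
\operatorname{mld}_{z_\gamma}\bigl(Y/\langle\gamma\rangle,\mathfrak{a}\bigr) = \operatorname{mld}_{z_\gamma}\bigl(X/\langle\gamma\rangle, (f_1\cdots f_c)\,\tilde{\mathfrak{a}}\bigr),
\]
where $\tilde{\mathfrak{a}}$ is a lift of $\mathfrak{a}$ to $X'$. The right-hand side is the mld of a pair on a quotient-singularity variety $X/\langle\gamma\rangle$, which is lower semi-continuous in the point by (\ref{conj:LSC}.4) \cite{Nak16}.

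\emph{Step 3: semicontinuity.} By Lemma \ref{lem:vf_properties}(2), each locus $Z_\gamma=\{z : \gamma \in \operatorname{NVF}(Y,z)\}$ is closed. So after shrinking to a neighborhood of $y$, we have $\operatorname{NVF}(Y,z) \subset \operatorname{NVF}(Y,y)$ for every $z$: an element not in $\operatorname{NVF}(Y,y)$ has $Z_\gamma$ avoiding $y$, hence avoiding a neighborhood. Then for $z$ near $y$,
\[
\operatorname{mld}_{z'}(Y',\mathfrak{a})=\min_{\gamma\in\operatorname{NVF}(Y,z)}\operatorname{mld}_{z_\gamma}\bigl(X/\langle\gamma\rangle,(f_1\cdots f_c)\tilde{\mathfrak{a}}\bigr) \ge \min_{\gamma\in \operatorname{NVF}(Y,y)} \operatorname{mld}_{z_\gamma}\bigl(X/\langle\gamma\rangle,\dots\bigr).
\]
Lower semi-continuity of each term on the finitely many quotients $X/\langle\gamma\rangle$ bounds this below by $\min_{\gamma\in\operatorname{NVF}(Y,y)}\operatorname{mld}_{y_\gamma}(\dots)$, which equals $\operatorname{mld}_{y'}(Y',\mathfrak{a})$ by Step 2 at $y$.

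\emph{Main obstacle.} The delicate point is the bookkeeping between $z$ and $y$. The stabilizer $G_z$ may be smaller than $G$, and Proposition \ref{prop:eq_local} at $z$ concerns $\gamma \in G_z$ with the étale-local quotient by $G_z$. I must check that the mld of $Y/\langle\gamma\rangle$ at the image of $z$ computed globally agrees with the one used, which holds because the $\gamma$ involved fix $z$. I must also make sure the lower semi-continuity on $X/\langle \gamma\rangle$ is applied to the point $z_\gamma$ lying over $z'$, using the points $z$ in a $G$-stable neighborhood. Handling the case $\operatorname{mld}=-\infty$ and the non-normal or codimension issues of Step 1 should be routine.
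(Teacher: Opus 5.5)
Your proposal is correct and follows essentially the same route as the paper. You reduce to a $G$-fixed point with the action free in codimension one. You then write $\operatorname{mld}_{z'}(Y',\mathfrak{a})$, via Proposition \ref{prop:eq_local}, Lemma \ref{lem:vf_properties}(1) and Theorem \ref{thm:PIA_nonlin}, as a minimum over $\operatorname{NVF}(Y,z)$ of minimal log discrepancies on the quotients $X/\langle\gamma\rangle$. Finally you conclude from the closedness of the loci $Z_\gamma$ together with \cite{Nak16}. The only difference is in packaging the last step: the paper uses auxiliary functions $g_\gamma$ equal to $\infty$ off $Z'_\gamma$, whereas you argue pointwise via $\operatorname{NVF}(Y,z)\subset\operatorname{NVF}(Y,y)$ near $y$.

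One point to tighten concerns the normality of $Y$, which the paper uses tacitly. The $R_1$ property does not follow from freeness in codimension one on $X$ alone; it follows from purity of the branch locus. Since $Y'$ is regular at its codimension-one points and is cut out by $c$ equations, $X'$ is regular there, so $X\to X'$ is \'{e}tale over those points. The same argument gives the freeness in codimension one of $G$ on $Y$ that Proposition \ref{prop:eq_local} requires.
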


\begin{proof}
First, we reduce the problem to the case where the $G$-action on $X$ is free in codimension one. 
Let $x \in X$ be a closed point, and let $x'$ denote its image in the quotient variety. 
Let $G_x \subset G$ be the stabilizer of $x$, and let $G_x ^{\rm pr} \subset G_x$ be the normal subgroup generated by the pseudo-reflections in $G_x$. 
Then, the natural morphism $X/G_x \to X/G$ is \'{e}tale in an open neighborhood of $x'$. 
Furthermore, we have a natural isomorphism $X/G_x \simeq (X/G_x ^{\rm pr})/(G_x/G_x ^{\rm pr})$. 
Since $G_x ^{\rm pr}$ is generated by pseudo-reflections, it follows from the Chevalley--Shephard--Todd theorem that the quotient variety $X/G_x ^{\rm pr}$ is smooth in an open neighborhood of $x'$. 
Therefore, by replacing $X$ with this open neighborhood of $x'$ in $X/G_x ^{\rm pr}$, and $G$ with the quotient group $G_x/G_x ^{\rm pr}$, 
we may assume without loss of generality that the $G$-action on $X$ is free in codimension one. 

Take an ideal $\mathfrak{b} \subset \mathcal{O}_{X'}$ such that $\mathfrak{b} \mathcal{O}_{Y'} = \mathfrak{a}$. 
Let $Y \subset X$ be the inverse image of $Y' \subset X'$. 

For each $\gamma \in G$, let $Z_\gamma \subset Y$ be the closed subset of points where the $\gamma$-action on $Y$ is not virtually free (see Lemma \ref{lem:vf_properties}(2)). 
Since the quotient map $Y \to Y'$ is finite, its image $Z'_\gamma$ in $Y'$ is also closed. 
We define a function $g_\gamma \colon |Y'|_{\rm cl} \to \mathbb{R}_{\ge 0} \cup \{ - \infty, \infty \}$ by 
\[
g_\gamma(x') := 
\begin{cases}
\operatorname{mld}_{x' _{\gamma}} \bigl(X/\langle \gamma \rangle , (f_1 \cdots f_c) \mathfrak{b}  \mathcal{O}_{X/\langle \gamma \rangle} \bigr) & \text{if } x' \in Z'_\gamma, \\
\infty & \text{if } x' \notin Z'_\gamma,
\end{cases}
\]
where $f_1, \ldots, f_c \in \mathcal{O}_{X',x'}$ are local equations of $Y'$ in $X'$, and $x'_{\gamma}$ denotes the image of $x'$ in $X/\langle \gamma \rangle$. 
By \cite{Nak16}*{Corollary 1.3}, the minimal log discrepancy on a quotient of a smooth variety is lower semi-continuous. 
Since $Z'_\gamma$ is a closed subset, the extended function $g_\gamma$ is lower semi-continuous on the whole $Y'$. 

Let $x \in Y$ be a closed point and let $x'$ be its image in $Y'$. 
Let $S_x$ be the set of elements $\gamma \in G$ such that the $\gamma$-action on $Y$ is not virtually free at $x$.  
By Proposition \ref{prop:eq_local} and Theorem \ref{thm:PIA_nonlin}, we have 
\begin{align*}
\operatorname{mld}_{x'}(Y', \mathfrak{a}) 
&= \min _{\gamma \in S_x} \operatorname{mld}_{x' _{\gamma}} (Y/\langle \gamma \rangle , \mathfrak{a} \mathcal{O}_{Y/\langle \gamma \rangle}) \\
&= \min _{\gamma \in S_x} \operatorname{mld}_{x' _{\gamma}} \bigl(X/\langle \gamma \rangle , (f_1 \cdots f_c) \mathfrak{b}  \mathcal{O}_{X/\langle \gamma \rangle} \bigr). 
\end{align*}
Here, when applying Theorem \ref{thm:PIA_nonlin} for the second equality, we replace $\langle \gamma \rangle$ with its stabilizer subgroup at $x$ if necessary, because the theorem assumes that the point $x$ is fixed by the group action. 
Lemma \ref{lem:vf_properties}(1) is also used to ensure that for each $\gamma \in S_x$, the action of every element in $\langle \gamma \rangle$ is also not virtually free at $x$, which justifies the application of Theorem \ref{thm:PIA_nonlin} to the quotient variety $X/\langle \gamma \rangle$. 

In particular, we have 
\[
\operatorname{mld}_{x'}(Y', \mathfrak{a})
= \min _{\gamma \in S} \operatorname{mld}_{x' _{\gamma}} \bigl(X/\langle \gamma \rangle , (f_1 \cdots f_c) \mathfrak{b}  \mathcal{O}_{X/\langle \gamma \rangle} \bigr), 
\]
where $S := \bigcup _{x \in q^{-1}(x')} S_x$, and $q \colon Y \to Y'$ is the quotient morphism. 
Note that by definition, $\gamma \in S_x$ if and only if $x \in Z_{\gamma}$.
Therefore, $\gamma \in S$ if and only if $x' \in Z' _{\gamma}$. 
Thus, we obtain
\begin{align*}
\operatorname{mld}_{x'}(Y', \mathfrak{a}) 
&= \min _{\gamma \in S} \operatorname{mld}_{x' _{\gamma}} \bigl(X/\langle \gamma \rangle , (f_1 \cdots f_c) \mathfrak{b}  \mathcal{O}_{X/\langle \gamma \rangle} \bigr) \\
&= \min _{\substack{\gamma \in G; \\ x' \in Z'_{\gamma}}} \operatorname{mld}_{x' _{\gamma}} \bigl(X/\langle \gamma \rangle , (f_1 \cdots f_c) \mathfrak{b}  \mathcal{O}_{X/\langle \gamma \rangle} \bigr) \\
&= \min_{\gamma \in G} g_\gamma(x'). 
\end{align*}
Since each $g_{\gamma}$ is lower semi-continuous, their finite minimum $x' \mapsto \operatorname{mld}_{x'}(Y', \mathfrak{a})$ is also lower semi-continuous on $Y'$. 
This completes the proof. 
\end{proof}

\begin{rmk}
Theorem \ref{thm:LSC} can be extended to the more general setting where the defining equations $f_1, \ldots, f_c$ of $Y$ in $X$ are $G$-semi-invariant, as treated in \cite{NS26}*{Theorem 7.4}. 
\end{rmk}


\begin{bibdiv}
\begin{biblist*}

\bib{92}{collection}{
   title={Flips and abundance for algebraic threefolds},
   note={Papers from the Second Summer Seminar on Algebraic Geometry held at
   the University of Utah, Salt Lake City, Utah, August 1991;
   Ast\'{e}risque No. 211 (1992) (1992)},
   publisher={Soci\'{e}t\'{e} Math\'{e}matique de France, Paris},
   date={1992}, 
}

\bib{Amb99}{article}{
   author={Ambro, Florin},
   title={On minimal log discrepancies},
   journal={Math. Res. Lett.},
   volume={6},
   date={1999},
   number={5-6},
   pages={573--580},
}


\bib{CLNS}{book}{
   author={Chambert-Loir, Antoine},
   author={Nicaise, Johannes},
   author={Sebag, Julien},
   title={Motivic integration},
   series={Progress in Mathematics},
   volume={325},
   publisher={Birkh\"{a}user/Springer, New York},
   date={2018},
}


\bib{DL02}{article}{
   author={Denef, Jan},
   author={Loeser, Fran\c{c}ois},
   title={Motivic integration, quotient singularities and the McKay
   correspondence},
   journal={Compositio Math.},
   volume={131},
   date={2002},
   number={3},
   pages={267--290},
}


\bib{EM04}{article}{
   author={Ein, Lawrence},
   author={Musta{\c{t}}{\v{a}}, Mircea},
   title={Inversion of adjunction for local complete intersection varieties},
   journal={Amer. J. Math.},
   volume={126},
   date={2004},
   number={6},
   pages={1355--1365},
}

\bib{EM09}{article}{
   author={Ein, Lawrence},
   author={Musta\c{t}\u{a}, Mircea},
   title={Jet schemes and singularities},
   conference={
      title={Algebraic geometry---Seattle 2005. Part 2},
   },
   book={
      series={Proc. Sympos. Pure Math.},
      volume={80},
      publisher={Amer. Math. Soc., Providence, RI},
   },
   date={2009},
   pages={505--546},
}

\bib{EMY03}{article}{
   author={Ein, Lawrence},
   author={Musta{\c{t}}{\u{a}}, Mircea},
   author={Yasuda, Takehiko},
   title={Jet schemes, log discrepancies and inversion of adjunction},
   journal={Invent. Math.},
   volume={153},
   date={2003},
   number={3},
   pages={519--535},
}




\bib{GHS03}{article}{
   author={Graber, Tom},
   author={Harris, Joe},
   author={Starr, Jason},
   title={Families of rationally connected varieties},
   journal={J. Amer. Math. Soc.},
   volume={16},
   date={2003},
   number={1},
   pages={57--67},
}

\bib{HM07}{article}{
   author={Hacon, Christopher D.},
   author={Mckernan, James},
   title={On Shokurov's rational connectedness conjecture},
   journal={Duke Math. J.},
   volume={138},
   date={2007},
   number={1},
   pages={119--136},
}


\bib{KM98}{book}{
   author={Koll{\'a}r, J{\'a}nos},
   author={Mori, Shigefumi},
   title={Birational geometry of algebraic varieties},
   series={Cambridge Tracts in Mathematics},
   volume={134},
   publisher={Cambridge University Press, Cambridge},
   date={1998},
}



\bib{Nak16}{article}{
   author={Nakamura, Yusuke},
   title={On semi-continuity problems for minimal log discrepancies},
   journal={J. Reine Angew. Math.},
   volume={711},
   date={2016},
   pages={167--187},
}

\bib{NS22}{article}{
   author={Nakamura, Yusuke},
   author={Shibata, Kohsuke},
   title={Inversion of adjunction for quotient singularities},
   journal={Algebr. Geom.},
   volume={9},
   date={2022},
   number={2},
   pages={214--251},
}

\bib{NS25}{article}{
   author={Nakamura, Yusuke},
   author={Shibata, Kohsuke},
   title={Inversion of adjunction for quotient singularities II: non-linear
   actions},
   journal={Algebr. Geom.},
   volume={12},
   date={2025},
   number={4},
   pages={443--496},
}

\bib{NS26}{article}{
   author={Nakamura, Yusuke},
   author={Shibata, Kohsuke},
   title={Inversion of adjunction for quotient singularities III:
   semi-invariant case},
   journal={J. Lond. Math. Soc. (2)},
   volume={113},
   date={2026},
   number={5},
   pages={Paper No. e70545},
%   issn={0024-6107},
%   review={\MR{5067336}},
%   doi={10.1112/jlms.70545},
}

\bib{NS4}{article}{
   author={Nakamura, Yusuke},
   author={Shibata, Kohsuke},
   title={A counterexample to the PIA conjecture for minimal log discrepancies},
   journal={to appear in Duke Math. J.},
   eprint={arXiv:2404.06164v3}
}



\bib{Seb04}{article}{
   author={Sebag, Julien},
   title={Int\'{e}gration motivique sur les sch\'{e}mas formels},
   language={French, with English and French summaries},
   journal={Bull. Soc. Math. France},
   volume={132},
   date={2004},
   number={1},
   pages={1--54},
}

\bib{Sho04}{article}{
   author={Shokurov, V. V.},
   title={Letters of a bi-rationalist. V. Minimal log discrepancies and
   termination of log flips},
   journal={Tr. Mat. Inst. Steklova},
   volume={246},
   date={2004},
   number={Algebr. Geom. Metody, Svyazi i Prilozh.},
   pages={328--351},
   translation={
      journal={Proc. Steklov Inst. Math.},
      date={2004},
      number={3 (246)},
      pages={315--336},
   },
}



\bib{Tak03}{article}{
   author={Takayama, Shigeharu},
   title={Local simple connectedness of resolutions of log-terminal
   singularities},
   journal={Internat. J. Math.},
   volume={14},
   date={2003},
   number={8},
   pages={825--836},
   doi={10.1142/S0129167X0300196X},
}

\bib{Yas16}{article}{
   author={Yasuda, Takehiko},
   title={Wilder McKay correspondences},
   journal={Nagoya Math. J.},
   volume={221},
   date={2016},
   number={1},
   pages={111--164},
%   issn={0027-7630},
%   review={\MR{3508745}},
%   doi={10.1017/nmj.2016.3},
}

\end{biblist*}
\end{bibdiv}
\end{document}